\documentclass[11pt,a4paper,reqno]{amsart}
\usepackage{amsmath,amsfonts,amssymb,fullpage,tabls,graphicx,color}
\usepackage{pdfsync}
\usepackage{ifpdf}
\ifpdf
\DeclareGraphicsRule{*}{mps}{*}{}
\fi

%
%
\usepackage{textcomp}
\usepackage{
}

\usepackage{mparhack}
\usepackage[normalem]{ulem}
\usepackage{cancel} 

\usepackage{color}
\definecolor{LightYellow}{cmyk}{0,0,0.4,0}
\definecolor{DarkGreen}{rgb}{0,0.5,0}





\newtheorem{teo}{Theorem}[section]
\newtheorem{prop}[teo]{Proposition}

\newtheorem{lemma}[teo]{Lemma}

\newtheorem{example}[teo]{Example}

\newtheorem{oss}[teo]{Remark}

\newcommand{\de}{\partial}
\newcommand{\om}{\omega}
\newcommand{\Om}{\Omega}

\newcommand{\e}{\varepsilon}

\newcommand{\g}{\gamma}
\newcommand{\gp}{\dot{\gamma}}

\newcommand{\strip}{{\mathcal S}}

\renewcommand{\bowtie}{\mathcal W}
\newcommand{\pino}{\mathcal P}

\newcommand{\mdiv}{\mathop{\mathrm{div}}}

\newcommand{\difsim}{\Delta\,}

\DeclareMathOperator*{\dist}{dist}

\DeclareMathOperator*{\Lip}{Lip}
\DeclareMathOperator*{\reach}{\mathcal R}

\newcommand{\R}{{\mathbb R}}

\newcommand{\N}{{\mathbb N}}

\newcommand{\cJ}{{\mathcal J}}

\newcommand{\Hau}{{\mathcal H}}

\newcommand{\Int}{\mathop{Int}}

\newcommand{\impl}{\mathop{\Rightarrow}}

\newcommand{\ds}{\displaystyle}


\setlength\arraylinesep{3pt}


\title{An overview on the Cheeger problem}

\author[G.P.~Leonardi]{Gian Paolo Leonardi}
\address{Dipartimento di Scienze Fisiche, Informatiche e Matematiche, Universit{\`a} di Modena e Reggio Emilia, Via Campi 213/b, 41100
    Modena, Italy}
\email{gianpaolo.leonardi@unimore.it}

\keywords{Cheeger sets, prescribed mean curvature, isoperimetric}
\subjclass{
49Q10
, 53A10
, 35P15
}

\date{\today}

\begin{document}
\maketitle

\section{Introduction}

Let us fix a bounded open set $\Om\subset \R^{n}$, with $n\geq 2$. Given a Borel set $F\subset \R^{n}$ we denote by $|F|$ its Lebesgue measure (from now on, the \textit{volume} of $F$) and by $P(F)$ its \textit{perimeter} (see section \ref{section:tre} for the definition of the perimeter functional). Then we define the \textit{Cheeger constant of $\Om$} as
\begin{equation}\label{Cheegerconst}
  h(\Om) := \inf \left\{\frac{P(F)}{|F|}\,:\ F\subset \Om,\ |F|>0\right\}\,.
\end{equation}
Any set $E\subset \Om$ such that $\frac{P(E)}{|E|} = h(\Om)$ is called a \textit{Cheeger set} of $\Om$. 
We shall generically refer to the \textit{Cheeger problem}, as far as the computation or estimation of $h(\Om)$, or the characterization of Cheeger sets of $\Om$, are concerned. As we will see later on, the Cheeger problem is deeply connected to other variational problems, ranging from eigenvalue estimates to capillarity models, and even to image segmentation techniques.

The purpose of this note is twofold. First, in order to provide some motivations to the reader, we shall briefly review three relevant problems that show a close connection to the Cheeger problem. Second, after some essential definitions and basic results we give an account of known facts  about the Cheeger problem, as well as of some more recent results obtained by A. Pratelli and the author in \cite{LeoPra2014}. Some key examples are presented in the final section. We also address the interested reader to \cite{ButCarCom2007,CarComPey2009,CasChaMolNov2008,CasFacMei2009,CasMirNov2010,IonLac2005,Grieser2006,Strang2010}, where further applications, developments and extensions of the Cheeger problem are considered.

\section{Some motivations}In this section we synthetically describe three variational problems that are closely connected with the Cheeger problem. 

\subsection{Estimating the smallest eigenvalue of the Laplacian}
The historical motivation of the Cheeger problem is an isoperimetric-type inequality that was first proved by J. Cheeger in \cite{Cheeger1970} in the context of compact, $n$-dimensional Riemannian manifolds without boundary. As a consequence, one obtains the validity of a Poincar\'e inequality with optimal constant uniformly bounded from below by a geometric constant. Let $\lambda_{2}(M)$ be the least non-zero eigenvalue of the Laplace-Beltrami operator on $M$, then Cheeger proved that
\begin{equation}\label{CheegerM}
\lambda_{2}(M) \geq \inf_{A\subset\subset M}\ \frac{P(A)^{2}}{4\min\{V(A),V(M\setminus A)\}^{2}}\,,
\end{equation}
where $V(A)$ and $P(A)$ denote, respectively, the Riemannian volume and perimeter of $A$. 
Here we skip the discussion of the problem on Riemannian manifolds and consider the analogous problem for the $p$-Laplacian ($1\leq p <\infty$) with Dirichlet boundary conditions, with $M$ replaced by a bounded open set $\Om\subset \R^{n}$. To be more specific, we assume that $\Om$ coincides with its essential interior, i.e., that it contains all points $x\in\R^{n}$ for which there exists $r>0$ such that $|B(x,r)\setminus \Om| = 0$. Under this assumption, all (slightly) different definitions of the Cheeger constant, that have been proposed or considered in previous works, actually agree. 
We thus exclude from our analysis domains (like, for instance, a planar open disc minus a diameter) which from the point of view of the Lebesgue measure (and of the perimeter) are not distinguishable from their essential interiors. By approximation (see Theorem \ref{teo:hcontinua}) it will then be possible to deduce estimates that are valid for more general domains (and for the more ``classical'' definition of Cheeger constant, i.e. the minimization of the ratio $\frac{P(F)}{|F|}$ among relatively compact subdomains $F\subset\subset \Om$).

Let $\lambda_{p}(\Om)$ denote the smallest ``eigenvalue'' of the $p$-Laplacian with Dirichlet boundary conditions, for $1\leq p<\infty$:
\[
\lambda_{p}(\Om) := \inf_{u\in W^{1,p}_{0}(\Om)}\frac{\|\nabla u\|_{p}^{p}}{\|u\|_{p}^{p}}\,.
\]
Arguing as in Cheeger's paper (see \cite{LefWei1997,KawFri2003}) one can easily show that   
\begin{equation}\label{Cheegerp}
\lambda_{p}(\Om) \geq \frac{h(\Om)^{p}}{p^{p}}\,,
\end{equation}
where $h(\Om)$ is defined in \eqref{Cheegerconst}. 
The proof of \eqref{Cheegerp} goes as follows: take $u\in W^{1,p}_{0}(\Om)$ with a positive Sobolev norm, and set $q = \frac{p}{p-1}$. By noting that $p/q = p-1$ and thanks to H\"older's inequality, one finds
\begin{align}\label{stimaRayleigh}
\frac{\ds\int|\nabla u|^{p}}{\ds\int |u|^{p}} &\geq \frac{\ds\left(\int |u|^{p-1}|\nabla u|\right)^{p}}{\ds\left(\int |u|^{p}\right)^{p}} = \frac{\ds\left(\int |\nabla |u|^{p}|\right)^{p}}{p^{p}\ds\left(\int |u|^{p}\right)^{p}}\,.
\end{align}
Setting $f = |u|^{p}$, by coarea formula (see \cite{AmbFusPal2000}) one gets
\begin{align}
\label{coareapplic}
\nonumber\int |\nabla f| &= \int_{0}^{+\infty} \frac{P(\{f>t\})}{|\{f>t\}|}\cdot |\{f>t\}|\, dt \geq h(\Om) \cdot \int_{0}^{+\infty}|\{f>t\}|\, dt\\ 
&= h(\Om)\cdot \int f\,,
\end{align}
then by \eqref{coareapplic} one deduces that
\[
\frac{\ds\left(\int |\nabla |u|^{p}|\right)}{\ds\left(\int |u|^{p}\right)} = \frac{\ds\int |\nabla f| }{\ds\int f} \geq h(\Om)\,.
\]
Therefore, \eqref{Cheegerp} follows from this last inequality combined with \eqref{stimaRayleigh}. 

\begin{oss}\rm
We note that, as $p\to 1$, the left-hand side of \eqref{Cheegerp} tends to $\lambda_{1}(\Om)$ while the right-hand side tends to $h(\Om)$. Moreover, we have
\begin{equation}\label{lamacca}
\lambda_{1}(\Om) = h(\Om)\,,
\end{equation}
which means that \eqref{Cheegerp} becomes sharp as $p\to 1$. 
Proving \eqref{lamacca} amounts to show that $\lambda_{1}(\Om) \leq h(\Om)$, as the other inequality directly follows from \eqref{Cheegerp}. To this aim, one can exploit \eqref{coareapplic} on a function $f\in W^{1,1}_{0}(\Om)$ that suitably approximates the characteristic function of a set of finite perimeter $F\subset \Om$, for which $\frac{P(F)}{|F|} \simeq h(\Om)$. To this aim, it is not restrictive to assume that $F$ is relatively compact in $\Om$ and that $\de F$ is smooth, hence $f$ can be defined as a standard regularization of $\chi_{F}$, in such a way that $0\leq f \leq 1$, $|F| \simeq \ds\int f$ and $P(F) \simeq \ds\int|\nabla f|$. In conclusion one obtains
\[
\int|\nabla f| \simeq \frac{P(F)}{|F|} \int f \simeq h(\Om) \int f\,,
\]
which implies \eqref{lamacca}.
\end{oss}

\subsection{Existence of graphs with prescribed mean curvature}
Let $\Om\subset \R^{n}$ be a bounded open domain with Lipschitz boundary. The prescribed mean curvature equation is a nonlinear elliptic partial differential equation of the form
\begin{equation}\label{PMC}
\mdiv \left(\frac{\nabla u}{\sqrt{1+|\nabla u|^{2}}}\right) = H(x)\,,
\end{equation}
where $H$ is a given function on $\Om$. For the moment we do not specify any further property of $H$ and $u$. It is well-known that the left-hand side of \eqref{PMC} represents the scalar mean curvature of the graph $t=u(x)$, up to a division by $n-1$. The prescribed mean curvature equation arises as the Euler-Lagrange equation of the functional
\begin{align}\label{functionalPMC}
\cJ[u] = \int_{\Om} \sqrt{1+|\nabla u(x)|^{2}}\, dx + & \int_{\Om} H(x) u(x)\, dx \\\nonumber & + \int_{\de \Om} |u(y)-\varphi(y)|\, d\Hau^{n-1}(y)
\end{align}
where $\Hau^{n-1}$ denotes the Hausdorff $(n-1)$-dimensional measure in $\R^{n}$ and $\varphi\in L^{1}(\de\Om)$ is a boundary datum. The minimization of \eqref{functionalPMC} corresponds to the physical problem of finding the stable equilibrium configurations for a fluid-gas interface in a cylindrical tube of cross-section $\Om$, subject to surface tension, bulk forces, and boundary conditions. In \cite{Giusti1978} (see also \cite{Giaquinta1974}) some conditions for the existence and uniqueness of solutions to \eqref{PMC} (even without specifying boundary conditions) are found. In particular, we have the following result:
\begin{teo}[\cite{Giusti1978}]\label{teo:Giusti}
Let $\Om$ be a bounded Lipschitz domain, and let $H\in \Lip(\Om)$. Then, the equation \eqref{PMC} admits at least a solution $u\in C^{2}(\Om)$ if and only if
\begin{equation}\label{NCGiusti}
\left|\int_{A} H(x)\, dx\right| < P(A)
\end{equation}
for all $A\subset \Om$ with $0<|A|<|\Om|$. If, in addition, $\left|\int_{\Om} H(x)\right| = P(\Om)$, then the solution $u$ to \eqref{PMC} is \textit{unique} up to additive constants and has a ``vertical contact'' at $\de \Om$.
\end{teo}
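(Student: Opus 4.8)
The plan is to recast the prescribed mean curvature equation \eqref{PMC} as the Euler--Lagrange equation of the functional $\cJ$ in \eqref{functionalPMC} and to attack the existence question via the direct method in $BV(\Om)$, following the standard framework for the capillarity/area functional. First I would introduce the relaxed functional on $BV(\Om)$,
\[
\cJ[u] = \int_{\Om}\sqrt{1+|Du|^{2}} + \int_{\Om} H u\,dx + \int_{\de\Om}|u-\varphi|\,d\Hau^{n-1},
\]
interpreting the area term in the usual way for $BV$ functions (as the mass of the vector measure $(Du,\Leb^{n})$) and recalling that the trace operator $BV(\Om)\to L^{1}(\de\Om)$ is well defined and continuous for Lipschitz $\Om$. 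The key analytic point is a coercivity/compactness estimate: using the divergence theorem together with the hypothesis \eqref{NCGiusti}, one shows that the linear term $\int_{\Om}Hu$ can be absorbed into the area term up to a controlled loss, so that sublevel sets of $\cJ$ are bounded in $BV(\Om)$; lower semicontinuity of each of the three terms with respect to $L^{1}_{loc}$ convergence then yields a minimizer $u\in BV(\Om)$.

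The heart of the matter, and the step I expect to be the main obstacle, is promoting this $BV$ minimizer to a genuine $C^{2}(\Om)$ solution of \eqref{PMC}, i.e. establishing interior regularity. This is where the strict inequality in \eqref{NCGiusti} (as opposed to $\leq$) is essential: the condition $\left|\int_{A}H\right|<P(A)$ for all proper subsets $A$ guarantees, via a comparison argument with the subgraph $\{(x,t):t<u(x)\}\subset\Om\times\R$ viewed as a set of finite perimeter minimizing a prescribed-mean-curvature type functional, that the subgraph has no vertical parts in the interior — equivalently, that $u$ is locally bounded. One argues by contradiction: if $u$ were to blow up on a set of positive measure or develop a vertical portion, slicing the subgraph at large heights would produce a set $A\subset\Om$ violating \eqref{NCGiusti}. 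Once local boundedness is known, the De Giorgi--Nash--Moser and Schauder machinery for the (uniformly elliptic, once $|\nabla u|$ is controlled) minimal-surface-type operator, together with the Lipschitz regularity of $H$, upgrades $u$ to $C^{2}(\Om)$; this is the part one would cite from \cite{Giusti1978} and the standard references on the nonparametric mean curvature equation rather than reprove.

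For the converse direction, I would assume a solution $u\in C^{2}(\Om)$ exists and derive \eqref{NCGiusti}: for any $A\subset\Om$ with $0<|A|<|\Om|$, integrate \eqref{PMC} over $A$ and apply the divergence theorem to the vector field $Tu:=\nabla u/\sqrt{1+|\nabla u|^{2}}$, obtaining $\int_{A}H = \int_{\de^{*}A} Tu\cdot\nu_{A}\,d\Hau^{n-1}$; since $|Tu|<1$ pointwise (here the strictness comes from $|\nabla u|<\infty$, which holds because $u\in C^{2}$ makes $\nabla u$ finite on all of $\Om$), one gets $\left|\int_{A}H\right|\le\int_{\de^{*}A}|Tu|\,d\Hau^{n-1}<P(A)$, with the inequality strict because $A$ is a proper subset so $\Hau^{n-1}(\de^{*}A\cap\Om)>0$ on a set where $|Tu|<1$. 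Finally, for the uniqueness-and-vertical-contact addendum, the borderline case $\left|\int_{\Om}H\right|=P(\Om)$ forces $|Tu|\to1$ in the trace sense on $\de\Om$, which is precisely the vertical contact condition; uniqueness up to constants then follows from the strict convexity of the area integrand in $\nabla u$ combined with the fact that two solutions differing on $\de\Om$ would contradict the equality case, and I would again defer the details to \cite{Giusti1978}.
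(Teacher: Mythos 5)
Your outline of the ``only if'' direction and of the subcritical existence argument is consistent with what the paper (and \cite{Giusti1978}) have in mind, but there is a genuine gap in the existence part: your coercivity claim for $\cJ$ is false precisely in the critical case that the theorem is designed to include. Hypothesis \eqref{NCGiusti} is imposed only on proper subsets $A$ with $0<|A|<|\Om|$, so the equality $\left|\int_{\Om}H\right| = P(\Om)$ is allowed --- indeed it is the extremal case addressed by the last sentence of the statement, and the one most relevant to the Cheeger discussion (constant $H=h(\Om)$ on a self-Cheeger domain). In that case sublevel sets of \eqref{functionalPMC} are \emph{not} bounded in $BV(\Om)$: replacing $u$ by $u-c$ leaves the area term unchanged, while the linear term and the boundary term together change by roughly $c\left(P(\Om)-\left|\int_{\Om}H\right|\right)+O(1)=O(1)$, so minimizing sequences may drift vertically, the infimum need not be attained, and the boundary datum is lost in the limit (this loss is exactly the ``vertical contact''). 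Hence the direct method applied to $\cJ$ only covers $\left|\int_{\Om}H\right|<P(\Om)$; the critical case requires the generalized solutions of Miranda \cite{Miranda1964} and a limiting argument, which is exactly the dichotomy the paper emphasizes in its discussion after the statement. Even in the subcritical case, the strict pointwise inequalities \eqref{NCGiusti} do not directly give coercivity: one must first upgrade them, via a compactness argument on sets of finite perimeter (a lemma in \cite{Giusti1978}), to a uniform bound $\left|\int_{A}H\right|\le (1-\e_{0})P(A)$; your phrase ``absorbed up to a controlled loss'' conceals this step. Likewise, uniqueness up to constants in the extremal case does not follow from strict convexity of the area integrand alone, since the relevant functional is translation-degenerate there; that, too, belongs to the finer analysis you defer to \cite{Giusti1978}.

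The necessity direction is essentially correct and is what the paper calls a straightforward application of the divergence theorem, with two small caveats: the Gauss--Green formula on an arbitrary set of finite perimeter $A$ (possibly touching $\de\Om$) must be justified for the bounded vector field $\nabla u/\sqrt{1+|\nabla u|^{2}}$, e.g.\ through the normal trace of a bounded divergence-measure field or by exhausting $A$ from inside; and the strictness should be argued by noting that $\Hau^{n-1}(\de^{*}A\cap\Om)>0$ whenever $0<|A|<|\Om|$ (relative isoperimetric inequality), on which portion the field has modulus strictly less than $1$ at every point, while on $\de^{*}A\cap\de\Om$ one can only say the normal trace is bounded by $1$.
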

The proof of Theorem \ref{teo:Giusti} uses a straightforward application of the divergence theorem for the ``only if'' part, while becomes more technical in the ``if'' part. The case $|\int_{\Om}H(x)| < P(\Om)$ is easier and can be handled by showing the existence of smooth minimizers of the functional $\cJ[u]$ defined in \eqref{functionalPMC}. The critical case $|\int_{\Om}H(x)| = P(\Om)$ is more subtle and requires the notion of generalized solution of \eqref{PMC} in the sense of Miranda \cite{Miranda1964}. We refer to \cite{Giusti1978} for more details.

In order to better exploit the link between the existence of solutions to \eqref{PMC} and the Cheeger problem, we focus on the case $H(x) = H$ constant. We also assume without loss of generality that $H\geq 0$. Then, Theorem \ref{teo:Giusti} implies that a solution $u\in C^{2}(\Om)$ to the constant mean curvature equation
\begin{equation}\label{CMC}
\mdiv \left(\frac{\nabla u(x)}{\sqrt{1+|\nabla u(x)|^{2}}}\right) = H
\end{equation}
exists if and only if $H\leq h(\Om)$ and no proper subset $A$ of $\Om$ is Cheeger in $\Om$. In this sense, the Cheeger constant provides a threshold for the prescribed mean curvature, in order that a solution to \eqref{CMC} may exist. A particularly interesting situation occurs in the limit case $H = h(\Om)$ and when $\Om$ is \textit{uniquely self-Cheeger}, in the sense that $\Om$ is Cheeger in itself and no other proper subset of $\Om$ is Cheeger in $\Om$. Indeed, in this case one gains not only existence but also \textit{uniqueness} (up to a vertical translation) of the solution to \eqref{CMC}. This much more rigid situation corresponds to the case of a graph with constant mean curvature $H=h(\Om)$, that meets the boundary of the cylinder $\Om\times \R$ in a tangential way (thus, the gradient $\nabla u(x)$ blows up as $x$ tends to $\de\Om$) and whose geometrical shape is, therefore, uniquely determined up to a translation. The physical interest for these optimal shapes becomes immediately apparent: indeed, they represent the equilibrium configurations of the capillary free-surfaces formed by perfectly wetting fluids inside a cylindrical container of cross-section $\Om$ under zero gravity conditions. 

\subsection{Stable shapes for Total Variation minimization}
In \cite{RudOshFat1992} (see also the analysis performed in \cite{ChaLio1997}) a variational method, now called \textit{ROF model}, was proposed for the regularization of noisy images. Let $g\in L^{2}(\R^{2})$ be a given \textit{image} to be regularized. The idea is to preserve the essential contours and textures of the objects depicted in the image, while removing noise. To this aim, one can solve the following variational problem:
\begin{equation}\label{ROFproblem}
\min_{u\in L^{2}(\R^{n})\cap BV(\R^{n})}\ \int_{\R^{n}} |Du| + \frac{1}{2\lambda} \int_{\R^{n}}|u-g|^{2}\,,
\end{equation}
where $|Du|$ is the total variation measure associated with the distributional gradient of $u$, and $\lambda$ is a positive parameter. We notice that the functional defined in \eqref{ROFproblem} is strictly convex, and it is not difficult to prove existence (and uniqueness!) of a solution. One could then be tempted to write the following  Euler-Lagrange equation associated with \eqref{ROFproblem}:  
\begin{equation}\label{ROFpde}
\lambda\mdiv \left(\frac{Du}{|Du|}\right) = u-g\,.
\end{equation}
However this is far from being correct, since one expects that some ``staircasing effects'' occurs in the solution, and therefore that its gradient vanishes on regions of positive Lebesgue measure. The correct way of writing the Euler-Lagrange equation can thus be found by means of convex analysis. We recall that the total variation of $Du$ is the convex functional defined by
\[
|Du|(\R^{n}) = \int_{\R^{n}}|Du| := \sup\left\{\int u \mdiv \xi:\ \xi\in C^{1}_{c}(\R^{n};\R^{n}),\ |g|\leq 1\right\}\,.
\]
We shall also set $J[u] = |Du|(\R^{n})$. Being $J[u]$ convex, we can consider its subdifferential at $u\in L^{2}(\R^{n})$:
\[
\de J[u] = \{v\in L^{2}(\R^{n}):\ J[u+w] \geq J[u] + \langle v,w\rangle\ \text{for all }w\in L^{2}(\R^{n})\}\,.
\] 
Then, the Euler-Lagrange relation derived from the minimality of $u$ with respect to problem \eqref{ROFproblem} is $0 \in \de J[u] + \frac{u-g}{\lambda}$ or, equivalently, 
\begin{equation}\label{ROFpdi}
\frac{g-u}{\lambda} \in \de J[u]\,.
\end{equation}
It is possible to show that the subdifferential $\de J[u]$ consists of the divergences of vector fields that ``calibrate'' the distributional gradient $Du$. More precisely, one can rewrite the Euler-Lagrange inclusion \eqref{ROFpdi} in the following, equivalent form: \textit{there exists a vector field $\xi_{u}\in L^{\infty}(\R^{n})$ such that $|\xi_{u}|\leq 1$, $\mdiv \xi_{u} \in L^{2}(\R^{n})$, $Du = \xi_{u} |Du|$ and 
\begin{equation}\label{divxi}
\mdiv \xi_{u} = \frac{u-g}{\lambda}\,.
\end{equation}
}
Let us assume from now on that $g = \chi_{\Om}$ is the characteristic function of some bounded Lipschitz domain $\Om$. The goal is to characterize the domains $\Om$ for which the solution $u$ of \eqref{ROFproblem} with $g = \chi_{\Om}$ is a ``scaled copy of $g$'', i.e. of the form $u = \mu \chi_{\Om}$, with $\mu\geq 0$. This means that the regularization produced by the ROF model \eqref{ROFproblem} determines, in this case, a change of the contrast, but not of the shape of the initial image $g = \chi_{\Om}$.

Following \cite{AltCasCha2005.1}, we say that a Lipschitz domain $\Om$ is \textit{calibrable} if $P(\Om)<\infty$ and if there exists a vector field $\xi\in L^{\infty}(\R^{n};\R^{n})$ such that $|\xi|\leq 1$, $\xi = \nu_{\Om}$ $\Hau^{n-1}$-almost everywhere on $\de\Om$, and 
\[
-\mdiv \xi = \frac{P(\Om)}{|\Om|} \chi_{\Om}
\]
in the distributional sense. This notion of calibrability is already present in the context of existence and uniqueness problems for graphs with prescribed mean curvature (see \cite{Giusti1978}). By using \eqref{divxi} one can derive the following result (see \cite{AltCasCha2005.1, AltCasCha2005}):
\begin{teo}The function $u_{\lambda} = \left(1 - \frac{P(\Om)}{|\Om|}\lambda\right)^{+}\chi_{\Om}$ is the unique minimizer of \eqref{ROFproblem} with $g = \chi_{\Om}$ if and only if $\Om$ is calibrable.
\end{teo}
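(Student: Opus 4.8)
The plan is to read off the equivalence directly from the characterization of minimizers of \eqref{ROFproblem} recalled above. That functional has a unique minimizer $u$, and $u$ is this minimizer precisely when the Euler--Lagrange inclusion \eqref{ROFpdi} holds, which in turn was rewritten as the existence of a field $\xi_{u}\in L^{\infty}(\R^{n};\R^{n})$ with $|\xi_{u}|\leq1$, $\mdiv\xi_{u}\in L^{2}(\R^{n})$, $Du=\xi_{u}\,|Du|$ and $\mdiv\xi_{u}=\frac{u-g}{\lambda}$ (see \eqref{divxi}). So it suffices to test this against the candidate $u_{\lambda}=c\,\chi_{\Om}$, $c:=\left(1-\frac{P(\Om)}{|\Om|}\lambda\right)^{+}$, bearing in mind the elementary identity
\[
\frac{u_{\lambda}-g}{\lambda}=\frac{c-1}{\lambda}\,\chi_{\Om}=-\kappa\,\chi_{\Om},\qquad \kappa:=\min\Bigl\{\tfrac1\lambda,\tfrac{P(\Om)}{|\Om|}\Bigr\}\in\Bigl(0,\tfrac{P(\Om)}{|\Om|}\Bigr],
\]
where $\kappa=\frac{P(\Om)}{|\Om|}$ (equivalently $c>0$) exactly in the sub-critical range $0<\lambda<\frac{|\Om|}{P(\Om)}$.

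For the ``if'' part I would assume $\Om$ calibrable with calibration $\xi$, and set $\xi_{u}:=\frac{\kappa|\Om|}{P(\Om)}\,\xi$. Then $|\xi_{u}|\leq\frac{\kappa|\Om|}{P(\Om)}\leq1$ and $\mdiv\xi_{u}=-\kappa\,\chi_{\Om}=\frac{u_{\lambda}-g}{\lambda}\in L^{2}(\R^{n})$. When $c>0$ the scaling factor equals $1$, so $\xi_{u}=\xi=\nu_{\Om}$ on $\de\Om$ and hence $Du_{\lambda}=c\,D\chi_{\Om}=\xi_{u}\,|Du_{\lambda}|$; when $c=0$ one has $u_{\lambda}=0$ and that last requirement is void. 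In either case $u_{\lambda}$ satisfies \eqref{divxi}, hence \eqref{ROFpdi}, so it is the unique minimizer of \eqref{ROFproblem}.

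For the ``only if'' part I would fix a $\lambda$ in the sub-critical range, so the minimizer $u_{\lambda}=c\,\chi_{\Om}$ has $c>0$; the characterization then produces $\xi_{u}$ with $|\xi_{u}|\leq1$, $\mdiv\xi_{u}\in L^{2}(\R^{n})$, $-\mdiv\xi_{u}=\frac{P(\Om)}{|\Om|}\chi_{\Om}$ and $Du_{\lambda}=\xi_{u}\,|Du_{\lambda}|$. Since $Du_{\lambda}=c\,D\chi_{\Om}$ with $c>0$, the pairing identity forces the normal trace of $\xi_{u}$ along $\de^{*}\Om$ to coincide with the polar vector of $D\chi_{\Om}$, i.e.\ $\xi_{u}=\nu_{\Om}$ $\Hau^{n-1}$-a.e.\ on $\de^{*}\Om$ and therefore on $\de\Om$, since $\Hau^{n-1}(\de\Om\setminus\de^{*}\Om)=0$ for a Lipschitz domain. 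Thus $\xi_{u}$ is a calibration of $\Om$.

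The step that calls for real care is this last one: making sense of $Du_{\lambda}=\xi_{u}\,|Du_{\lambda}|$ when $\xi_{u}$ is only an $L^{\infty}$ field while $|Du_{\lambda}|$ is carried by $\de\Om$ needs the Anzellotti pairing together with the normal trace of $\xi_{u}$ on $\de^{*}\Om$ (available because $\mdiv\xi_{u}\in L^{2}$), and also the fact --- recalled just before the statement --- that $\de J[u]$ is exactly the set of divergences of fields calibrating $Du$; I would quote all of this from \cite{AltCasCha2005.1,AltCasCha2005} rather than reprove it. Everything else is bookkeeping: one rescaling of a vector field, a careful tracking of sign conventions for $D\chi_{\Om}$ and $\nu_{\Om}$, and strict convexity of the ROF functional.
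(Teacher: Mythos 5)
Your argument is correct and follows essentially the same route as the paper: you verify the candidate $u_{\lambda}$ against the Euler--Lagrange inclusion \eqref{ROFpdi} in its rewritten form \eqref{divxi}, obtaining the equivalence with calibrability in the sub-critical range $\lambda<\frac{|\Om|}{P(\Om)}$, and in the super-critical range you use the same rescaled field $\frac{|\Om|}{\lambda P(\Om)}\xi$ as the paper does. Your unified constant $\kappa=\min\{\tfrac1\lambda,\tfrac{P(\Om)}{|\Om|}\}$ and the explicit remarks on the Anzellotti pairing and on restricting the ``only if'' direction to sub-critical $\lambda$ are just slightly more detailed bookkeeping of the same proof.
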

\begin{proof}
First we consider the case $\lambda < \frac{|\Om|}{P(\Om)}$, so that
\[
u_{\lambda} = \left(1 - \frac{P(\Om)}{|\Om|}\lambda\right)\chi_{\Om}\,.
\]
Then we can easily check that $u_{\lambda}$ is the unique minimizer of \eqref{ROFproblem} if and only if there exists a vector field $\xi\in K$ satisfying $D\chi_{\Om} = \xi |D\chi_{\Om}|$ and such that \eqref{divxi} holds for $g = \chi_{\Om}$, which means that 
\[
-\mdiv \xi = \frac{P(\Om)}{|\Om|}\chi_{\Om}\,,
\]
that is, $\Om$ is calibrable. Concerning the case $\lambda \ge \frac{|\Om|}{P(\Om)}$, we observe that \eqref{divxi} is satisfied when $u=0$, $g = \chi_{\Om}$ and 
\[
\xi_{u} = \xi_{0} = \frac{|\Om|}{\lambda P(\Om)}\xi\,,
\]
where $\xi$ denotes a calibrating vector field for $\Om$. Note that $|\xi_{0}|\le 1$ in this case, thus $\mdiv \xi_{0}\in \de J[0]$.
\end{proof}
One can appreciate the close connection between ROF minimization and the Cheeger problem, through this notion of calibrability. To clarify this point, let us first recall the notion of \textit{mean-convexity}. We say that an open set $\Om\subset\R^{n}$ with finite perimeter is \text{mean-convex} if for any Borel set $F\subset\R^{n}$ such that $\Om\subset F$ we have $P(\Om)\leq P(F)$. In other words, $\Om$ minimizes the perimeter with respect to outer variations. Since the orthogonal projection onto a convex set is a $1$-Lipschitz map, by the area formula one can easily infer that (bounded) convex sets are also mean-convex (the converse being not true in general). 
The following proposition holds.
\begin{prop}\label{prop:calibrable}
Let $\Om$ be a Lipschitz domain. 
\begin{itemize}
\item[(i)] if $\Om$ is calibrable, then it is also mean-convex and self-Cheeger;

\item[(ii)] if $\Om$ is convex and self-Cheeger, then it is calibrable.
\end{itemize}
\end{prop}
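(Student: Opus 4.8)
The plan is to obtain part~(i) directly from the generalized Gauss--Green (divergence) theorem applied to a calibrating field, and to obtain part~(ii) by manufacturing a calibrating field for a convex self-Cheeger domain, most naturally by reading it off a generalized solution of the constant mean curvature equation~\eqref{CMC} furnished by the equality case of Theorem~\ref{teo:Giusti}.

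For part~(i), let $\xi$ be a calibrating field for $\Om$: so $|\xi|\le 1$, $\xi=\nu_{\Om}$ $\Hau^{n-1}$-a.e.\ on $\de\Om$, and $\mdiv\xi=-\frac{P(\Om)}{|\Om|}\chi_{\Om}$ distributionally, whence $\mdiv\xi\in L^{\infty}(\R^{n})$ and $\xi$ is a bounded divergence-measure field. Hence $\xi$ pairs with any bounded set $E$ of finite perimeter through a Gauss--Green identity of the form $\int_{E}\mdiv\xi\,dx=\int_{\de^{*}E}(\xi\cdot\nu_{E})\,d\Hau^{n-1}$, where the normal trace $(\xi\cdot\nu_{E})$ is an $L^{\infty}$ function with $|(\xi\cdot\nu_{E})|\le 1$. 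To get mean-convexity, take a Borel $F\supset\Om$ with $P(F)<\infty$: since $\mdiv\xi$ is supported in $\overline\Om$, $\int_{F}\mdiv\xi\,dx=-\frac{P(\Om)}{|\Om|}|\Om|=-P(\Om)$, and the Gauss--Green identity bounds this below by $-P(F)$, so $P(\Om)\le P(F)$. To get self-Cheegerness, take $F\subset\Om$ with $|F|>0$: now $\int_{F}\mdiv\xi\,dx=-\frac{P(\Om)}{|\Om|}|F|\ge -P(F)$, i.e.\ $\tfrac{P(F)}{|F|}\ge\tfrac{P(\Om)}{|\Om|}$; since the choice $F=\Om$ gives the opposite bound for the infimum in~\eqref{Cheegerconst}, we conclude $h(\Om)=\tfrac{P(\Om)}{|\Om|}$, i.e.\ $\Om$ is its own Cheeger set.

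For part~(ii), assume $\Om$ convex and self-Cheeger and put $H:=h(\Om)=\tfrac{P(\Om)}{|\Om|}$. By self-Cheegerness, $H|A|\le P(A)$ for every $A\subset\Om$, and $H|\Om|=P(\Om)$; to run the borderline case of Theorem~\ref{teo:Giusti} one needs the strict version $|\int_{A}H|=H|A|<P(A)$ for $0<|A|<|\Om|$, i.e.\ that no proper subset of $\Om$ is Cheeger. This is precisely where convexity enters: since a convex body possesses a unique Cheeger set, self-Cheegerness forces $\Om$ to be uniquely self-Cheeger, so~\eqref{NCGiusti} holds strictly while $|\int_{\Om}H|=P(\Om)$. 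Theorem~\ref{teo:Giusti} then provides a solution $u\in C^{2}(\Om)$ of~\eqref{CMC} with vertical contact at $\de\Om$; setting $\xi:=-\frac{\nabla u}{\sqrt{1+|\nabla u|^{2}}}$ in $\Om$, one has $-\mdiv\xi=H$ in $\Om$, $|\xi|<1$ inside $\Om$, and $|\xi|\to 1$ with $\xi\to\nu_{\Om}$ at $\de\Om$ by the vertical-contact property (this is exactly the configuration checked directly for a ball, where $\xi(x)=-x/\rho$ on $B_\rho$). Extending $\xi$ outside $\Om$ by a divergence-free unit field with the prescribed normal trace on $\de\Om$ (routine for a Lipschitz domain, and explicit for balls) yields $-\mdiv\xi=\frac{P(\Om)}{|\Om|}\chi_{\Om}$ on all of $\R^{n}$, i.e.\ $\Om$ is calibrable. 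A more PDE-free alternative would be to build $\xi$ by hand in the inner collar $\Om\setminus\Om_{r}$, $r=|\Om|/P(\Om)$, via the nearest-point projection onto $\de\Om$, convexity guaranteeing that the normal foliation is regular and that the divergence constraint can be met while keeping $|\xi|\le 1$.

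The main obstacle is entirely in part~(ii): producing a field whose divergence is exactly the constant $-H$ throughout $\Om$ while its trace on $\de\Om$ is $\nu_{\Om}$ and $|\xi|\le 1$ everywhere. The naive ansatz $\xi=-g(\dist(\cdot,\de\Om))\nabla\dist(\cdot,\de\Om)$ fails because $\Delta\dist(\cdot,\de\Om)$ is not constant, so some genuine use of the geometry of convex sets (the behaviour of inner parallel sets, equivalently the bound on the curvature of $\de\Om$ by $h(\Om)$ in the planar case) is unavoidable; indeed the implication is false without convexity. In the route through Theorem~\ref{teo:Giusti}, this difficulty is repackaged as two nontrivial inputs: the uniqueness of the Cheeger set of a convex body, and the vertical-contact regularity of $u$ in the equality case $|\int_{\Om}H|=P(\Om)$.
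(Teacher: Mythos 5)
Your part (i) is correct and is essentially the paper's own argument: the paper applies the divergence theorem to the calibrating field $\xi$ on smooth relatively compact subdomains $A\subset\subset\Om$, getting $\frac{P(\Om)}{|\Om|}|A|=-\int_{\de A}\xi\cdot\nu_{A}\le P(A)$, and then on $F\setminus\overline{\Om}$ for outer competitors; you replace the approximation by the generalized Gauss--Green formula with normal traces for bounded divergence-measure fields, which lets you test arbitrary sets of finite perimeter directly, and that is fine (slightly cleaner, in fact). One caveat on conventions: if $-\mdiv\xi=\frac{P(\Om)}{|\Om|}\chi_{\Om}$ holds in $\mathcal{D}'(\R^{n})$, then the normal trace of $\xi$ on $\de\Om$ is forced to equal $-1$, i.e.\ $\xi$ agrees with the \emph{inner} normal there, so the equation and the boundary requirement cannot both hold with the signs as literally written; your computations in (i) only use $|\xi|\le 1$ and the equation, so they are unaffected, but in (ii) you must fix one coherent convention (your ball field $-x/\rho$ matches the equation, and its trace is then the inner normal).

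For part (ii) the paper gives no proof at all (it cites \cite{AltCasCha2005.1}), so your attempt goes beyond the text, but it has a genuine gap exactly at the step you declare routine: extending $\xi$ to $\R^{n}\setminus\Om$ as a divergence-free field with $|\xi|\le 1$ and normal trace of modulus $1$ on $\de\Om$. By the very Gauss--Green computation you used in (i), the existence of such an exterior field already forces $P(\Om)\le P(F)$ for every $F\supset\Om$; hence it cannot be ``routine for a Lipschitz domain'' (it fails for every Lipschitz domain that is not mean-convex, as you yourself observe the implication must), and even for convex $\Om$ it is a substantive construction --- the exterior calibration is a real part of the proof in \cite{AltCasCha2005.1}, explicit only in special cases such as balls (e.g.\ $\xi(x)=-x/|x|^{n}$ outside $B_{1}$). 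Two further points need care: (a) ``vertical contact'' must be upgraded to the statement that the trace of $\nabla u/\sqrt{1+|\nabla u|^{2}}$ on $\de\Om$ equals the normal $\Hau^{n-1}$-a.e.; this is indeed available in \cite{Giusti1978}, but it is part of the work, not a formal consequence of $|\nabla u|\to\infty$; and (b) invoking Theorem \ref{teo:convC11unique} to obtain the strict inequality in \eqref{NCGiusti} is legitimate within this survey, but that uniqueness theorem is proved in \cite{CasChaNov2007,AltCas2009} by means of the characterization of convex calibrable sets, i.e.\ the very statement you are reproving, so as a self-contained argument your route is at best a reduction of the hard content to \cite{Giusti1978} plus an exterior-calibration lemma for convex bodies that still has to be established, and at worst circular.
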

The proof of claim (ii) of Proposition \ref{prop:calibrable} can be found in \cite{AltCasCha2005.1}. 
Here we only describe how to prove (i). Let $A\subset\Om$ be a relatively compact subdomain with smooth boundary, then by the divergence theorem applied to the calibrating vector field $\xi$ we get
\[
\frac{P(\Om)}{|\Om|} |A| = -\int_{A} \mdiv \xi = -\int_{\de A} \xi\cdot \nu_{A} \leq P(A)\,,
\]
whence $\frac{P(\Om)}{|\Om|} \leq \frac{P(A)}{|A|}$. Since we can fix a sequence of relatively compact subdomains $\Om_{h}$ converging to $\Om$ both in measure and in perimeter, we also conclude that $h(\Om) = \frac{P(\Om)}{|\Om|}$, that is, $\Om$ is self-Cheeger. To prove that $\Om$ minimizes the perimeter with respect to outer variations, we fix a bounded open set $F$ with Lipschitz boundary and strictly containing $\Om$, then we apply the divergence theorem to the calibrating vector field $\xi$ on $F\setminus \overline{\Om}$:
\begin{align*}
0 &= \int_{F\setminus \overline{\Om}} \mdiv \xi = \int_{\de F} \xi \cdot \nu_{F}\, d\Hau^{n-1} - \int_{\de \Om} \xi \cdot \nu_{F}\, d\Hau^{n-1} 
\\ &\leq P(F) - P(\Om)\,,
\end{align*}
which implies that $\Om$ is mean-convex.

\section{Some general results on the Cheeger problem}
\label{section:tre}
After recalling some basic facts about sets of finite perimeter, we shall present some general (and mostly known) results on the Cheeger problem for domains in $\R^{n}$. We shall also recall some more specific results valid for planar domains, that will be needed in Section \ref{section:quattro}.

For a given $x\in \R^{n}$ and $r>0$, we set $B_{r}(x)=\{y\in \R^{n}:\ |y-x|<r\}$, where $|v|$ is the Euclidean norm of the vector $v\in \R^{n}$. Given $A\subset \R^{n}$ we denote by $\chi_{A}$ its characteristic function. With a slight abuse of notation, we write $|A|$ for the Lebesgue (outer) measure of $A$. We then set $\om_{n} = |B_{1}(0)|$. We define the \textit{perimeter} of a Borel set $E$ as
\[
P(E) = \sup\left\{\int_{E}\mdiv g:\ g\in C^{1}_{c}(\R^{n};\R^{n}),\ |g|\leq 1\right\}\,.
\]
When $P(E)<+\infty$, we say that $E$ has finite perimeter (in $\R^{n}$). In this case, $P(E)$ coincides with the total variation of the distributional gradient of the characteristic function of $E$:
\[
P(E) = |D\chi_{E}|(\R^{n})\,,
\]
which more generally allows us to define the \textit{relative perimeter} 
\[
P(E;A) := |D\chi_{E}|(A)
\]
for any Borel set $A\subset \R^{n}$.  By Radon-Nikodym Theorem we can find a Borel $\R^{n}$-valued function $\nu_{E}$ such that $|\nu_{E}| = 1$ $|D\chi_{E}|$-almost everywhere and
\[
D\chi_{E} = -\nu_{E} |D\chi_{E}|\,.
\]
One can interpret $\nu_{E}$ as a \textit{generalized exterior normal} to the boundary of $E$. In order to clarify this concept, we recall the definition of \textit{reduced boundary} $\de^{*}E$. We say that $x\in \de^{*}E$ if $0<|E\cap B(x,r)| < \om_{n}r^{n}$ for all $r>0$ and
\[
\exists\,\nu_{E}(x) := -\lim_{r\to 0^{+}}\frac{D\chi_{E}(B_{r}(x))}{|D\chi_{E}|(B_{r}(x))}\,,\qquad |\nu_{E}(x)| = 1\,.
\]
Then we quote a classical result by De Giorgi \cite{DeGiorgi2006}:
\begin{teo}[De Giorgi]\label{teo:degiorgi}
Let $E$ be a set of finite perimeter, then 
\begin{itemize}
\item[(i)] $\de^{*}E$ is countably $\Hau^{n-1}$-rectifiable in the sense of Federer \cite{FedererBOOK};

\item[(ii)] for any $x\in \de^{*}E$, $\chi_{t(E-x)} \to \chi_{H_{\nu_{E}(x)}}$ in $L^{1}_{loc}(\R^{n})$ as $t\to +\infty$, where $H_{\nu}$ denotes the half-space through $0$ whose exterior normal is $\nu$;

\item[(iii)] for any Borel set $A$, $P(E;A) = \Hau^{n-1}(A\cap \de^{*}E)$;

\item[(iv)] $\int_{E}\mdiv g = \int_{\de^{*}E} g\cdot \nu_{E}\, d\Hau^{n-1}$ for any $g\in C^{1}_{c}(\R^{n};\R^{n})$.
\end{itemize}
\end{teo}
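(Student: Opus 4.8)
The plan is to follow De Giorgi's original blow-up strategy: all four assertions will be extracted from the single fact that, at every $x\in\de^{*}E$, the rescaled sets $E_{x,r}:=r^{-1}(E-x)$ converge in $L^{1}_{loc}(\R^{n})$, as $r\to0^{+}$, to the half-space $H_{\nu_{E}(x)}$. Granting this, (ii) holds by definition; the rescaled measures $r^{1-n}|D\chi_{E}|(B_{r}(x))$ converge to those of the limiting hyperplane, so $|D\chi_{E}|$ turns out to have unit $(n-1)$-dimensional density on $\de^{*}E$, which, together with the fact that $|D\chi_{E}|$ gives no mass outside $\de^{*}E$, identifies $|D\chi_{E}|$ with $\Hau^{n-1}\restrict\de^{*}E$ --- that is (iii); then (iv) is just the general Gauss--Green identity $\int_{E}\mdiv g=\int g\cdot dD\chi_{E}$ rewritten via this representation, and (i) follows from (iii) and the existence, at $\Hau^{n-1}$-a.e.\ point of $\de^{*}E$, of a measure-theoretic tangent plane (again supplied by the blow-up), through the standard rectifiability criterion.

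The heart of the proof is thus the blow-up. The first ingredient is a set of \emph{density estimates}: for $x\in\de^{*}E$ there are dimensional constants $c(n),C(n)>0$ and $r_{0}=r_{0}(E,x)>0$ with
\[
c(n)\,r^{n-1}\ \le\ |D\chi_{E}|(B_{r}(x))\ \le\ C(n)\,r^{n-1},\qquad c(n)\ \le\ \frac{|E\cap B_{r}(x)|}{\om_{n}r^{n}}\ \le\ 1-c(n)
\]
for all $0<r<r_{0}$. I would deduce these from the relative isoperimetric inequality in balls, $\min\{|E\cap B_{r}(x)|,\,|B_{r}(x)\setminus E|\}^{(n-1)/n}\le C(n)\,|D\chi_{E}|(B_{r}(x))$, combined with a slicing identity which, once the averaged normal is close to $\nu_{E}(x)$, forces $|D\chi_{E}|(B_{r}(x))$ to be comparable to $|D_{n}\chi_{E}(B_{r}(x))|\le\om_{n-1}r^{n-1}$, and with the Euclidean isoperimetric inequality applied to $E\cap B_{r}(x)$ to bootstrap the volume bounds. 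After rescaling, this makes $\{E_{x,r}\}_{0<r<r_{0}}$ precompact in $L^{1}_{loc}$ with uniformly locally bounded perimeters. This is in my view the main obstacle: it is the classical, somewhat delicate ``De Giorgi lemma'', and getting the two isoperimetric inequalities to interact correctly is the technical crux.

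Next I would identify every subsequential blow-up limit. By the definition of $\de^{*}E$, the averaged normal $|D\chi_{E}|(B_{r}(x))^{-1}\int_{B_{r}(x)}\nu_{E}\,d|D\chi_{E}|$ converges to $\nu_{E}(x)$, which is a \emph{unit} vector; since $|\nu_{E}|\equiv1$, squaring self-improves this to the vanishing of De Giorgi's excess, $|D\chi_{E}|(B_{r}(x))^{-1}\int_{B_{r}(x)}|\nu_{E}-\nu_{E}(x)|^{2}\,d|D\chi_{E}|\to0$. Rescaled, the excess of $E_{x,r}$ on each fixed ball tends to $0$; hence, along any $r_{k}\to0$ with $E_{x,r_{k}}\to F$ in $L^{1}_{loc}$, one gets $D\chi_{E_{x,r_{k}}}\rightharpoonup-\nu_{E}(x)\,\mu$ weakly-$*$, where $\mu$ is the weak-$*$ limit of $|D\chi_{E_{x,r_{k}}}|$, so $D\chi_{F}=-\nu_{E}(x)\,\mu$ with $|D\chi_{F}|=\mu$ --- no loss of mass, as $|\nu_{E}(x)|=1$. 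Thus $\chi_{F}$ has distributional gradient in the single fixed direction $\nu_{E}(x)$; slicing $F$ along that direction and using that a $\{0,1\}$-valued monotone $BV$ function of one variable is a single jump shows $F$ is a half-space, while the lower density bound rules out $F=\emptyset$ or $F=\R^{n}$ and the density estimates (which refine to $|E\cap B_{r}(x)|/|B_{r}(x)|\to\tfrac12$) pin down $F=H_{\nu_{E}(x)}$. Since the limit is always the same, the full limit exists, which is (ii); then (iii), (iv), (i) follow as described above, using the Besicovitch differentiation theorem to see that $|D\chi_{E}|$ is concentrated on $\de^{*}E$ and the comparison theorem between Radon and Hausdorff measures to conclude $|D\chi_{E}|=\Hau^{n-1}\restrict\de^{*}E$.
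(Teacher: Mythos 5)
The paper does not actually prove this theorem: it is quoted as a classical result of De Giorgi with a citation to \cite{DeGiorgi2006} (see also \cite{AmbFusPal2000,FedererBOOK}), so there is no in-paper argument to compare against, and your proposal must be judged on its own. It is a correct outline of the standard blow-up proof: the density estimates at reduced-boundary points (De Giorgi's lemma, from the relative isoperimetric inequality together with the slicing bound $|D_i\chi_E(B_r)|\le \om_{n-1}r^{n-1}$), the self-improvement of the convergence of averaged normals to the vanishing of the excess, compactness plus identification of blow-up limits as half-spaces from the fact that the limit gradient points in a single direction, and then (iii) from the resulting unit $(n-1)$-density of $|D\chi_E|$ on $\de^{*}E$ combined with Besicovitch differentiation and the comparison theorem between Radon and Hausdorff measures, (iv) from the distributional Gauss--Green identity, and (i) from the approximate tangent planes via the usual rectifiability criterion. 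One point to tighten: to pin the blow-up half-space through the origin you should not invoke $|E\cap B_r(x)|/|B_r(x)|\to \tfrac12$, since that statement is itself a corollary of (ii) and would be circular as an input; the clean argument is that the two-sided volume density estimates hold at all sufficiently small scales around $x$, hence transfer to every scale of any subsequential limit $F$, which is incompatible with $F=\{y\cdot\nu_E(x)\le c\}$ for $c\neq 0$. With that adjustment your plan is precisely the classical proof as presented, e.g., in \cite{AmbFusPal2000}.
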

The perimeter functional extends the usual notion of $(n-1)$-dimensional measure of the boundary of a set, in the sense that, for instance, $P(E) = \Hau^{n-1}(\de E)$ for any bounded set $E$ with Lipschitz boundary. 
The advantage of using the perimeter functional instead of the Hausdorff measure in geometric variational problems is mainly due to the lower-semicontinuity and compactness properties stated in the following proposition (see, e.g., \cite{AmbFusPal2000}):
\begin{prop}[Lower-semicontinuity and compactness]\label{prop:semicomp}
Let $\Om\subset \R^{n}$ be an open set and let $(E_{j})_{j}$ be a sequence of Borel sets. We have the following well-known properties:
\begin{itemize}
\item[(i)] if $E$ is a Borel set, such that $\chi_{E_{j}}\to\chi_{E}$ in $L^{1}_{loc}(\Om)$, then $P(E;\Om) \leq \liminf\limits_{j} P(E_{j};\Om)$;
\item[(ii)] if there exists a constant $C>0$ such that $P(E_{j};\Om)\leq C$ for all $j$, then there exists a subsequence $E_{j_{k}}$ and a Borel set $E$ such that $\chi_{E_{j_{k}}}\to \chi_{E}$ in $L^{1}_{loc}(\Om)$.
\end{itemize}
\end{prop}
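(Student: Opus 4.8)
The plan is to obtain both parts directly from the definition of $P(\,\cdot\,;\Om)$ as a supremum of linear functionals, together with the standard compactness theory for functions of bounded variation; I expect only the compactness assertion to require real work.

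For \emph{lower semicontinuity} I would argue as follows. Fix $g\in C^{1}_{c}(\Om;\R^{n})$ with $|g|\le 1$. Since $\mdiv g$ is bounded and compactly supported in $\Om$, the map $F\mapsto \int_{\Om}\chi_{F}\,\mdiv g$ is continuous for $L^{1}_{loc}(\Om)$-convergence, so $\int_{\Om}\chi_{E}\,\mdiv g=\lim_{j}\int_{\Om}\chi_{E_{j}}\,\mdiv g\le \liminf_{j}P(E_{j};\Om)$; taking the supremum over all admissible $g$ gives $P(E;\Om)\le \liminf_{j}P(E_{j};\Om)$. This step is essentially immediate.

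For \emph{compactness} I would first fix an exhaustion $\Om_{1}\compact\Om_{2}\compact\cdots$ of $\Om$ by bounded open sets with smooth boundary with $\bigcup_{k}\Om_{k}=\Om$. On each $\Om_{k}$ the functions $u_{j}:=\chi_{E_{j}}$ are bounded in $L^{1}(\Om_{k})$ (indeed in $L^{\infty}$) and satisfy the uniform total-variation bound $|Du_{j}|(\Om_{k})\le C$. The crucial ingredient is the translation estimate: for $h\in\R^{n}$ with $|h|<\dist(\Om_{k},\de\Om_{k+1})$,
\[
\int_{\Om_{k}}|u_{j}(x+h)-u_{j}(x)|\,dx\le |h|\,|Du_{j}|(\Om_{k+1})\le C|h|,
\]
which I would prove by mollifying $u_{j}$, using the pointwise bound $|\nabla(u_{j}*\rho_{\eps})|\le |Du_{j}|*\rho_{\eps}$, and passing to the limit $\eps\to 0$. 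Combined with the uniform $L^{1}$ bound, the Fr\'echet--Kolmogorov compactness criterion then gives that $(u_{j})_{j}$ is precompact in $L^{1}(\Om_{k})$; a diagonal argument over $k$ produces a subsequence $u_{j_{k}}$ converging in $L^{1}_{loc}(\Om)$, and (up to a further subsequence) a.e.\ in $\Om$, to some $u$ with $0\le u\le 1$. Since each $u_{j_{k}}$ takes only the values $0$ and $1$ almost everywhere, so does the limit, whence $u=\chi_{E}$ with $E:=\{u=1\}$ the Borel set we want.

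The main obstacle is precisely this precompactness in $L^{1}_{loc}(\Om)$: it hinges on the translation estimate — hence on the mollification argument controlling $|\nabla(u_{j}*\rho_{\eps})|$ by the total variation — and on invoking the Fr\'echet--Kolmogorov theorem on each $\Om_{k}$ before diagonalizing. Recovering an honest limiting set from ``uniformly bounded in $\BVloc$'' is then routine, via pointwise a.e.\ convergence along a subsequence. Complete proofs of both statements are classical and can be found in \cite{AmbFusPal2000}.
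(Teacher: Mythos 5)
Your proof is correct: part (i) follows at once from the definition of $P(\cdot\,;\Omega)$ as a supremum of functionals that are continuous under $L^{1}_{loc}(\Omega)$ convergence of characteristic functions, and part (ii) is the standard $BV$ compactness argument (translation estimate via mollification, Fr\'echet--Kolmogorov on an exhaustion, diagonalization, and passing to an a.e.\ limit which is again a characteristic function). The paper itself gives no proof of this proposition --- it is stated as well known and referred to \cite{AmbFusPal2000} --- and your argument is precisely the classical one found there, so there is nothing to amend.
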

Other useful properties of the perimeter (invariance by isometries and scaling property, isoperimetric inequality, lattice property) are collected in the next proposition.
\begin{prop}
Given two Borel sets $E,F\subset \R^{n}$ of finite perimeter, $\lambda>0$ and an isometry $T:\R^{n}\to \R^{n}$, we have
\begin{align}
\label{Pomogeneo}
P(\lambda T(E)) &= \lambda^{n-1}P(E)\,,\\[3pt] 
\label{isopRn}
P(E) &\geq n\om_{n}^{\frac 1n}|E|^{\frac{n-1}{n}}\,,\\[3pt]
\label{reticolo}
P(E\cup F) + P(E\cap F) &\leq P(E) + P(F)\,.
\end{align}
\end{prop}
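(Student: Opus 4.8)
The plan is to treat the three statements separately, relying only on the variational definition of the perimeter and on the lower-semicontinuity property of Proposition~\ref{prop:semicomp}. Identities \eqref{Pomogeneo} and \eqref{reticolo} are soft and follow by direct manipulation, whereas the sharp isoperimetric inequality \eqref{isopRn} is the genuinely deep input and will be the main obstacle; for it I would ultimately quote the classical result, indicating only the standard route.

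\emph{Scaling and isometry invariance.} Write the isometry as $T(x)=Rx+b$ with $R\in O(n)$ and $b\in\R^{n}$. Given a competitor $g\in C^{1}_{c}(\R^{n};\R^{n})$ with $|g|\le 1$ in the definition of $P(E)$, I would set $\tilde g(y):=R\,g\big(T^{-1}(y/\lambda)\big)$, which again belongs to $C^{1}_{c}$ and satisfies $|\tilde g|\le 1$ since $R$ is orthogonal. A chain-rule computation gives $\mdiv\tilde g(y)=\lambda^{-1}(\mdiv g)\big(T^{-1}(y/\lambda)\big)$, and the change of variables $y=\lambda T(x)$, whose Jacobian determinant is $\lambda^{n}$, yields $\int_{\lambda T(E)}\mdiv\tilde g=\lambda^{n-1}\int_{E}\mdiv g$. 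Taking the supremum over $g$ gives $P(\lambda T(E))\ge\lambda^{n-1}P(E)$, and the opposite inequality follows by symmetry, applying the same estimate to the set $\lambda T(E)$ together with the dilation $\lambda^{-1}$ and the isometry $T^{-1}$.

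\emph{Submodularity.} I would argue by smooth approximation. Let $\rho_{\eps}$ be a standard mollifier and put $u_{\eps}=\chi_{E}\ast\rho_{\eps}$, $v_{\eps}=\chi_{F}\ast\rho_{\eps}$; these are smooth, take values in $[0,1]$, converge in $L^{1}_{loc}(\R^{n})$ to $\chi_{E}$ and $\chi_{F}$, and satisfy $\int_{\R^{n}}|\nabla u_{\eps}|\le P(E)$, $\int_{\R^{n}}|\nabla v_{\eps}|\le P(F)$ by Young's inequality applied to $D\chi_{E}\ast\rho_{\eps}$ and $D\chi_{F}\ast\rho_{\eps}$. Since $\max(u_{\eps},v_{\eps})\to\chi_{E\cup F}$ and $\min(u_{\eps},v_{\eps})\to\chi_{E\cap F}$ in $L^{1}_{loc}(\R^{n})$, Proposition~\ref{prop:semicomp}(i) (with $\Om=\R^{n}$) bounds $P(E\cup F)$ and $P(E\cap F)$ from above by the respective $\liminf$ of $\int|\nabla\max(u_{\eps},v_{\eps})|$ and $\int|\nabla\min(u_{\eps},v_{\eps})|$. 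The one elementary point is the pointwise identity $|\nabla\max(u,v)|+|\nabla\min(u,v)|=|\nabla u|+|\nabla v|$, valid $\Leb^{n}$-a.e.\ for $C^{1}$ functions $u,v$ (it is trivial on the open sets $\{u>v\}$ and $\{v>u\}$, while on $\{u=v\}$ one uses $\nabla u=\nabla v$ $\Leb^{n}$-a.e.). Integrating it and summing the two lower-semicontinuity estimates gives $P(E\cup F)+P(E\cap F)\le\int_{\R^{n}}|\nabla u_{\eps}|+\int_{\R^{n}}|\nabla v_{\eps}|\le P(E)+P(F)$.

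\emph{Isoperimetric inequality.} This is the classical sharp isoperimetric inequality in $\R^{n}$ and is the hard part: unlike the other two, it does not reduce to a manipulation of the definition. The route I would indicate is: (a) prove it first for bounded open sets $A$ with smooth boundary via the Brunn--Minkowski inequality $|A+B_{\eps}(0)|^{1/n}\ge|A|^{1/n}+\om_{n}^{1/n}\eps$, subtracting $|A|^{1/n}$, expanding to first order in $\eps$, and letting $\eps\to0^{+}$, using that for smooth $A$ the Minkowski content $\lim_{\eps\to0^{+}}\eps^{-1}\big(|A+B_{\eps}(0)|-|A|\big)$ equals $P(A)$; (b) for a general set $E$ of finite perimeter with $|E|<\infty$, approximate $\chi_{E}$ in $L^{1}$ by smooth sets $E_{j}$ with $P(E_{j})\to P(E)$ (strict $BV$ approximation, e.g.\ by mollification and the coarea formula) and pass to the limit using $|E_{j}|\to|E|$. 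The characterization of balls as the only minimizers is not needed here and would be omitted; since this is standard, in an overview one may alternatively just cite it, e.g.\ \cite{AmbFusPal2000}.
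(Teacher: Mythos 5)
The paper itself offers no proof of this proposition: the three properties are quoted as classical facts about sets of finite perimeter (the standard reference being \cite{AmbFusPal2000}), so there is no argument of the paper to compare yours against. Your proofs are correct and follow the standard routes: for \eqref{Pomogeneo} the duality definition plus the change of variables $y=\lambda T(x)$ (your computation of $\mdiv\tilde g$ and of the Jacobian $\lambda^{n}$ is right, and the symmetry argument closes the equality); for \eqref{reticolo} mollification together with the a.e.\ identity $|\nabla\max(u,v)|+|\nabla\min(u,v)|=|\nabla u|+|\nabla v|$ and lower semicontinuity; for \eqref{isopRn} Brunn--Minkowski plus the identification of the Minkowski content with the perimeter for smooth bounded sets, followed by strict approximation. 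This buys a self-contained justification where the paper simply delegates to the literature, which is perfectly in the spirit of an overview; citing \cite{AmbFusPal2000} for the sharp isoperimetric inequality, as you suggest, is equally acceptable.

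Two small points of precision. In the submodularity step you invoke Proposition~\ref{prop:semicomp}(i), but that statement is formulated for sequences of \emph{sets} ($\chi_{E_j}\to\chi_E$), whereas $\max(u_\eps,v_\eps)$ and $\min(u_\eps,v_\eps)$ are not characteristic functions; what you need is the lower semicontinuity of the total variation under $L^1_{loc}$ convergence of general functions. This is not a real gap --- it follows at once from the same duality definition (fix $\xi\in C^1_c$ with $|\xi|\le1$, pass to the limit in $\int w_\eps\,\mdiv\xi$, then take the supremum), but you should state that version rather than the set version, or alternatively reduce to sets via the coarea formula applied to $\max(u_\eps,v_\eps)$. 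Second, \eqref{isopRn} is meaningful only when $|E|<\infty$ (as stated it fails, e.g., for the complement of a ball); you correctly insert this hypothesis in step (b), and it is implicit in the paper since all sets there live inside a bounded $\Om$, so no action is needed beyond being aware of it.
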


We point out that sets of finite perimeter can be extremely weird. For instance, 
let $G$ be the countable union of open balls of radius $2^{-i}$ centered at $q_{i},\ i\in \N$, where $(q_{i})_{i\in \N}$ is any enumeration of all points with rational coordinates in $\R^{2}$. By \eqref{reticolo} and lower-semicontinuity of the perimeter, $G$ has finite perimeter. However, its topological boundary has a positive Lebesgue measure (thus in particular its $(n-1)$-dimensional Hausdorff measure is $+\infty$). While generic sets of finite perimeter may thus be very irregular, a regularity theory is available in particular for \textit{minimizers} of the perimeter subject to a volume constraint (see \cite{Tamanini1982}). 
\begin{teo}[Regularity of perimeter minimizers with volume constraint]\label{teo:regolarita}
Let $\Om$ be a fixed open domain, and assume $E$ is a Borel set satisfying the following property: $P(E;\Om)<+\infty$ and for all Borel $F$ such that $E\difsim F \subset\subset \Om$ and $|F\cap \Om| = |E\cap \Om|$, it holds
\[
P(F;\Om) \leq P(E;\Om)\,.
\]
Then, $\de^{*}E \cap \Om$ is an analytic surface with constant mean curvature, and the singular set $(\de E \setminus \de^{*}E) \cap \Om$ is a closed set with Hausdorff dimension at most $n-8$. 
\end{teo}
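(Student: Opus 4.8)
The plan is to reduce the volume-constrained minimality of $E$ to an \emph{unconstrained almost-minimality} property, apply the classical interior regularity theory for such minimizers, and finally upgrade the resulting $C^{1,\al}$ regularity to analyticity through the Euler--Lagrange equation. We may assume $0<|E\cap\Om|<|\Om|$, since otherwise $\de^{*}E\cap\Om=\emptyset$ and both conclusions are vacuous; then $P(E;\Om)>0$, so by Theorem \ref{teo:degiorgi}(iii) the reduced boundary $\de^{*}E\cap\Om$ has positive $\Hau^{n-1}$-measure and in particular is infinite.

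\emph{Step 1: volume-fixing variations.} Given $x_{0}\in\de E\cap\Om$, I would fix a point $z\in\de^{*}E\cap\Om$ with $z\neq x_{0}$ and disjoint balls $B_{x_{0}},B_{z}\compact\Om$ centred at $x_{0},z$. The key lemma is that near $z$ one can prescribe a small amount of volume at linear perimeter cost: using the blow-up property Theorem \ref{teo:degiorgi}(ii) one chooses $X\in C^{1}_{c}(B_{z};\R^{n})$ with $\int_{\de^{*}E}X\cdot\nu_{E}\neq 0$, whose flow $\Phi_{t}$ satisfies $|\Phi_{t}(E)\cap\Om|-|E\cap\Om|=ct+o(t)$ with $c\neq 0$ and $|P(\Phi_{t}(E);\Om)-P(E;\Om)|\le C|t|$. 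Inverting $t\mapsto|\Phi_{t}(E)\cap\Om|-|E\cap\Om|$ yields, for all sufficiently small $\sigma$, a set $E^{\sigma}$ with $E^{\sigma}\difsim E\compact B_{z}$, $|E^{\sigma}\cap\Om|=|E\cap\Om|+\sigma$ and $|P(E^{\sigma};\Om)-P(E;\Om)|\le C'|\sigma|$.

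\emph{Step 2: almost-minimality.} Let $F$ be any competitor with $E\difsim F\compact B_{x_{0}}$ and set $\sigma=|E\cap\Om|-|F\cap\Om|$, so $|\sigma|\le|E\difsim F|$. Since the modifications defining $F$ and $E^{\sigma}$ are supported in the disjoint balls $B_{x_{0}}$ and $B_{z}$, gluing them produces $\widetilde F$ with $\widetilde F\difsim E\compact\Om$ and $|\widetilde F\cap\Om|=|E\cap\Om|$; the constrained minimality of $E$ gives $P(E;\Om)\le P(\widetilde F;\Om)$. Comparing the two sides on $\Om\setminus(\overline{B_{x_{0}}}\cup\overline{B_{z}})$ (where all sets agree), on $B_{x_{0}}$ ($\widetilde F=F$) and on $B_{z}$ ($\widetilde F=E^{\sigma}$), for generic radii one obtains
\[
P(E;B_{x_{0}})\le P(F;B_{x_{0}})+\bigl[P(E^{\sigma};B_{z})-P(E;B_{z})\bigr]\le P(F;B_{x_{0}})+\Lambda\,|E\difsim F|,\qquad \Lambda=C'.
\]
Restricting to perturbations supported in smaller balls $B_{r}(x_{0})$ shows that $E$ is a $(\Lambda,r_{0})$-perimeter-minimizer in a neighbourhood of every point of $\de E\cap\Om$.

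\emph{Step 3: regularity and analyticity.} The interior regularity theory for $\Lambda$-minimizers (see \cite{Tamanini1982}) then yields that $\de^{*}E\cap\Om$ is a $C^{1,\al}$ hypersurface, while Federer's dimension-reduction argument together with the non-existence of singular area-minimizing cones in dimension $\le 7$ (Simons) gives that the singular set $(\de E\setminus\de^{*}E)\cap\Om$ is closed with Hausdorff dimension at most $n-8$. Finally, composing a volume-preserving-to-first-order variation $X\in C^{1}_{c}(\Om;\R^{n})$ (i.e.\ $\int_{\de^{*}E}X\cdot\nu_{E}=0$) with a second-order volume correction at $z$ as in Step 1 and invoking minimality forces the first variation of perimeter to vanish on such $X$; hence it equals $H_{0}\int_{\de^{*}E}X\cdot\nu_{E}$ for a fixed constant $H_{0}$, so $\de^{*}E$ has distributional mean curvature equal to $H_{0}$. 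In local graph coordinates $\de^{*}E=\{x_{n}=u(x')\}$ this is the quasilinear uniformly elliptic equation $\mdiv\bigl(\nabla u/\sqrt{1+|\nabla u|^{2}}\bigr)=H_{0}$; Schauder estimates and bootstrapping give $u\in C^{\infty}$, and since the operator has real-analytic structure, the classical analytic-regularity theorem for elliptic equations gives $u$ real-analytic, so $\de^{*}E\cap\Om$ is an analytic constant-mean-curvature hypersurface. The main obstacle is Step 1: producing volume-adjusting variations whose perimeter cost is controlled linearly by the volume change — this is exactly where the geometry of the reduced boundary enters and where the volume constraint is absorbed into the $\Lambda$-term, making it invisible to the local regularity theory.
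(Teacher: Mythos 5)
The paper does not prove this theorem at all — it is quoted as a known result with a reference to Tamanini's notes \cite{Tamanini1982}, where the argument is precisely the one you outline. Your reconstruction (volume-fixing variations with linear perimeter cost, gluing to get $(\Lambda,r_{0})$-minimality, the almost-minimizer regularity theory with Federer dimension reduction and Simons' theorem for the singular set, and the Lagrange-multiplier first variation plus Schauder bootstrap and analytic elliptic regularity for the constant-mean-curvature conclusion) is correct and is essentially the standard proof the citation covers.
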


We now focus on the Cheeger problem, and in doing so we first present some general properties of the Cheeger constant $h(\Om)$ and of Cheeger sets inside $\Om$, valid for any dimension $n\geq 2$ (see \cite{StrZie1997, KawFri2003,KawLac2006}).

\begin{prop}\label{prop:CheegerGenProp}
Let $\Om,\widetilde \Om\subset \R^{n}$ be bounded, open sets. Then the following properties hold.
\begin{itemize}
\item[(i)] If $\Om\subset \widetilde\Om$ then $h(\Om) \geq h(\widetilde\Om)$.

\item[(ii)] For any $\lambda>0$ and any isometry $T:\R^{n}\to\R^{n}$, one has $h(\lambda T(\Om)) = \frac{1}{\lambda} h(\Om)$.

\item[(iii)] There exists a (possibly non-unique) Cheeger set $E\subset \Om$, i.e. such that $\frac{P(E)}{|E|} = h(\Om)$.

\item[(iv)] If $E$ is Cheeger in $\Om$, then $E$ minimizes the relative perimeter among sets with the same volume; consequently, $\de E\cap \Om$ has the regularity stated in Theorem \ref{teo:regolarita}, and in particular $\de^{*}E\cap \Om$ is a hypersurface of constant mean curvature equal to $\frac{h(\Om)}{n-1}$.

\item[(v)] If $E$ is Cheeger in $\Om$ then $|E| \geq \om_{n}\left(\frac{n}{h(\Om)}\right)^{n}$.

\item[(vi)] If $E$ and $F$ are Cheeger in $\Om$, then $E\cup F$ and $E\cap F$ (if it is not empty) are also Cheeger in $\Om$.

\item[(vii)] If $E$ is Cheeger in $\Om$ and $\Om$ has finite perimeter, then $\de E\cap \Om$ can meet $\de^{*}\Om$ only in a tangential way, that is, for any $x\in \de^{*}\Om\cap \de E$ one has that $x\in \de^{*}E$ and $\nu_{E}(x) = \nu_{\Om}(x)$.

\end{itemize}
\end{prop}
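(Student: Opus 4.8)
The plan is to obtain (i)--(iii) by the direct method, (iv)--(vi) by combining the isoperimetric inequality \eqref{isopRn} and the submodularity \eqref{reticolo} with a first-variation computation, and to treat (vii) separately as a free-boundary regularity statement. Parts (i) and (ii) are immediate: if $\Om\subset\widetilde\Om$ then every competitor $F$ for $h(\Om)$ is also a competitor for $h(\widetilde\Om)$, whence $h(\Om)\ge h(\widetilde\Om)$; and $F\mapsto\la T(F)$ is a bijection from the competitors for $h(\Om)$ onto those for $h(\la T(\Om))$ along which, by \eqref{Pomogeneo}, the ratio $P(F)/|F|$ gets multiplied by $\la^{-1}$. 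For (iii) I would take a minimizing sequence $F_j\subset\Om$ with $P(F_j)/|F_j|\to h(\Om)$; by \eqref{isopRn} one has $P(F_j)/|F_j|\ge n\om_n^{1/n}|F_j|^{-1/n}$, so $|F_j|\ge\om_n(n/h(\Om))^n$ for $j$ large, and since also $|F_j|\le|\Om|$ the perimeters $P(F_j)$ are uniformly bounded while the $\chi_{F_j}$ are all supported in a fixed ball $B\supset\overline\Om$. By the compactness part of Proposition~\ref{prop:semicomp} (applied on $B$), a subsequence converges in $L^1(\R^n)$ to some $\chi_E$; since $\chi_{F_j}=0$ a.e.\ outside $\Om$ the same holds for $\chi_E$, hence $E\subset\Om$, while $|F_j|\to|E|\ge\om_n(n/h(\Om))^n>0$ and lower semicontinuity of the perimeter give $P(E)/|E|\le\liminf_j P(F_j)/|F_j|=h(\Om)$, so $E$ is Cheeger.

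For (iv), set $h=h(\Om)$ and observe that for every $F\subset\Om$ we have $P(F)-h|F|\ge 0=P(E)-h|E|$; thus a Cheeger set $E$ minimizes $F\mapsto P(F)-h|F|$ over all Borel subsets of $\Om$. Restricting to competitors $F$ with $E\difsim F\subset\subset\Om$ and $|F|=|E|$, the measures $|D\chi_F|$ and $|D\chi_E|$ agree outside $\Om$, so $P(F)\ge P(E)$ becomes $P(F;\Om)\ge P(E;\Om)$: hence $E$ minimizes the relative perimeter under a volume constraint and compactly supported perturbations, and Theorem~\ref{teo:regolarita} gives the analyticity of $\de^*E\cap\Om$ and the dimension bound on the singular set. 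The value of the mean curvature is read off from the first variation of $P(\cdot)-h|\cdot|$: for $X\in C^1_c(\Om;\R^n)$ the flow of $\pm X$ keeps $E$ inside $\Om$ for small times, so $\int_{\de^*E}\mdiv_\tau X\,d\Hau^{n-1}=h\int_{\de^*E}X\cdot\nu_E\,d\Hau^{n-1}$, which says that $\de^*E\cap\Om$ has scalar mean curvature equal to the constant $\tfrac{h}{n-1}$. Part (v) is just \eqref{isopRn} applied to $E$: $h(\Om)=P(E)/|E|\ge n\om_n^{1/n}|E|^{-1/n}$, i.e.\ $|E|\ge\om_n(n/h(\Om))^n$. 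For (vi), if $E,F$ are Cheeger then \eqref{reticolo} gives $P(E\cup F)+P(E\cap F)\le P(E)+P(F)=h(|E\cup F|+|E\cap F|)$; but $E\cup F$ and, when $|E\cap F|>0$, $E\cap F$ are admissible, so $P(E\cup F)\ge h|E\cup F|$ and $P(E\cap F)\ge h|E\cap F|$, and these two lower bounds are compatible with the submodularity upper bound only if both hold as equalities, i.e.\ $E\cup F$ and $E\cap F$ are Cheeger.

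Part (vii) is the delicate one, and I would read it as a free-boundary statement. By (iv), $E$ minimizes $P(\cdot)-h|\cdot|$ subject to the obstacle constraint $E\subset\Om$, hence it is a $\Lambda$-minimizer of the perimeter with $\Lambda=h(\Om)$ with respect to competitors that remain inside $\Om$. Fix $x\in\de^*\Om\cap\de E$. Testing this $\Lambda$-minimality against the competitors $E\setminus B_r(x)$ and $E\cup(\Om\cap B_r(x))$ — the latter coinciding with $\Om$ inside $B_r(x)$ — together with the relative isoperimetric inequality in balls, one obtains the density bounds $|E\cap B_r(x)|\ge c_n r^n$ and $P(E;B_r(x))\le C r^{n-1}$ for all small $r$. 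The monotonicity formula and the compactness theory for $\Lambda$-minimizers then yield, along any sequence $r_j\to 0$, a blow-up limit $E_0$ which is a cone, is nonempty, is contained in the half-space $H=H_{\nu_\Om(x)}$ (because $E\subset\Om$ and, by Theorem~\ref{teo:degiorgi}(ii) at $x\in\de^*\Om$, the rescalings $(\Om-x)/r_j$ converge to $H$), and is locally perimeter-minimizing among subsets of $H$. It then remains to check that the only such cone is $H$ itself: if $E_0\neq H$ then $\de^*E_0$ must display a convex corner lying on $\de H$ (two flat faces, or a flat face and $\de H$, meeting at an angle $<\pi$), and ``chamfering'' that corner — removing a small wedge, an operation that decreases the perimeter linearly and the volume only quadratically in the size of the wedge, and that keeps the competitor inside $\Om$ — contradicts the $\Lambda$-minimality of $E$ at a small finite scale. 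Hence every blow-up of $E$ at $x$ is $H$, so $(E-x)/r\to\chi_{H}$ as $r\to0$, which by the definition of the reduced boundary means precisely $x\in\de^*E$ and $\nu_E(x)=\nu_\Om(x)$.

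The main obstacle is exactly this last argument: justifying that the blow-up limit is a cone and that the obstacle linearizes to a half-space at every point of $\de^*\Om$ — even though $\de\Om$ is only assumed of finite perimeter — and ruling out degenerate blow-ups, equivalently showing that a perimeter quasi-minimizer constrained to $\Om$ detaches from $\de\Om$ tangentially at regular points of the obstacle. A more economical alternative is to invoke the boundary regularity theory for perimeter quasi-minimizers with (sufficiently regular) obstacles, which gives $C^{1,\al}$ regularity of $\de^*E$ up to $\de^*\Om$ away from a small singular set, with the tangential contact built in; the statement (vii) is then just the measure-theoretic reformulation of that contact condition.
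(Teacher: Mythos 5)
Your treatment of (i)--(vi) is correct and essentially coincides with the paper's own sketch: (i)--(ii) follow from the definition together with \eqref{Pomogeneo}, (iii) is the direct method via Proposition \ref{prop:semicomp} (your use of \eqref{isopRn} to keep the limit volume bounded away from zero is exactly what makes that sketch rigorous), (iv) reduces to Theorem \ref{teo:regolarita} once one notes that a Cheeger set minimizes $P(\cdot)-h(\Om)|\cdot|$ among subsets of $\Om$ and that any competitor with $E\difsim F\subset\subset\Om$ is automatically contained in $\Om$, (v) is \eqref{isopRn}, and (vi) is the submodularity argument \eqref{reticolo} word for word as in the paper; the first-variation computation identifying the constant $\frac{h(\Om)}{n-1}$ is left implicit in the paper and your version of it is fine.

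Part (vii) is where the proposal does not close, and the gap is genuine, not a routine verification. Note first that the paper itself gives no proof of (vii) here: it explicitly defers to \cite{LeoPra2014}. In your blow-up scheme, $E$ is a minimizer of $P(\cdot)-h(\Om)|\cdot|$ only with respect to competitors contained in $\Om$, so at a point $x\in\de^{*}\Om$ it is not an unconstrained $\Lambda$-minimizer in any neighbourhood; consequently the monotonicity formula, the compactness theory and the conical structure of blow-ups cannot be invoked off the shelf. Moreover $\Om$ is only assumed to have finite perimeter, so its rescalings at $x$ converge to the half-space $H_{\nu_{\Om}(x)}$ merely in $L^{1}_{loc}$, with no uniform control of $\de\Om$ at small scales; this is precisely what you would need both to obtain density estimates with the correct constants and to transplant the ``chamfering'' competitor, built for the limit cone, back into the actual domain $\Om$ at a finite scale while respecting the constraint $E\subset\Om$. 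The chamfering step also tacitly assumes that a non-trivial blow-up cone exhibits a polyhedral corner on $\de H$, which is unjustified in general dimension, and degenerate (empty) blow-ups are not ruled out. The suggested shortcut through boundary-regularity theory for quasi-minimizers with an obstacle requires a regular obstacle, which is not available here. A route that avoids all of this, and is closer in spirit to \cite{LeoPra2014}, is to argue directly at the point $x$: compare $E$ with the admissible set $E\cup(\Om\cap B_{\rho}(x))$, which coincides with $\Om$ inside $B_{\rho}(x)$, and exploit that $x\in\de^{*}\Om$ (so that, by Theorem \ref{teo:degiorgi}, the volume and perimeter densities of $\Om$ at $x$ are those of a half-space) to show that $|(\Om\setminus E)\cap B_{\rho}(x)|$ is of order $o(\rho^{n})$ and that the perimeter density of $E$ at $x$ matches that of $\Om$; this forces the blow-up of $E$ at $x$ to be $H_{\nu_{\Om}(x)}$ and yields $x\in\de^{*}E$ with $\nu_{E}(x)=\nu_{\Om}(x)$, without any monotonicity formula or classification of minimizing cones. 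As it stands, your proposal proves (i)--(vi) but not (vii).
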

\begin{proof}[Sketch of proof]
(i) and (ii) are immediate consequences of the definition of Cheeger constant and of \eqref{Pomogeneo} coupled with $|\lambda \Om| = \lambda^{n}|\Om|$. The proofs of (iii) and (iv) are accomplished by, respectively, Proposition \ref{prop:semicomp} and Theorem \ref{teo:regolarita}. The proof of (v) follows from the isoperimetric inequality \eqref{isopRn} and the fact that $P(E) = h(\Om)|E|$. To prove (vi) we apply \eqref{reticolo} and get
\begin{align*}
h(\Om) (|E\cup F| + |E\cap F|) &= h(\Om)(|E|+|F|)\\
&= P(E)+P(F)\\
&\geq P(E\cup F) + P(E\cap F)\\ 
&\geq h(\Om)(|E\cup F| + |E\cap F|)\,,
\end{align*}
hence all previous inequalities are actually equalities and this happens if and only if
\[
P(E\cup F) = h(\Om)|E\cup F|\qquad\text{and}\qquad P(E\cap F) = h(\Om)|E\cap F|\,,
\]
which proves (vi). While the proofs of (i)-(vi) are essentially known and can be found in the previously cited references, for the proof of (vii) we refer to \cite{LeoPra2014}.

\end{proof}

\begin{oss}\rm 
We notice that, by Proposition \ref{prop:CheegerGenProp} (v) and (vi), we can always find minimal Cheeger sets in $\Om$ (possibly not unique) and a unique maximal Cheeger set (this last can be obtained as the union of all minimal Cheeger sets of $\Om$). An example of a domain with two disjoint minimal Cheeger sets is shown in Figure \ref{fig:duecheeger}.
\end{oss}

We consider the problem of continuity of the Cheeger constant $h(\Om)$ with respect to some suitable notions of convergence of domains. In \cite{LeoPra2014} we prove Theorem \ref{teo:hcontinua} below (see also \cite{Parini_doktorarbeit2009} for the special case of convex domains). Since the proof is particularly simple, we quote it below with full details. 
\begin{teo}[Continuity properties of the Cheeger constant, \cite{LeoPra2014}]\label{teo:hcontinua}
Let $\Om,\Om_{j}\subset \R^{n}$ be nonempty open bounded sets for all $j\in \N$. If $\chi_{\Om_{j}} \to \chi_{\Om}$ in $L^{1}$, then 
\begin{equation}\label{hlsc}
\liminf_{j\to\infty} h(\Om_{j}) \geq h(\Om)\,.
\end{equation}
If in addition $\Om, \Om_{j}$ are sets of finite perimeter and $P(\Om_{j})\to P(\Om)$ as $j\to\infty$, then
\begin{equation}\label{hc}
\lim_{j\to\infty} h(\Om_{j}) = h(\Om)\,.
\end{equation}
\end{teo}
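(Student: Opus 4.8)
The plan is to prove the two one-sided estimates $\liminf_{j} h(\Om_{j}) \ge h(\Om)$ and, under the extra assumptions, $\limsup_{j} h(\Om_{j}) \le h(\Om)$; combining the latter with \eqref{hlsc} gives \eqref{hc} at once.

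For \eqref{hlsc} I would argue by compactness of the associated Cheeger sets. There is nothing to prove if $\liminf_{j} h(\Om_{j}) = +\infty$, so pass to a subsequence (not relabelled) along which $h(\Om_{j}) \to L := \liminf_{j} h(\Om_{j}) < +\infty$; in particular $h(\Om_{j}) \le M$ for some $M$ and all $j$. By Proposition \ref{prop:CheegerGenProp}(iii) choose a Cheeger set $E_{j} \subset \Om_{j}$, so $P(E_{j}) = h(\Om_{j})\,|E_{j}|$. This gives two uniform bounds: a volume lower bound $|E_{j}| \ge \om_{n}(n/M)^{n} > 0$ from Proposition \ref{prop:CheegerGenProp}(v) (equivalently, from the isoperimetric inequality \eqref{isopRn}), and, since $|\Om_{j}| \to |\Om| < +\infty$, a perimeter bound $P(E_{j}) = h(\Om_{j})\,|E_{j}| \le M\,|\Om_{j}| \le C$. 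By the compactness statement in Proposition \ref{prop:semicomp}(ii) (applied with $\R^{n}$ in place of $\Om$), a further subsequence of $\chi_{E_{j}}$ converges in $L^{1}_{loc}(\R^{n})$ to $\chi_{E}$ for some Borel set $E$. Fixing a ball $B_{R} \supset \Om$, from $\chi_{E_{j}} \le \chi_{\Om_{j}} \to \chi_{\Om}$ one obtains $E \subset \Om$ up to a null set, and from $|E_{j}\setminus B_{R}| \le |\Om_{j}\setminus B_{R}| \to 0$ together with the local convergence inside $B_{R}$ one upgrades this to $|E_{j}| \to |E|$; in particular $|E| \ge \om_{n}(n/M)^{n} > 0$. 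Lower semicontinuity of the perimeter (Proposition \ref{prop:semicomp}(i)) gives $P(E) \le \liminf_{j} P(E_{j})$, and since $P(E_{j}) = h(\Om_{j})\,|E_{j}| \to L\,|E|$ this forces $P(E) \le L\,|E|$; as (up to a null set) $E$ is an admissible competitor for $h(\Om)$ we conclude $h(\Om) \le P(E)/|E| \le L$.

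For the reverse inequality, assume in addition that $\Om$ and the $\Om_{j}$ have finite perimeter and $P(\Om_{j}) \to P(\Om)$. Let $E \subset \Om$ be a Cheeger set of $\Om$ (Proposition \ref{prop:CheegerGenProp}(iii)); then $P(E) = h(\Om)\,|E| < +\infty$, since $h(\Om) \le P(\Om)/|\Om| < +\infty$. I would use $F_{j} := E \cap \Om_{j}$ as a competitor for $h(\Om_{j})$. On the volume side, $|F_{j}| = |E| - |E\setminus \Om_{j}| \ge |E| - |\Om \triangle \Om_{j}| \to |E| > 0$, so $F_{j}$ is admissible for $j$ large. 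On the perimeter side, the lattice inequality \eqref{reticolo} gives $P(E\cap \Om_{j}) \le P(E) + P(\Om_{j}) - P(E\cup \Om_{j})$; since $E \subset \Om$ we have $E\cup \Om_{j} \to E \cup \Om = \Om$ in $L^{1}$, so by lower semicontinuity $\liminf_{j} P(E\cup \Om_{j}) \ge P(\Om)$, and together with $P(\Om_{j}) \to P(\Om)$ this yields $\limsup_{j} P(E\cap \Om_{j}) \le P(E)$. Hence $\limsup_{j} h(\Om_{j}) \le \limsup_{j} P(F_{j})/|F_{j}| \le P(E)/|E| = h(\Om)$, which combined with \eqref{hlsc} proves \eqref{hc}.

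I expect the measure-theoretic bookkeeping (no mass escaping to infinity, admissibility of the competitors) to be routine, precisely because $\Om$ is bounded. The one genuinely delicate point, and the place where the hypothesis $P(\Om_{j}) \to P(\Om)$ is indispensable, is the bound $\limsup_{j} P(E\cap \Om_{j}) \le P(E)$: without perimeter convergence, intersecting $E$ with $\Om_{j}$ can create extra boundary along $\de\Om_{j}$ and the estimate fails, which is exactly why only \eqref{hlsc} holds under bare $L^{1}$ convergence.
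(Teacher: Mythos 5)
Your proposal is correct and follows essentially the same route as the paper: compactness and lower semicontinuity applied to Cheeger sets $E_{j}\subset\Om_{j}$ for \eqref{hlsc}, and the competitor $E\cap\Om_{j}$ with the lattice inequality \eqref{reticolo} plus $P(\Om_{j})\to P(\Om)$ for \eqref{hc}. You merely make explicit a few bookkeeping points the paper leaves implicit (the uniform volume lower bound from Proposition \ref{prop:CheegerGenProp}(v) and the fact that no mass escapes to infinity), which is fine.
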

\begin{proof}
Let $E_{j}$ be a Cheeger set in $\Om_{j}$ (whose existence is guaranteed by Proposition \ref{prop:CheegerGenProp} (iii)). Without loss of generality we assume that $\liminf\limits_{j\to\infty} P(E_{j})$ is finite, then by Proposition \ref{prop:semicomp} we deduce that $\chi_{E_{j}}\to \chi_{E}$ in $L^{1}$ as $j\to\infty$, up to subsequences and for some Borel set $E$ with positive volume. Since $E_{j}\subset \Om_{j}$ and $\chi_{\Om_{j}}\to \chi_{\Om}$ in $L^{1}$ as $j\to\infty$, one immediately infers that $E\subset \Om$ up to null sets. Then by Proposition \ref{prop:semicomp} and by the convergence of $|E_{j}|$ to $|E|$, one has
\[
h(\Om) \leq \frac{P(E)}{|E|} \leq \liminf_{j\to\infty}\frac{P(E_{j})}{|E_{j}|}\,,
\] 
which proves \eqref{hlsc}. If in addition $P(\Om_{j})\to P(\Om)$ as $j\to\infty$, then we consider $E$ Cheeger in $\Om$ and define $E_{j} = \Om_{j}\cap E$. One can easily check that $E_{j} \to E$ and $E\cup \Om_{j}\to \Om$ in $L^{1}$, as $j\to\infty$. Therefore by \eqref{reticolo} we find
\begin{align*}
\limsup_{j\to\infty} P(E_{j}) &\leq P(E) + \limsup_{j\to\infty} P(\Om_{j}) - \liminf_{j\to\infty} P(E\cup \Om_{j})\\ 
&\leq P(E) + P(\Om) - P(\Om)\\ 
&= P(E)\,,
\end{align*}
which combined with \eqref{hlsc} gives \eqref{hc}.
\end{proof}

\subsection{The Cheeger problem in convex domains} 
Further properties of the Cheeger constant and of Cheeger sets are known when the domain $\Om$ is convex. In particular, we refer to \cite{AltCas2009} and to the references therein for the proof of the following result.
\begin{teo}\label{teo:convC11unique}
Let $\Om\subset\R^{n}$ be a convex domain. Then there exists a unique Cheeger set $E$ in $\Om$. Moreover, $E$ is convex and of class $C^{1,1}$.
\end{teo}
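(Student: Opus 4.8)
The plan is to establish the three assertions of Theorem~\ref{teo:convC11unique} in the order: existence of a \emph{convex} Cheeger set, uniqueness, and finally $C^{1,1}$ regularity. First I would exploit the following convexification principle: if $E\subset\Om$ is any Cheeger set (which exists by Proposition~\ref{prop:CheegerGenProp}(iii)), then its convex hull $\widehat E$ is still contained in $\Om$ (since $\Om$ is convex), and the passage to the convex hull does not increase the isoperimetric ratio. Indeed $|\widehat E|\geq |E|$, while by the fact that the nearest-point projection onto the convex set $\widehat E$ is $1$-Lipschitz one gets $P(\widehat E)\leq P(E)$ (this is exactly the mean-convexity argument already used in the proof of Proposition~\ref{prop:calibrable}(i), applied with $\widehat E$ in place of $\Om$). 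Hence
\[
\frac{P(\widehat E)}{|\widehat E|}\leq \frac{P(E)}{|E|}=h(\Om),
\]
and since $h(\Om)$ is the infimum, $\widehat E$ is itself a Cheeger set. This already gives existence of a convex Cheeger set, which is automatically of class $C^{1,1}$ away from flat boundary portions by Proposition~\ref{prop:CheegerGenProp}(iv): on $\de E\cap\Om$ the reduced boundary is an analytic hypersurface of constant mean curvature $h(\Om)/(n-1)$, and convexity forces the singular set to be empty and the curvature bound to hold from outside as well; a convex body whose boundary has mean curvature bounded above (in the support/viscosity sense) is $C^{1,1}$, which handles the regularity claim once uniqueness and the boundary behaviour are understood.

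For uniqueness, I would argue by contradiction using the lattice property. Suppose $E_1,E_2$ are two distinct Cheeger sets; by the convexification step we may take both convex, and by Proposition~\ref{prop:CheegerGenProp}(vi) their union $E_1\cup E_2$ is Cheeger as well, so (convexifying again) its convex hull $\widehat{E_1\cup E_2}$ is Cheeger. Replacing $E_2$ by this hull, we may assume $E_1\subset E_2$ with both convex and Cheeger. Now consider the constant-mean-curvature structure: $\de E_1\cap\Om$ and $\de E_2\cap\Om$ are analytic hypersurfaces with the \emph{same} constant mean curvature $h(\Om)/(n-1)$, and by Proposition~\ref{prop:CheegerGenProp}(vii) each meets $\de\Om$ tangentially. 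If $E_1\subsetneq E_2$ there is an interior point of $\de E_1\cap\Om$ where $\de E_1$ touches $\de E_2$ from inside; since both are smooth CMC hypersurfaces with equal curvature and $E_1$ lies locally inside $E_2$, the strong maximum principle for the constant-mean-curvature (mean-curvature-operator) equation forces them to coincide in a neighbourhood, hence by analyticity and connectedness $\de E_1\cap\Om=\de E_2\cap\Om$, so $E_1=E_2$. The remaining case is that $\de E_1$ and $\de E_2$ only touch along $\de\Om$; but then, using the tangential contact of both with $\de^*\Om$ together with volume comparison, one shows directly that the symmetric difference has measure zero. I would make this last point rigorous by observing that $E_2\setminus E_1$ would be a region bounded by pieces of two CMC hypersurfaces of equal curvature meeting $\de\Om$ tangentially, on which a first-variation computation yields $P$ and $|\cdot|$ incompatible with both sets realizing $h(\Om)$ unless it is empty.

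The main obstacle is the maximum-principle step and, more delicately, ruling out the possibility that the two candidate Cheeger sets agree in $\Om$'s interior but differ in how they "fill" the corners or flat parts of $\de\Om$ — i.e. making precise the tangency condition from Proposition~\ref{prop:CheegerGenProp}(vii) and combining it with the CMC condition to get a clean contradiction. A cleaner route, which I would actually prefer to adopt, is to avoid the geometric touching argument altogether and instead use strict convexity of an auxiliary functional: if $E_1,E_2$ are Cheeger, then $\frac12(\chi_{E_1}+\chi_{E_2})$ is a BV function whose super-level sets interpolate between $E_1\cup E_2$ and $E_1\cap E_2$, and the equality case in the coarea/lattice inequalities (together with the fact, from Proposition~\ref{prop:CheegerGenProp}(vi), that $E_1\cap E_2$ is Cheeger when nonempty) forces $\de E_1\cap\Om$ and $\de E_2\cap\Om$ to be "parallel" CMC surfaces enclosing the same volume at the same perimeter, which by the constant-mean-curvature rigidity can only happen if they coincide. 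For the $C^{1,1}$ statement, the remaining work is to upgrade the interior analytic regularity of $\de^*E\cap\Om$ and the tangential contact at $\de^*\Om$ to a global $C^{1,1}$ bound on $\de E$: the curvature is bounded by $h(\Om)/(n-1)$ from the Cheeger (CMC) condition on the free part, and from convexity plus the barrier given by $\Om$ on the constrained part, so $\de E$ satisfies a uniform two-sided ball condition, which is equivalent to $C^{1,1}$; I would cite \cite{AltCas2009} for the details of this last technical point rather than reproduce it.
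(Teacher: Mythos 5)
There is a genuine gap, and it sits exactly at the heart of the theorem. Your convexification step is not justified: the inequality $P(\widehat E)\le P(E)$ for the convex hull $\widehat E$ of $E$ does not follow from the $1$-Lipschitz projection argument. That argument (the one used for mean-convexity in the proof of Proposition~\ref{prop:calibrable}(i)) gives $P(K)\le P(F)$ when the convex set $K$ is \emph{contained} in $F$; here $\widehat E\supset E$, so the nearest-point projection onto $\widehat E$ restricted to $\de^* E$ is the identity and yields nothing. Worse, the inequality is simply false in general: in the plane it already fails for disconnected sets (two small balls far apart), and in $\R^{n}$, $n\ge 3$, it fails even for connected sets (a thin tube along the edges of a large tetrahedron has small perimeter, while its convex hull has large perimeter), and you cannot assume a priori that a Cheeger set is ``hull-friendly''. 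Convexity of the Cheeger set is precisely the deep part of the statement; the paper gives no proof and points to \cite{AltCas2009} (see also \cite{CasChaNov2007}, \cite{StrZie1997}), where convexity is obtained through the link with the zero-gravity capillary problem and the characterization of calibrable sets, not by a convex-hull comparison.

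The uniqueness part inherits this problem and adds one of its own. The interior touching-point argument via the strong maximum principle is sound (it is the idea sketched for Theorem~\ref{teo:genericunico}), but the case you acknowledge as delicate --- boundaries of the two candidates meeting only along $\de\Om$, or differing only in how they fill flat parts or corners --- is exactly where convexity of $\Om$ must enter, and neither of your fixes supplies it: the ``first-variation computation'' is not carried out, and the preferred ``cleaner route'' via $\tfrac12(\chi_{E_1}+\chi_{E_2})$ and equality in the coarea/lattice inequalities never uses convexity of $\Om$, so if it worked it would prove uniqueness for arbitrary bounded domains, contradicting the paper's own Examples~\ref{ex:duecheeger} and~\ref{ex:pinocchio}. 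Moreover, the reduction to nested sets again invokes the convex hull of $E_1\cup E_2$, i.e.\ the flawed step above. Deferring the $C^{1,1}$ regularity to \cite{AltCas2009} is acceptable (the paper itself only cites the result and indicates the calibrability/capillarity route), but as written your proposal establishes neither convexity nor uniqueness.
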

We remark that Theorem \ref{teo:convC11unique} was proved in \cite{CasChaNov2007} under stronger assumptions on $\Om$. The proof is essentially based on exploiting the link between the Cheeger problem and the capillary problem with zero gravity (i.e., with vertical contact at the boundary, see the discussion in the previous section). In particular, one has that a convex domain $\Om$ is self-Cheeger if and only if it is calibrable, and this happens precisely when $\Om$ is of class $C^{1,1}$ and the mean curvature of $\de\Om$ is bounded from above by $\frac{P(\Om)}{(n-1)|\Om|}$. 

More can be said about Cheeger sets of convex domains of the plane. For the proof of the following result, see \cite{StrZie1997, KawLac2006}.
\begin{teo}\label{teo:kawlac}
Let $\Om$ be a bounded convex set in $\R^{2}$. Then the unique Cheeger set $E$ of $\Om$ is the union of all balls of radius $r = h(\Om)^{-1}$ that are contained in $\Om$. Moreover, if we define the inner Cheeger set as
\begin{equation}\label{innercheeger}
E_{r} = \{x\in \Om:\ \dist(x,\de \Om)>r\}
\end{equation}
we have $E = E_{r}+B(0,r)$ (as a Minkowski sum) and it holds
\begin{equation}\label{KLformula}
|E_{r}| = \pi r^{2}\,.
\end{equation}
\end{teo}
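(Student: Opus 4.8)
The plan is to prove the theorem in three stages: first show that $E_r + B(0,r) \subset \Om$ is a natural Cheeger candidate, then verify it is \emph{the} Cheeger set using the characterization in Theorem \ref{teo:kawlac}'s hypotheses together with the $C^{1,1}$ regularity from Theorem \ref{teo:convC11unique}, and finally derive the identity $|E_r| = \pi r^2$ by a perimeter-volume computation for Minkowski sums in the plane.

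First I would establish the value $r = h(\Om)^{-1}$ and the inclusion $E_r + B(0,r) \subset \Om$. By Theorem \ref{teo:convC11unique} the unique Cheeger set $E$ is convex and of class $C^{1,1}$, and by Proposition \ref{prop:CheegerGenProp}(iv) its reduced boundary inside $\Om$ has constant curvature $h(\Om)$; by Proposition \ref{prop:CheegerGenProp}(vii) the free part of $\de E$ meets $\de \Om$ tangentially. In the plane, a convex $C^{1,1}$ set whose boundary curvature is everywhere $\le h(\Om) = r^{-1}$ is exactly an inner parallel body at distance $r$ of its outer parallel body, i.e.\ of the form $E_r + B(0,r)$; this is the classical structure of convex sets with bounded curvature (rolling-ball property). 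The inclusion $E_r + B(0,r) \subset \Om$ is immediate from the definition \eqref{innercheeger} of $E_r$, since each point of $E_r$ is the center of a ball $B(0,r)$ contained in $\Om$, and the union of these balls is $E_r + B(0,r)$.

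Next I would identify $E$ with the union of all radius-$r$ balls contained in $\Om$, equivalently with $E_r + B(0,r)$. One inclusion is the rolling-ball argument above: every boundary point of $E$ lies on some interior ball of radius $r$, so $E \subset E_r + B(0,r)$. For the reverse inclusion one checks that $F := E_r + B(0,r)$ satisfies $P(F)/|F| \le h(\Om)$, whence by minimality $F$ is itself Cheeger and, by uniqueness, $F = E$. Here one uses the Steiner-type formulas in the plane: if $E_r$ is a convex body then $|F| = |E_r| + r\,P(E_r) + \pi r^2$ and $P(F) = P(E_r) + 2\pi r$. Imposing $P(F) = h(\Om)|F| = r^{-1}|F|$ gives a relation which, after simplification, forces $|E_r| = \pi r^2$; conversely this same relation shows $P(F)/|F| = r^{-1} = h(\Om)$, so $F$ is Cheeger.

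The main obstacle is the rolling-ball structural claim — that a convex $C^{1,1}$ domain in $\R^2$ whose boundary curvature is bounded above by $r^{-1}$ is precisely the outer parallel body at distance $r$ of a convex set (its inner parallel set $E_r$), and that the Cheeger set's boundary does indeed have curvature $\le r^{-1}$ (not merely $=r^{-1}$ on the free boundary). The curvature bound on the \emph{free} boundary is exactly $h(\Om)$ by Proposition \ref{prop:CheegerGenProp}(iv); on the part of $\de E$ touching $\de\Om$ one must rule out curvature exceeding $r^{-1}$, which follows from the tangential contact in (vii) combined with an elementary comparison (a ball of radius $r$ internally tangent to $\de E$ there would have to poke outside $\Om$, contradicting $E \subset \Om$, were the curvature larger). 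Once the structural description $E = E_r + B(0,r)$ is in hand, the identity \eqref{KLformula} is the purely algebraic consequence of the Steiner formulas described above, and requires no further geometric input.
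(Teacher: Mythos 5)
Your two-inclusion skeleton and the final Steiner computation are in the spirit of what the paper indicates (it gives no proof, only the remark that the argument of \cite{StrZie1997,KawLac2006} rests on Steiner's formulae \eqref{Asteiner}--\eqref{Psteiner}), but the step carrying all the weight --- the reverse inclusion $E\supseteq E_{r}+B(0,r)$ --- is argued circularly in your proposal. You say one ``checks'' that $F:=E_{r}+B(0,r)$ satisfies $P(F)/|F|\le h(\Om)$, but your verification consists of \emph{imposing} $P(F)=h(\Om)|F|$, extracting $|E_{r}|=\pi r^{2}$ from Steiner, and then reading the same identity backwards to conclude that $F$ is Cheeger. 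By Steiner's formulae, $P(F)/|F|\le 1/r$ is \emph{equivalent} to $|E_{r}|\ge \pi r^{2}$, i.e.\ to half of the conclusion \eqref{KLformula} you are trying to prove; nothing in the loop establishes it. In fact the easy direction goes the other way: using $F\subset\Om$ as a competitor in \eqref{Cheegerconst} together with Steiner gives $|E_{r}|\le\pi r^{2}$. The inequality $|E_{r}|\ge\pi r^{2}$ --- equivalently, that every ball of radius $r$ contained in $\Om$ lies inside the Cheeger set, so that the inner parallel body of $E$ really is $E_{r}$ and not something smaller --- is precisely the nontrivial content of the theorem, and it is exactly where convexity of $\Om$ must enter in an essential way: Example \ref{ex:cheegerinunion} shows that for non-convex domains the union of all radius-$r$ balls can be strictly larger than the Cheeger set, so no purely formal Steiner manipulation can yield it.

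Within this paper's toolkit the gap can be closed via Theorem \ref{teo:movingball}: by Theorem \ref{teo:convC11unique} the Cheeger set is unique, hence maximal, and by your rolling-ball discussion it contains some ball $B_{r}(x_{0})$; for any other ball $B_{r}(y)\subset\Om$ the straight path of centers $\gamma(t)=(1-t)x_{0}+ty$ has zero curvature and satisfies $B_{r}(\gamma(t))=(1-t)B_{r}(x_{0})+tB_{r}(y)\subset\Om$ by convexity, so the moving-ball theorem gives $B_{r}(y)\subset E$, hence $E\supseteq E_{r}+B(0,r)$; only now does Steiner applied to $E=E_{r}+B(0,r)$ together with $P(E)=h(\Om)|E|$ yield \eqref{KLformula} non-circularly. (The original route of \cite{KawLac2006} instead studies $t\mapsto |\{x\in\Om:\ \dist(x,\de\Om)>t\}|-\pi t^{2}$ and identifies its unique zero with $h(\Om)^{-1}$.) A secondary soft spot: your justification of the curvature bound $\le 1/r$ on $\de E\cap\de\Om$ (``the ball would poke outside $\Om$'') assumes the interior tangent ball you are trying to produce; the clean argument is that $E$ is convex and self-Cheeger, hence calibrable, and the characterization quoted after Theorem \ref{teo:convC11unique} bounds the curvature of $\de E$ by $P(E)/|E|=h(\Om)$, after which your rolling-ball step $E\subset E_{r}+B(0,r)$ is fine.
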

The proof of Theorem \ref{teo:kawlac} is essentially based on \textit{Steiner's formulae} for area and perimeter of tubular neighbourhoods of convex sets in the plane (\cite{Steiner1840}): if $A\subset \R^{2}$ is a bounded convex set and $\rho>0$, then setting $A_{\rho} = A + B_{\rho}$ we have
\begin{align}
\label{Asteiner}
|A_{\rho}| &= |A| + \rho\, P(A) + \pi \rho^{2},\\ 
\label{Psteiner}
P(A_{\rho}) &= P(A) + 2\pi \rho\,.
\end{align}
 We recall that Steiner's formula \eqref{Asteiner} has been generalized by Weyl to $n$ dimensional domains with boundary of class $C^{2}$ (the so-called tube formula, see \cite{Weyl1939}) and then by Federer \cite{Federer1959} under the assumption of \textit{positive reach}, that we introduce hereafter. Given $K\subset \R^{n}$ compact, we define the \textit{reach} of $K$ as
\begin{align*}
 \reach(K) = \sup\{\e\ge 0:\ &\dist(x,K)\le \e\ \impl\ \text{$x$ has a unique}\\ & \text{projection onto $K$}\}\,.
\end{align*}
We say that $K$ has positive reach if $\reach(K)>0$. Notice that if $K$ is convex, then $\reach(K) = +\infty$. It is convenient to introduce the \textit{outer Minkowski content} of an open bounded set $A$, defined as
\[
\mathcal M(A) = \lim_{\rho\to 0}\frac{|A_{\rho}|-|A|}{\rho}\,,
\]
provided that the limit exists. 
Then we have the following result (see \cite{LeoPra2014}).
\begin{prop}
Let $A\subset \R^{2}$ be a bounded open set with Lipschitz boundary. Let us assume that $\reach(\overline{A})>0$. Then $P(A)<+\infty$ and Steiner's formulae \eqref{Asteiner}, \eqref{Psteiner} hold for all $0<\rho<\reach(\overline{A})$.  
\end{prop}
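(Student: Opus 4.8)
The plan is to treat the two assertions separately. The finiteness $P(A)<+\infty$ has nothing to do with the reach hypothesis: any bounded open set with Lipschitz boundary satisfies $P(A)=\Hau^{1}(\de A)<+\infty$, so this is immediate. The substance is in Steiner's formulae, and for these I would reproduce the proof of the classical convex case, with positive reach playing the role of convexity, by slicing the tube around $\de A$ with the level sets of $d(x):=\dist(x,\overline A)$.

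Concretely, set $R:=\reach(\overline A)$, so that for $0<\rho<R$ one has $A_{\rho}=A+B_{\rho}=\{d<\rho\}$ and, since a Lipschitz boundary is Lebesgue-null, $|A_{\rho}|=|A|+|\{0<d<\rho\}|$. First I would invoke the structure theory of sets of positive reach: on the open set $\{0<d<R\}$ the distance function $d$ is of class $C^{1,1}$ with $|\nabla d|\equiv1$; since $\de A$ is a compact topological $1$-manifold (being locally a Lipschitz graph) and $\overline A$ has positive reach, $\de A$ is a finite disjoint union of $C^{1,1}$ Jordan curves, the outer unit normal $\nu_{A}$ is single-valued and Lipschitz on $\de A$, and for $0<t<R$ the normal map $y\mapsto y+t\,\nu_{A}(y)$ is an injective bi-Lipschitz parametrization of $\de A_{t}=\{d=t\}$. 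Then the coarea formula for $d$ (recall $|\nabla d|\equiv1$) gives $|\{0<d<\rho\}|=\int_{0}^{\rho}\Hau^{1}(\{d=t\})\,dt=\int_{0}^{\rho}P(A_{t})\,dt$, while the area formula applied to the injective Lipschitz map $y\mapsto y+t\,\nu_{A}(y)$ on the $C^{1,1}$ curve $\de A$ gives $P(A_{t})=\int_{\de A}(1+t\,\kappa)\,d\Hau^{1}$, where $\kappa\in L^{\infty}(\de A)$ is the signed curvature of $\de A$ normalized so that $1+t\,\kappa$ is the tangential Jacobian of that map (thus $\kappa\equiv1/r$ on a disc of radius $r$); the integrand is nonnegative for $t<R$, since the focal distance along each normal ray is at least $R$. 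Hence $P(A_{t})=P(A)+t\int_{\de A}\kappa\,d\Hau^{1}$, and integrating in $t$ reduces both \eqref{Psteiner} and \eqref{Asteiner} to the single identity $\int_{\de A}\kappa\,d\Hau^{1}=2\pi$.

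That identity is the one genuinely delicate point, and it is where the global topology of $\overline A$ enters. By the Gauss--Bonnet theorem for the flat compact surface-with-boundary $\overline A$ --- equivalently, by Hopf's Umlaufsatz, extended from $C^{2}$ to $C^{1,1}$ Jordan curves by smoothing --- one has $\int_{\de A}\kappa\,d\Hau^{1}=2\pi\,\chi(\overline A)$. Thus the argument delivers $P(A_{t})=P(A)+2\pi t$ and $|A_{\rho}|=|A|+\rho\,P(A)+\pi\rho^{2}$ precisely when $\chi(\overline A)=1$, i.e.\ when $\overline A$ is homeomorphic to a closed disc; this is automatic if $A$ is simply connected, in particular convex, and in what follows it should be understood as part of the hypotheses. (The reach hypothesis by itself is not sufficient: for a planar annulus $\overline A$ has positive reach yet $\chi(\overline A)=0$, the tube $\{0<d<\rho\}$ contributes only $\rho\,P(A)$, and in general the $\rho$- and $\rho^{2}$-terms carry the factor $\chi(\overline A)$.) The remaining work is routine though not entirely trivial: one must derive the $C^{1,1}$-regularity of $\de A$ --- hence the a.e.\ defined bounded curvature $\kappa$ and the validity of the coarea and area-formula computations --- from the combination of Lipschitz boundary and positive reach of $\overline A$; and one should record that $\rho<R$ is sharp, as distinct normal rays may meet once $\rho\ge R$, destroying the injectivity of the normal map and the polynomial form of $|A_{\rho}|$ and $P(A_{\rho})$.
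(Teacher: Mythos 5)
The survey states this proposition without proof (it defers to \cite{LeoPra2014}), so your argument has to stand on its own, and its central structural step is false: Lipschitz boundary together with $\reach(\overline A)>0$ does \emph{not} imply that $\de A$ is a finite union of $C^{1,1}$ Jordan curves with a single-valued Lipschitz outer normal. Any convex polygon (the unit square, say, with $\reach(\overline A)=+\infty$) is a counterexample: positive reach gives a uniform \emph{exterior} ball condition, hence excludes reentrant corners, but gives no interior ball condition, so convex corners survive. Consequently the normal map $y\mapsto y+t\,\nu_A(y)$ does not parametrize $\{d=t\}$ --- that level set also contains circular arcs centred at the corners --- and your identity $P(A_t)=\int_{\de A}(1+t\kappa)\,d\Hau^1$ with $\kappa\in L^\infty$ already fails for the square, where it would give $P(A_t)=4$ instead of $4+2\pi t$. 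The missing contributions are precisely the atoms of the turning (curvature) measure at the corners, and Gauss--Bonnet must be invoked with them included; the standard way to do the bookkeeping is Federer's normal bundle and curvature measures for sets of positive reach (tube formula plus $\Phi_0=\chi$), or, more elementarily, to run your computation on $A_\e=\{d<\e\}$, whose boundary genuinely is $C^{1,1}$ (interior balls of radius $\e$, exterior balls of radius $\reach(\overline A)-\e$ at every boundary point), and then let $\e\to 0$ using that for Lipschitz sets $|A_\e|\to|A|$ and $P(A_\e)\to P(A)$. So your skeleton (coarea in the tube, linear growth of $P(A_t)$ in $t$, Gauss--Bonnet) is the right one, but the step you dismiss as ``routine'' at the end is exactly where the work lies, and as written it is a genuine gap.

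Your topological remark, on the other hand, is correct and worth making: with only the hypotheses as literally stated, \eqref{Asteiner} and \eqref{Psteiner} cannot hold, since a round annulus has smooth boundary and reach equal to the radius of its hole, yet $|A_\rho|$ has no $\rho^2$ term and $P(A_\rho)$ is constant in $\rho$; in general those terms carry the factor $\chi(\overline A)$. The statement is thus to be read with $\overline A$ connected and simply connected (i.e.\ $\chi(\overline A)=1$), which is the situation in which the paper uses it --- the inner Cheeger set $E_r$ of a long strip in Theorem \ref{teo:unionedipalle2} is of this type --- and the precise hypotheses are those of \cite{LeoPra2014}. Just be clear that what you propose is a proof (modulo the gap above) of that corrected statement, not of the literal one.
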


\begin{oss}\rm
To see how the inner Cheeger formula \eqref{KLformula} can be used to derive information on the Cheeger problem for convex planar domains, we compute the Cheeger constant of a unit square $Q = (0,1)^{2}$. First, we observe that the inner Cheeger set of $Q$ is a concentric square of side length $1-2r$. Therefore \eqref{KLformula} becomes
\[
(1-2r)^{2} = \pi r^{2}\,,
\]
and by coupling this equation with the condition $1-2r>0$ we infer after some elementary computations that 
\[
r = \frac{1}{2+\sqrt{\pi}}\,,
\]
whence $h(Q) = 2+\sqrt{\pi}$. A general algorithm for computing the Cheeger constant of a convex polygon (with some extra property, i.e., that there is a one-to-one correspondence between the connected components of the boundary of the Cheeger set in the interior of the polygon and the vertices of the polygon) can be found in \cite{KawLac2006}.
\end{oss}

\subsection{Some further results about Cheeger sets in $\R^{2}$}
Let $E$ be a Cheeger set inside an open bounded domain $\Om\subset \R^{2}$, and set $r = h(\Om)^{-1}$ as before. Then a first, general fact is that a connected component of $\de E\cap \Om$ is an arc of radius $r$, that cannot be longer than $\pi r$ (i.e., it can be at most a half-circle).
\begin{lemma}[\cite{LeoPra2014}]\label{lemma:180}
Let $\de E\cap \Om$ be nonempty and let $S$ be one of its connected components. Then $S$ is an arc of circle of radius $r$, whose length does not exceed $\pi r$.
\end{lemma}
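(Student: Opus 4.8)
The plan is to combine the interior regularity of Cheeger sets with the geometry of the largest inscribed ball of radius $r$.

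\emph{Step 1 (the arc lies on a circle of radius $r$).} By Proposition~\ref{prop:CheegerGenProp}(iv) the set $E$ minimises the relative perimeter among sets of equal volume, so Theorem~\ref{teo:regolarita} applies. Since $n=2$, the singular set $(\partial E\setminus\partial^{*}E)\cap\Omega$ has Hausdorff dimension $\le n-8<0$ and is therefore empty, so $\partial E\cap\Omega=\partial^{*}E\cap\Omega$ is an analytic embedded $1$-manifold with constant curvature $\tfrac{h(\Omega)}{n-1}=h(\Omega)=\tfrac1r$ and curvature vector pointing towards $E$ (the sign being the isoperimetric one). A connected analytic planar curve of constant curvature $\tfrac1r$ lies on a circle $C=\partial B_{r}(p)$ of radius $r$; hence $S$ is a circular arc of radius $r$, and near $S$ the set $E$ lies on the side of $S$ containing $p$.

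\emph{Step 2 (the inscribed ball).} Suppose, for contradiction, that $S$ subtends at $p$ an angle $\theta>\pi$ (the full circle $\theta=2\pi$ being allowed). One first shows that $\overline B_{r}(p)\subseteq E$, and more generally that $E$ is the union of the closed radius-$r$ balls it contains, i.e. $E=E_{-r}\oplus B_{r}$ with $E_{-r}:=\{q:\overline B_{r}(q)\subseteq E\}$: along $S$ the outer normal of $E$ points away from $p$, so $\partial E$ cannot enter $B_{r}(p)$ (an arc of curvature $\tfrac1r$ meeting the interior of $B_{r}(p)$ would violate the curvature of $\partial E$, or would expose an inefficient piece of $E$ that could be pruned against the Cheeger property); the same osculating-ball argument at a general point of $\partial E\cap\Omega$ yields the structural identity. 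In particular $p\in E_{-r}$. The degenerate situation $E_{-r}=\{p\}$ is excluded: then $E=\overline B_{r}(p)$, whence $h(\Omega)=\tfrac{P(E)}{|E|}=\tfrac2r\neq\tfrac1r$, absurd; the remaining degeneracies (the component of $E_{-r}$ through $p$ being $\{p\}$ while $E_{-r}$ is larger, or tangencies of $\overline B_r(p)$ to other inscribed balls) are ruled out likewise, ultimately because $\overline B_r(p)$ would then be a Cheeger set.

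\emph{Step 3 (the half-circle bound).} Every $x\in S$ has $\dist(x,E_{-r})=r$, attained at $p$ (if it were $<r$, then $x$ would lie in some open ball $B_{r}(q)\subseteq\operatorname{int}E$, impossible as $x\in\partial E$); thus $p$ is a nearest point of $E_{-r}$ to every point of $S$. Since the connected component of $E_{-r}$ through $p$ is nondegenerate by Step~2, pick $q_{k}\in E_{-r}\setminus\{p\}$ with $q_{k}\to p$ and let $w$ be a subsequential limit of $(q_{k}-p)/|q_{k}-p|$. For any unit vector $v$ with $v\cdot w>0$, expanding $|p+rv-q_{k}|^{2}=r^{2}-2r|q_{k}-p|\,\langle v,\tfrac{q_{k}-p}{|q_{k}-p|}\rangle+|q_{k}-p|^{2}$ and letting $k\to\infty$ gives $\dist(p+rv,E_{-r})<r$, so $p+rv\notin S$. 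Hence $S\subseteq\{p+rv:\ v\cdot w\le0\}$, a half-circle, contradicting $\theta>\pi$. Therefore $\ell(S)\le\pi r$.

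The heart of the matter is Step~2 — the inscribed-ball/structure property $E=E_{-r}\oplus B_{r}$ together with the exclusion of the degenerate cases; once it is in hand Steps~1 and~3 are short, and the threshold $\pi r$ emerges exactly as the aperture of the half-plane that $p$ "sees across" $E_{-r}$.
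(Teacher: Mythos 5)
Step 1 is fine (in the plane the singular set is empty and constant curvature $h(\Om)=1/r$ forces each component to be a circular arc with $E$ locally on the side of the centre). The genuine gap is Step 2, which you yourself identify as the heart of the argument: for an \emph{arbitrary} bounded $\Om\subset\R^{2}$ you assert $\overline{B}_{r}(p)\subseteq E$ and the structural identity $E=E_{-r}\oplus B_{r}$, with only a heuristic justification. That identity is false in general, and the paper stresses exactly this point: the remark preceding Theorem \ref{teo:movingball} states that the internal-ball property of radius $r$ fails for general planar Cheeger sets, and Example \ref{bow-tie} exhibits a Cheeger set strictly larger than the union of all balls of radius $r$ contained in the domain, hence not of the form $E_{-r}\oplus B_{r}$. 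Even the weaker statement that the osculating ball of an arc stays inside the domain is precisely the ``arc-ball property'' (Lemma \ref{lemma1.1}), which in \cite{LeoPra2014} is proved only for strips of length $L\ge 9\pi/2$, by a delicate topological argument, and only yields containment in $\strip$, not in $E$; it cannot be assumed for a general $\Om$, let alone strengthened to containment in $E$. Your local justification is also incorrect as stated: a different component of $\partial E\cap\Om$ is an arc of \emph{another} circle of the same radius and can perfectly well enter $B_{r}(p)$ without ``violating the curvature'', as can a piece of $\partial E\cap\partial\Om$, and ``pruning an inefficient piece'' requires a quantitative comparison of ratios that is not supplied. The exclusion of the degenerate cases (e.g.\ ``$E_{-r}=\{p\}$ implies $E=\overline{B}_{r}(p)$'') again uses the false identity $E=E_{-r}\oplus B_{r}$.

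Step 3 is a correct and standard computation \emph{once} one knows $E=E_{-r}\oplus B_{r}$ with $p\in E_{-r}$ not isolated; this is essentially how the half-circle bound is read off in situations where the ball structure is available, namely convex domains and long strips (Theorems \ref{teo:kawlac} and \ref{teo:unionedipalle2}). But the lemma is stated for an arbitrary bounded planar $\Om$, where that structure fails, and in \cite{LeoPra2014} the logical order is the opposite: the half-circle bound is established first, by a direct comparison argument carried out locally along the arc using only the minimality of the ratio $P/|\cdot|$, and it is then used (together with Lemmas \ref{lemma1.1} and \ref{lemma1.2} and Theorem \ref{teo:movingball}) to obtain the ball structure for strips. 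So deriving the bound from the ball structure is both unavailable in general and circular with respect to the intended application. To repair the proof you need an argument on the arc itself that never invokes inscribed balls of radius $r$.
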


An apparently, very intuitive property of a planar Cheeger set $E$ could be the fact that $E$ satisfies an internal ball condition of radius $r = \frac{|E|}{P(E)}$ (i.e., that it is a union of balls of radius $r$). However, this property is false in general (see Figure \ref{fig:cheegerinunion} and, in particular, Example \ref{bow-tie}). Anyway, the following result holds true: as soon as a maximal Cheeger set $E$ in $\Om$ contains some ball $B_{r}(x_{0})$, one can roll this ball inside $\Om$ following any sufficiently smooth path of centers, and in doing so the moving ball will remain inside $E$.but without exiting from $E$.
\begin{teo}[Moving ball, \cite{LeoPra2014}]\label{teo:movingball}
Let $r=1/h(\Om)$ and let $E$ be a maximal Cheeger set in $\Om$ containing a ball $B_{r}(x_{0})$. Assume that there exists a curve $\gamma:[0,1]\to \Om$ of class $C^{1,1}$ and curvature bounded by $h(\Om)$, such that $x_{0} = \gamma(0)$ and $B_{r}(\gamma(t))\subset \Om$ for all $t\in [0,1]$. Then $B_{r}(\gamma(t))\subset E$ for all $t\in [0,1]$.
\end{teo}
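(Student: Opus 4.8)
\textbf{Proof plan for Theorem \ref{teo:movingball}.}

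The plan is to show that the open set $U := \{t \in [0,1] : B_r(\gamma(t)) \subset E\}$ is both open and closed in $[0,1]$ and nonempty, so that $U = [0,1]$ by connectedness. Nonemptiness is immediate since $0 \in U$ by hypothesis. Closedness is also easy: if $t_j \to t$ with $B_r(\gamma(t_j)) \subset E$ and $E$ is (taken to be, up to a Lebesgue-null modification) the closed representative, then since $\gamma$ is continuous the balls converge in measure to $B_r(\gamma(t))$, and $|B_r(\gamma(t)) \setminus E| = \lim_j |B_r(\gamma(t_j)) \setminus E| = 0$, so $B_r(\gamma(t)) \subset E$ up to null sets. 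The crux is therefore the openness of $U$: given $t_0$ with $B_r(\gamma(t_0)) \subset E$, I must produce $\delta>0$ so that $B_r(\gamma(t)) \subset E$ for $|t - t_0| < \delta$.

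For openness I would argue by contradiction, supposing there are $t_j \to t_0$ with $B_r(\gamma(t_j)) \not\subset E$, i.e.\ $|B_r(\gamma(t_j)) \setminus E| > 0$. The idea is to enlarge $E$ by adding a thin sliver of one of these balls and contradict maximality (or the Cheeger property). Concretely, consider the set $E' = E \cup B_r(\gamma(t_j))$ for $j$ large. Since $B_r(\gamma(t_j)) \subset \Om$, $E' \subset \Om$ is admissible, and since $B_r(\gamma(t_0)) \subset E$ with $\gamma$ of class $C^{1,1}$ and $|\gamma(t_j) - \gamma(t_0)|$ small, the symmetric difference region $B_r(\gamma(t_j)) \setminus E \subset B_r(\gamma(t_j)) \setminus B_r(\gamma(t_0))$ is contained in a lune of width $O(|t_j - t_0|)$; its volume is $O(|t_j-t_0|)$ and, more importantly, the portion of $\de^* E'$ created inside $\Om$ lies along $\de B_r(\gamma(t_j))$, a circle of radius exactly $r = 1/h(\Om)$. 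I would then estimate $P(E') - P(E)$ against $h(\Om)\,(|E'| - |E|)$: the new boundary arc on $\de B_r(\gamma(t_j))$ has curvature $h(\Om)$, which is precisely the borderline value, so adding this region changes perimeter and volume in a ratio $\le h(\Om)$ — with \emph{strict} inequality unless the sliver is empty. This would force $P(E')/|E'| < P(E)/|E| = h(\Om)$ unless $|B_r(\gamma(t_j)) \setminus E| = 0$, contradicting the assumption; here Lemma \ref{lemma:180} and the constant-curvature structure from Proposition \ref{prop:CheegerGenProp}(iv) control the geometry of $\de E$ where it meets $\de B_r(\gamma(t_j))$ inside $\Om$, and the tangency property (vii) handles the part of $\de B_r(\gamma(t_j))$ near $\de\Om$.

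The main obstacle is making the perimeter comparison $P(E \cup B_r(\gamma(t_j))) \le P(E) + h(\Om)|B_r(\gamma(t_j))\setminus E|$ rigorous at the level of sets of finite perimeter: one must show that replacing the part of $\de^* E$ lying strictly inside the ball $B_r(\gamma(t_j))$ by an arc of $\de B_r(\gamma(t_j))$ does not increase perimeter by more than $h(\Om)$ times the enclosed area. Since $\de B_r(\gamma(t_j))$ is a circle of the critical radius $r = h(\Om)^{-1}$, this is an isoperimetric-type statement comparing the perimeter of the region $B_r(\gamma(t_j)) \setminus E$ computed with its "outer" boundary on the circle versus its "inner" boundary on $\de E$; one expects the circular cap to be the perimeter-efficient way to cap off a region of given area inside a disc of radius $r$, precisely because the disc itself is self-Cheeger with constant $r^{-1}$. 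I would make this precise by a cut-and-paste argument: set $F = E \setminus B_r(\gamma(t_j))$, note $F \cup B_r(\gamma(t_j)) = E'$ and $F \cap B_r(\gamma(t_j)) = \emptyset$-ish only after intersecting, then use the submodularity \eqref{reticolo} together with the self-Cheeger property of the ball $B_r(\gamma(t_j))$ (itself Cheeger since it has perimeter-to-volume ratio $2/r = 2h(\Om) \ge h(\Om)$, and indeed the ball of radius $r$ realizes $P/|{\cdot}| = 2h(\Om)$... — so one needs the sharper local comparison rather than the global Cheeger bound) to conclude. After establishing openness, the topological argument closes the proof.
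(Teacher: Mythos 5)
Your skeleton (show $U=\{t: B_r(\gamma(t))\subset E\}$ is nonempty, closed and relatively open, with maximality absorbing the equality case) is reasonable, and the closedness step is fine. The gap is in the engine of the openness step: the claimed comparison $P(E\cup B_r(y))\le P(E)+h(\Om)\,|B_r(y)\setminus E|$ is not a soft isoperimetric fact — it is false precisely in the borderline geometry you are working in. Test it on the model configuration in which, inside $B_r(y)$, the set $E$ coincides with $B_r(x)$, $x=\gamma(t_0)$, $|x-y|=\eps=2rs$: the sliver is the full lune $B_r(y)\setminus B_r(x)$, its inner boundary is an arc of $\de B_r(x)$ of length $2r\arccos s<\pi r$ (allowed by Lemma \ref{lemma:180}, and curving the correct way for Proposition \ref{prop:CheegerGenProp}(iv)). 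A direct computation gives: perimeter added by the union $=2r\big(\pi-2\arccos s\big)=4r\arcsin s$, while $h(\Om)$ times the added area $=\frac1r\big(2r^2\arcsin s+2r^2 s\sqrt{1-s^2}\big)=2r\big(\arcsin s+s\sqrt{1-s^2}\big)$, and $\arcsin s>s\sqrt{1-s^2}$ for every $s>0$. So taking the union with the nearby ball \emph{strictly increases} the ratio above $h(\Om)$, and no contradiction is produced; the deficit is of order $s^3$, which is exactly why both your suggested fixes (submodularity \eqref{reticolo}, or the ball having ratio $2h(\Om)$) cannot work — both competing boundaries have the critical curvature $1/r$ and the comparison is decided at third order, against you.

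Hence "strict inequality unless the sliver is empty" is unjustified, and the whole content of the theorem is hidden there: one must \emph{exclude} configurations like the one above, not compare perimeters against them, and the local structure facts you invoke (Lemma \ref{lemma:180}, Proposition \ref{prop:CheegerGenProp}(iv) and (vii)) do not do this by themselves, since the bad arc has admissible length, curvature and orientation, with endpoints on $\de B_r(y)$ where tangential contact with $\de\Om$ is not immediately contradictory. It is also telling that your argument never uses the hypothesis that $\gamma$ has curvature bounded by $h(\Om)$, nor (except through the final contradiction) the fact that \emph{all} the balls $B_r(\gamma(t))$ lie in $\Om$; these hypotheses are not decorative. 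The paper itself gives no proof (it defers to \cite{LeoPra2014}), but Remark \ref{oss:movingball} indicates the actual mechanism: one shows that the moving ball can only meet a connected component of $\de E\cap\Om$ tangentially from the appropriate side and then derives a contradiction from the arc-of-radius-$r$, length-$\le\pi r$ structure of Lemma \ref{lemma:180} together with the global information carried by the path of balls (and maximality near $\de\Om$). Your local union-and-compare step would have to be replaced by an argument of that type.
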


\begin{oss}\label{oss:movingball}\rm
The requirement in Theorem \ref{teo:movingball} of maximality of $E$ can be dropped whenever the moving ball remains at a positive distance from $\de\Om$. In this case, one can prove by using Lemma \ref{lemma:180} that the moving ball will never intersect $\de E$. 
\end{oss}

\section{Characterization of Cheeger sets in planar strips}
\label{section:quattro} 
In \cite{KrePra2011}, D.~Krej\v{c}i\v{r}\'\i{k} and A.~Pratelli consider the Cheeger problem for a class of generically non-convex planar domains, called \textit{strips}. Let $\gamma:[0,L]\to \R^{2}$ be a curve of class $C^{1,1}$ parametrized by arc-length, such that the modulus of its curvature is bounded by $1$. For $t\in [0,L]$ we denote by $\sigma(t)$ the relatively open segment of length $2$ whose midpoint is $\gamma(t)$ and such that $\gp(t)$ is orthogonal to $\sigma(t)$. We also assume that $0\leq t_{1}<t_{2}\leq L$ implies $\sigma(t_{1}) \cap \sigma(t_{2}) = \emptyset$ (no-crossing condition). Then, the set 
\[
\strip = \Int\left(\bigcup_{t\in [0,L]} \sigma(t)\right)
\]
is an \textit{open strip} of width $2$ and length $L$ (here $\Int(A)$ denotes the set of interior points of $A$). We call $\gamma$ the \textit{spinal curve} of the strip $\strip$. If the no-crossing condition holds for all $t_{1}<t_{2}\in (0,L)$, but $\sigma(0) = \sigma(L)$, then we say that $\strip$ is a \textit{closed strip} (of course, this requires that the curve $\gamma$ is closed, too). In particular, an open strip is a $C^{1,1}$-diffeomorphic image of $(0,L)\times (-1,1)$, while a closed strip is a $C^{1,1}$-diffeomorphic image of $[0,L]\times (-1,1)$ with identification of points $(0,y)$ and $(L,y)$. More precisely we can take $(t,u)\in [0,L]\times (-1,1)$ and define the map
\[
\Psi(t,u) = \g(t) + u\ \nu(t)\,,
\]
where $\nu(t)$ denotes the counter-clockwise rotation of the unit vector $\gp(t)$ by $90$ degrees. In the following we shall focus on open strips, as the Cheeger problem for closed ones has been completely treated in \cite{KrePra2011}.  One can check that the map $\Psi$ defined above is a diffeomorphism of class $C^{1,1}$ between the rectangle $(0,L)\times (-1,1)$ and the (open) strip $\strip$. Using the $(t,u)$ coordinate system, i.e. the representation of a generic point $x$ of the strip by means of its coordinates $(t,u) = \Psi^{-1}(x)$, can sometimes be of help. 

\begin{figure}[ht]
  \centering
  \includegraphics[scale=0.9]{metap_all-0}
  \caption{A planar strip $\strip$.}
  \label{fig:strip}
\end{figure}

Up to a scaling, we can more generally define strips of width $2s$ (in this case we must require that the modulus of the curvature of $\gamma$ is smaller than $1/s$). Without loss of generality, we shall consider only strips of width $s=2$. Moreover, we shall also assume that the curvature of $\gamma$ is everywhere $<1$, as we can recover the case $\leq 1$ by approximation, owing to Theorem \ref{teo:hcontinua}. Strips naturally appear in spectral problems, as they model $2$-dimensional waveguides (see \cite{DucExn1995,KreKri2005}). In this sense, a number of interesting questions involve the spectral behaviour of a strip when its length $L$ becomes very large. In \cite{KrePra2011} the authors prove some results specifically on the Cheeger problem for strips. First, they show that closed strips are Cheeger in themselves. Then they prove by means of a suitable symmetrization technique the following bounds on the Cheeger constant of a strip (see Theorem 10 in \cite{KrePra2011}).
\begin{teo}[\cite{KrePra2011}]\label{teo:KrePra}
Let $\strip$ be a strip of length $L$ and width $2$. Then
\begin{equation}\label{stimaord1}
1 + \frac{1}{400\,L} \leq h(\strip) \leq 1 + \frac 2L\,.
\end{equation}
\end{teo}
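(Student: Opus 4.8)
The plan is to establish the two bounds in \eqref{stimaord1} separately, by constructing a good competitor for the upper bound and by a suitable symmetrization/rearrangement argument for the lower bound.

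\textbf{Upper bound.} For the inequality $h(\strip) \le 1 + \frac 2L$ I would simply test the Cheeger ratio on the whole strip $\strip$ itself. The strip $\strip$ has area $|\strip| = 2L$ (the diffeomorphism $\Psi$ has Jacobian $|1 - u\kappa(t)|$, and integrating $\int_0^L\int_{-1}^1 |1-u\kappa(t)|\,du\,dt$; when $|\kappa|<1$ the Jacobian is positive and the $u$-integral is exactly $2$, so $|\strip| = 2L$). The boundary of $\strip$ consists of the two ``long sides'' $\Psi(\cdot,\pm 1)$, each of which is a $C^{1,1}$ curve whose length, again by the Jacobian computation on the boundary, is $\int_0^L (1 \mp \kappa(t))\,dt = L \mp \int_0^L\kappa$, plus the two ``short ends'' $\sigma(0)$ and $\sigma(L)$, each of length $2$. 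Hence $P(\strip) = 2L + 4$, and therefore $\frac{P(\strip)}{|\strip|} = \frac{2L+4}{2L} = 1 + \frac 2L$, giving the desired upper bound. (One should note that $h(\strip)$ might be strictly smaller if $\strip$ is not self-Cheeger, which is exactly why only an inequality is claimed.)

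\textbf{Lower bound.} This is the genuinely hard part. The idea is to take any competitor $F\subset \strip$ with $|F|>0$ and to ``straighten'' it using the coordinates $(t,u) = \Psi^{-1}$, comparing with a competitor in the flat rectangle $R = (0,L)\times(-1,1)$. One shows that the straightening can only decrease the isoperimetric ratio in a controlled way, reducing the problem to proving a lower bound $h(R_{\text{weighted}}) \ge 1 + \frac{1}{400 L}$ for an appropriate one-dimensional weighted isoperimetric problem (after a further vertical symmetrization reducing $F$ to a subgraph $\{|u| < \varphi(t)\}$ of an even profile). The key quantitative input is that the perimeter of $F$ always exceeds the ``horizontal'' contribution $2|F|$ coming from the two long sides — since each horizontal slice $F\cap\sigma(t)$, having measure at most $2$, contributes at least its own measure to $P(F)$ via the two bounding values of $u$ — and then one must extract the extra $\frac{1}{400L}|F|$ from either (a) the curvature terms in the Jacobian, or (b) the contribution of the two short ends, or (c) the variation of the slice-length $\varphi(t)$. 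The constant $\frac{1}{400}$ is deliberately crude and comes out of estimating these competing effects against each other; getting the bookkeeping to close uniformly in $L$ is where the real work lies.

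\textbf{Main obstacle.} I expect the difficulty to be entirely in the lower bound, and specifically in controlling the interplay between the curvature of the spinal curve (which, through the Jacobian $|1-u\kappa(t)|$, distorts areas and can make a curved portion ``cheaper'' than a straight one) and the gain one hopes to harvest from the two end-segments. A competitor that is long and thin, hugging the spine and staying away from both ends, sees essentially the ratio $1$ from the long sides and must be forced to pay the extra $\Theta(1/L)$ by a second-order argument — presumably a Poincaré- or Wirtinger-type inequality for the profile $\varphi$ on $[0,L]$ combined with the fact that $\varphi$ must drop to $0$ (or the set must close up) somewhere, which costs perimeter at the ends. Making this robust against all competitors, including disconnected ones and ones concentrated near an end, and doing so with an explicit universal constant, is the technical heart of the proof; I would follow the symmetrization scheme of \cite{KrePra2011} closely here rather than attempt an independent route.
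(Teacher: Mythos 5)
Your upper bound is complete and correct: testing the Cheeger ratio on $\strip$ itself, with $|\strip|=2L$ (the Jacobian of $\Psi$ is $1-u\kappa(t)$ and the odd term integrates to zero across the width) and $P(\strip)\le 2L+4$ (the two lateral curves have total length $\int_0^L(1-\kappa)+\int_0^L(1+\kappa)=2L$, plus the two end segments of length $2$), gives $h(\strip)\le 1+\frac 2L$. Be aware, however, that the survey you are writing against does not prove Theorem \ref{teo:KrePra} at all: it quotes it from \cite{KrePra2011}, recording only that the proof goes through a symmetrization technique, so there is no in-paper argument to compare with; your upper-bound computation is precisely the natural one used in that reference.

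For the lower bound what you offer is a plan rather than a proof, and the gap is exactly the inequality being claimed. Two concrete points. First, your baseline estimate is misstated: the slicing bound (in the straightened coordinates) gives $P(F)\ \ge\ 2\,\Leb^{1}\bigl(\{t:\ \Hau^{1}(F\cap\sigma(t))>0\}\bigr)\ \ge\ |F|$, because each nonempty slice has length at most $2$ and its essential boundary contains at least two points; the assertion that ``$P(F)$ exceeds $2|F|$'' would give $h(\strip)\ge 2$, which is false. Moreover even the corrected inequality is exact only in a flat rectangle: in $\strip$ the area is weighted by $1-u\kappa(t)\in(0,2)$ while the perimeter is not weighted in the same way, and controlling this mismatch (a competitor hugging the concave side is genuinely ``cheaper'') is precisely what the quantitative symmetrization of \cite{KrePra2011} accomplishes; it is also where the crude constant $\frac1{400}$ comes from. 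Second, the mechanism you invoke to harvest the extra $\frac{1}{400L}|F|$ --- a Poincar\'e/Wirtinger-type estimate on the profile plus the cost of closing up at the ends, made uniform in $L$ and robust for disconnected competitors or ones concentrated near an end --- is the entire content of the theorem, and you explicitly defer it to \cite{KrePra2011}. As a self-contained argument the proposal therefore establishes only the right-hand inequality in \eqref{stimaord1}; the left-hand inequality remains cited, exactly as it is in the paper.
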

In \cite{LeoPra2014} we push forward the analysis done in \cite{KrePra2011} and, by means of a finer characterization of Cheeger sets inside open strips, we prove the following result.
\begin{teo}[\cite{LeoPra2014}]\label{teo:asintotico}
Let $\strip$ be an open strip of length $L\geq   \frac{9\pi}{2}$ and width $2$. Then 
\begin{align}\label{stripasymptotic}
h(\strip) = \left(1+ \frac{\pi}{2L} + O(L^{-2})\right)\qquad \text{as }L\to +\infty\,.
\end{align}
\end{teo}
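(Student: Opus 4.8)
The plan is to derive both the upper and the lower bound in \eqref{stripasymptotic} by a careful analysis of the Cheeger set $E$ of the strip $\strip$, using the characterization results of the previous section (Lemma \ref{lemma:180} and Theorem \ref{teo:movingball}). The rough picture is the following: the Cheeger set $E$ should be obtained from $\strip$ by ``rounding off'' its two ends, replacing the two flat ends of the strip by circular caps of radius $r = h(\strip)^{-1}$, while leaving the long central part of the strip essentially untouched (since a ball of radius slightly below $1$ rolls freely along the spinal curve, by Remark \ref{oss:movingball} and the curvature bound). If this is so, then $E$ is, up to small corrections, the strip with two half-disc-like caps of radius $r\approx 1$ removed from each end; computing $P(E)/|E|$ for this model set and imposing that it equals $h(\strip)$ produces the asserted expansion.

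The first step is to make the geometry of $E$ near the two ends precise. By Theorem \ref{teo:movingball} (in the form of Remark \ref{oss:movingball}) a ball of radius $r<1$ centered on the spinal curve and at distance $>r$ from $\partial\strip$ lies inside $E$; hence $E$ contains the ``inner strip'' $\{\Psi(t,u): \delta\le t\le L-\delta,\ |u|\le 1-r\}$-type region for an appropriate small $\delta$, and the only freedom is in two end regions of bounded length. On each such end region, by Lemma \ref{lemma:180} the free boundary $\partial E\cap\strip$ consists of circular arcs of radius $r$ of length at most $\pi r$; by De Giorgi's structure theorem and Proposition \ref{prop:CheegerGenProp}(vii) these arcs must meet $\partial^*\strip$ tangentially. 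An elementary but careful case analysis of how finitely many such arcs can close up inside a region of width $2$ forces the end of $E$ to be (essentially) a single half-disc-shaped cap, with the two straight sides of the strip joined to a circular arc of radius $r$ tangentially; the length removed from the strip at each end is then $\ell(r) = r + O(r-1)$-type quantity, and the perimeter contributed by each cap is an arc of length $\pi r$ (minus the two straight segments of total length $2r$ that have been removed), up to curvature corrections of order $L^{-1}$ coming from the fact that the spinal curve is only $C^{1,1}$ with curvature $<1$ rather than a straight line.

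The second step is the bookkeeping. Writing $r = h(\strip)^{-1}$ and using Steiner-type / tube-formula estimates (the strip is a $C^{1,1}$-diffeomorphic image of a rectangle, and over the long middle part the area and boundary lengths differ from the flat model by $O(1)$ terms uniformly in $L$), one gets
\[
|E| = 2L - c_{0} + O(L^{-1}), \qquad P(E) = 2L + (\pi - 2)\,r\cdot 2 - c_{1} + O(L^{-1}),
\]
for explicit constants $c_0,c_1$ coming from the two caps (here the factor $2$ multiplying $(\pi-2)r$ accounts for the two ends). Imposing $P(E) = h(\strip)|E| = |E|/r$ and solving for $r$, one finds $r = 1 - \tfrac{\pi}{2L} + O(L^{-2})$, hence $h(\strip) = 1 + \tfrac{\pi}{2L} + O(L^{-2})$. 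The hypothesis $L\ge \tfrac{9\pi}{2}$ enters precisely to guarantee that the middle part of the strip is long enough for the ball-rolling argument to pin down the shape of $E$ at each end independently (the two caps do not interact, and $E$ is connected with exactly one cap per end). The upper bound alternatively follows immediately from Theorem \ref{teo:KrePra}, but the refined shape analysis is needed for the matching lower bound and for the exact constant $\pi/2$.

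The main obstacle, I expect, is the shape analysis at the ends: showing rigorously that the end of the Cheeger set is a single circular cap meeting the sides tangentially, and ruling out more complicated configurations of several short arcs (or arcs attached to interior points of $\partial\strip$), using only Lemma \ref{lemma:180}, the tangency condition, and the curvature constraint on $\gamma$. A secondary technical point is controlling the discrepancy between the curved strip and the flat model uniformly in $L$ — i.e.\ checking that all the geometric quantities ($|E|$, $P(E)$, the cap lengths) agree with their straight-strip counterparts up to $O(L^{-1})$, which is where the $C^{1,1}$ regularity and the strict curvature bound $<1$ are used; this should follow from the properties of the diffeomorphism $\Psi$ and a Steiner/tube-formula computation, but it must be done carefully to be sure the error does not degrade the $O(L^{-2})$ in \eqref{stripasymptotic}.
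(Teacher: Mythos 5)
The heuristic picture you describe (strip with its two ends rounded off by tangent arcs of radius $r$, then solve $P(E)=|E|/r$ for $r$) is the right one and does produce the constant $\pi/2$, but as a proof your proposal has a genuine gap at exactly the point you yourself flag as the ``main obstacle'': the classification of the ends of $E$ as single circular caps meeting $\de\strip$ tangentially. This is not an elementary case analysis; it is the core content of Theorem \ref{teo:unionedipalle2}, whose proof in \cite{LeoPra2014} requires the arc-ball and ball-to-ball lemmas (Lemmas \ref{lemma1.1} and \ref{lemma1.2}), the moving ball Theorem \ref{teo:movingball}, and nontrivial topological arguments. The paper's proof of Theorem \ref{teo:asintotico} does not redo any of this: it simply invokes Theorem \ref{teo:unionedipalle2}, i.e.\ $E=E_{r}+B_{r}$ together with the inner Cheeger formula \eqref{ICformula}, and then uses the elementary sandwich $2(L-9\pi)(1-r)\leq |E_{r}|\leq 2L(1-r)$ (the inner Cheeger set sits in the region $|u|\leq 1-r$, whose transversal measure per unit length is exactly $2(1-r)$ because the curvature term in the Jacobian $1-u\kappa(t)$ integrates to zero in $u$); combining with $|E_{r}|=\pi r^{2}$ gives $1-r=\frac{\pi}{2L}+O(L^{-2})$ and hence \eqref{stripasymptotic}. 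Your route bypasses both Theorem \ref{teo:unionedipalle2} and \eqref{ICformula} and proposes to re-derive the shape of $E$ from scratch, so the hardest part of the argument is left unproved.

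There is also a quantitative soft spot in your bookkeeping. The curvature of the spinal curve near the two ends is of order one, not $O(L^{-1})$, so the claim that the end caps agree with the flat model ``up to curvature corrections of order $L^{-1}$'' is unjustified; the $O(1)$ constants $c_{0},c_{1}$ in $|E|$ and $P(E)$ genuinely depend on the local geometry of $\strip$ near $\sigma(0)$ and $\sigma(L)$, and the coefficient $\pi/2$ emerges only because these curvature-dependent contributions cancel exactly in the combination $P(E)-|E|/r$. That cancellation is precisely what Steiner's formulae \eqref{Asteiner}--\eqref{Psteiner} for the Minkowski sum $E=E_{r}+B_{r}$ encode (this is how \eqref{ICformula} is derived, using the positive reach of $E_{r}$ and the explicit decomposition of $E\setminus E_{r}$); asserting it without that structure leaves the leading coefficient unverified, even though $O(L^{-1})$ errors in the constants would indeed be harmless for the final $O(L^{-2})$. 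Finally, the upper bound does not ``follow immediately from Theorem \ref{teo:KrePra}'': the bound $h(\strip)\leq 1+\frac{2}{L}$ is too weak for \eqref{stripasymptotic}, since $2>\pi/2$. So either carry out the end-cap classification and the Steiner-type cancellation in full (essentially reproving Theorem \ref{teo:unionedipalle2}), or argue as the paper does and deduce \eqref{stripasymptotic} directly from Theorem \ref{teo:unionedipalle2} and \eqref{ICformula}.
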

The asymptotic estimate \eqref{stripasymptotic} is optimal. Its derivation is based on a key result proved in \cite{LeoPra2014}. This result (Theorem \ref{teo:unionedipalle2} recalled below) essentially shows that, concerning the Cheeger problem, strips are not too different from convex domains. 
\begin{teo}[\cite{LeoPra2014}]\label{teo:unionedipalle2}
Let $\strip$ be an open strip of length $L\geq 9\pi/2$, and let $r=h(\strip)^{-1}$. Assume $E$ is a Cheeger set of $\strip$. Then there exists two continuous functions $\rho^{+},\rho^{-}:[0,L]\to [-1,1]$ such that 
\begin{equation}\label{intergrafico}
E = \Psi\left(\{(t,s):\ 0< t< L,\ \rho^{-}(t)<s<\rho^{+}(t)\}\right)\,.
\end{equation}
Moreover, $E$ is unique and coincides with the union of all balls of radius $r$ contained in $\strip$, it is simply connected and can be obtained as the Minkowski sum $E = E_{r}+B_{r}$, where
\[
E_{r} = \{x\in \strip:\ \dist(x,\de\strip)\geq r\}
\]
is a set with Lipschitz boundary and positive reach $\reach(E_{r})\geq r$. Finally, the inner Cheeger formula
\begin{equation}\label{ICformula}
|E_{r}| = \pi r^{2}
\end{equation}
holds true.
\end{teo}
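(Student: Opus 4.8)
The plan is to mimic the convex-domain argument behind Theorem \ref{teo:kawlac}: first extract from the regularity theory the geometric and topological structure of an arbitrary Cheeger set $E$, then upgrade it to the ball decomposition $E=E_{r}+B_{r}$, and finally deduce \eqref{ICformula} by the same Steiner-type computation used in the convex case. As structural preliminaries, I would record that by Theorem \ref{teo:KrePra} one has $h(\strip)>1$, hence $r=h(\strip)^{-1}<1$, so that a closed disc of radius $r$ fits strictly across the width-$2$ strip, while $L\ge 9\pi/2$ keeps $r$ bounded away from $0$. By Proposition \ref{prop:CheegerGenProp}(iv) and Theorem \ref{teo:regolarita}, $\de E\cap\strip=\de^{*}E\cap\strip$ is a finite disjoint union of circular arcs of curvature $1/r$, each with curvature vector pointing towards $E$ (as for every perimeter minimizer under a volume constraint), and by Lemma \ref{lemma:180} each arc has length at most $\pi r$; by Proposition \ref{prop:CheegerGenProp}(vii) the rest of $\de E$ lies on the two long sides $\Psi((0,L)\times\{\pm1\})$ of $\strip$ and meets them tangentially. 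Moreover, no connected component of $\strip\setminus E$ is relatively compact in $\strip$, since filling one would strictly raise the volume without raising the perimeter, contradicting the minimality of $P(E)/|E|=h(\strip)$.

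The first real step is the graph representation \eqref{intergrafico}. Working in the $(t,u)$ coordinates given by $\Psi$, I would prove that $E\cap\sigma(t)$ is a (possibly empty) open segment for every $t\in(0,L)$, and that its endpoints $\rho^{-}(t)\le\rho^{+}(t)$ depend continuously on $t$; this gives continuous functions $\rho^{\pm}:[0,L]\to[-1,1]$, the representation \eqref{intergrafico}, and the fact that $E$ is connected and simply connected. Excluding a disconnected slice is the technical heart of the matter: one would combine the length bound of Lemma \ref{lemma:180}, the fact that every interior boundary arc curves towards $E$ (so that $E$ locally contains a disc of radius $r$ straddling the arc), the tangential contact at $\de\strip$, and the absence of relatively compact complementary components, to show that a ``gap'' in the interior of a slice would force a configuration of radius-$r$ arcs that is incompatible, inside a width-$2$ strip with $r<1$, with the no-crossing condition and the curvature bound on $\g$. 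Continuity of $\rho^{\pm}$ then follows since $\de E$ is tangent to the long sides near $\de\strip$ and is a finite union of $C^{1}$ arcs of radius $r$ in the interior.

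Next comes the ball decomposition and uniqueness. Using the graph structure of \eqref{intergrafico} together with the fact that interior arcs curve towards $E$, one shows that every $x\in E$ lies in a disc of radius $r$ contained in $\strip$, i.e. $E\subseteq\mathcal B:=\bigcup\{B_{r}(y):B_{r}(y)\subset\strip\}=E_{r}+B_{r}$, where $E_{r}=\{x\in\strip:\dist(x,\de\strip)\ge r\}$. For the reverse inclusion, observe that by Proposition \ref{prop:CheegerGenProp}(v) we have $|E|\ge 4\pi r^{2}>0$, so $E$ contains at least one disc $B_{r}(x_{0})$; since $E_{r}$ is a long, thin neighbourhood of the spinal curve $\g$, any point of $E_{r}$ can be joined to $x_{0}$ by a $C^{1,1}$ path lying in $E_{r}$ with curvature $\le 1<h(\strip)$, and Theorem \ref{teo:movingball} (in the form of Remark \ref{oss:movingball} when $E$ is not assumed maximal) then forces $B_{r}(x)\subset E$ for all such $x$, whence $\mathcal B\subseteq E$ up to null sets. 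Thus every Cheeger set coincides with $\mathcal B$, which proves both uniqueness and $E=\mathcal B=E_{r}+B_{r}$; simple connectedness has already been noted, and $E_{r}$ has Lipschitz boundary with $\reach(E_{r})\ge r$ because it is the inner parallel set of $\strip$ at distance $r$, and $r<1$ lies below the relevant curvature scale of $\de\strip$ (the curvature bound on $\g$ entering once more).

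Finally, deducing \eqref{ICformula} is the routine part. Since $\reach(E_{r})\ge r$, Steiner's formulae \eqref{Asteiner} and \eqref{Psteiner} apply to $E_{r}$ for $0<\rho<r$, and letting $\rho\to r^{-}$ gives $|E|=|E_{r}|+r\,P(E_{r})+\pi r^{2}$ and $P(E)=P(E_{r})+2\pi r$; substituting these into the Cheeger identity $P(E)=h(\strip)\,|E|=|E|/r$ and cancelling the common $P(E_{r})$ term yields $|E_{r}|=\pi r^{2}$. I expect the main obstacle to be the slice-connectedness argument of the second step — and the closely related verifications in the third step that the discs straddling the interior arcs, and the discs being rolled along $E_{r}$, genuinely stay inside $\strip$. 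This is exactly where the specific geometry of strips (the no-crossing condition, the curvature bound on $\g$, and the hypothesis $L\ge 9\pi/2$) must be exploited; once the graph and ball descriptions are in hand, the concluding computation is formally identical to the convex case of Theorem \ref{teo:kawlac}.
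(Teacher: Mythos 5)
Your overall strategy (structure of $\de E$ from regularity, graph representation, identification with the union of radius-$r$ balls, then the Steiner computation) is the same as the paper's, and your final derivation of \eqref{ICformula} from $\reach(E_{r})\geq r$ and $P(E)=|E|/r$ is exactly the paper's concluding step. But the proposal has genuine gaps at precisely the points where the paper invokes its two key lemmas, which you never use. First, the deduction ``$|E|\geq 4\pi r^{2}$ by Proposition \ref{prop:CheegerGenProp}(v), so $E$ contains a disc $B_{r}(x_{0})$'' is a non sequitur: a volume lower bound never yields an inscribed ball of radius $r$ (a long thin set has large area and no such ball). That a Cheeger set of a long strip does contain a radius-$r$ ball, and more generally that the osculating discs of the arcs of $\de E\cap\strip$ lie inside $\strip$, is exactly the content of the arc-ball property (Lemma \ref{lemma1.1}), which is nontrivial and is where the hypothesis $L\geq 9\pi/2$ enters; your step ``one shows that every $x\in E$ lies in a disc of radius $r$ contained in $\strip$'' silently assumes it. Likewise, your claim that any point of $E_{r}$ can be joined to $x_{0}$ by a curvature-$\leq 1$ path inside $E_{r}$ ``since $E_{r}$ is a long, thin neighbourhood of the spinal curve'' is the ball-to-ball property (Lemma \ref{lemma1.2}), again asserted rather than proved. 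The paper's actual route is to combine Lemmas \ref{lemma1.1}, \ref{lemma1.2} and Theorem \ref{teo:movingball} to first pin down the four corner balls tangent to $\sigma(0),\sigma(L)$ and $\de^{\pm}\strip$, and only then to define $\rho^{\pm}$; your plan instead puts the slice-connectedness argument first and leaves it as a declared ``technical heart'' without an argument, so the representation \eqref{intergrafico} is not actually established.

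Two further specific steps would fail as written. Your use of Remark \ref{oss:movingball} to dispense with maximality of $E$ is not available here: the balls you must roll to prove $E_{r}+B_{r}\subseteq E$ are tangent to $\de\strip$ (that is the generic case, since $r$ is close to $1$), so the hypothesis that the moving ball stay at positive distance from $\de\strip$ is violated; in the paper uniqueness is obtained differently, by showing that \emph{every} Cheeger set admits the representation \eqref{intergrafico} and hence coincides with $U_{r}$. Finally, the justification of $\reach(E_{r})\geq r$ ``because $r<1$ lies below the relevant curvature scale of $\de\strip$'' is not correct: the lateral boundaries of a strip have curvature $\kappa/(1-\kappa)$, which is unbounded as the spinal curvature $\kappa$ approaches $1$, and an inner parallel set at distance $r$ need not have reach $\geq r$ in general (this is exactly what fails in Example \ref{loose bow-tie}). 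The paper proves the positive-reach and Lipschitz-boundary claims through the explicit decomposition of $E\setminus E_{r}$ into the rectangles $Q_{i}$, the sectors $D_{i,j}$ and the regions $R_{\pm}$, together with a unique-projection verification; some argument of this kind is needed before Steiner's formulae \eqref{Asteiner}--\eqref{Psteiner} can be applied.
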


\begin{oss}\rm
We stress that the conclusions of Theorem \ref{teo:unionedipalle2} (in particular, the fact that the Cheeger set $E$ is the union of balls of radius $r$ contained in $\strip$, and that \eqref{ICformula} holds true) are not satisfied by any planar domain. Two examples showing that no inclusion between Cheeger sets and unions of balls of radius $r$ is generally true, are given in the last section (Examples \ref{ex:cheegerinunion} and \ref{bow-tie}). Concerning the inner Cheeger formula \eqref{ICformula}, there exists a star-shaped domain whose Cheeger set is the union of all included balls of radius $r$, but for which the formula fails (see Example \ref{loose bow-tie}). 
\end{oss}

Theorem \ref{teo:asintotico} directly follows from Theorem \ref{teo:unionedipalle2}. Indeed, by the special geometric properties of $\strip$ we infer that
\[
2(L-9\pi) (1-r) \leq |E_{r}| \leq 2L(1-r)\,.
\]
By combining these two inequalities with the inner Cheeger formula \eqref{ICformula}, we finally get
\[
2(L-9\pi) (1-r) \leq \pi r^{2} \leq 2L(1-r)\,,
\]
which implies \eqref{stripasymptotic} by an elementary computation.

We synthetically present the main ideas and tools, which the proof of Theorem \ref{teo:unionedipalle2} is based on. Again, we refer the reader to \cite{LeoPra2014} for the details. 
We start recalling two key lemmas that are used in the proof of Theorem \ref{teo:unionedipalle2}. The first lemma states that, if $E$ is a Cheeger set in $\strip$, and the length of $\strip$ is large enough, then any osculating ball to $\de E\cap \strip$ is entirely contained in $\strip$ (see Figure \ref{fig:arcball}).
\begin{figure}[ht]
  \centering
  \includegraphics[scale=1]{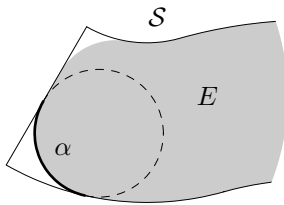}
  \caption{The arc-ball property.}
  \label{fig:arcball}
\end{figure}

\begin{lemma}[Arc-ball property]\label{lemma1.1}
Let $E$ be a Cheeger set inside a strip $\strip$ of length $L\geq \frac{9\pi}2$. Set $r = h(\strip)^{-1}$. Then $\de E\cap \strip$ is non-empty, and for any circular arc $\alpha$ contained in $\de E\cap \strip$ the unique ball $B_{r}$, such that $\alpha\subset \de B_{r}$, is contained in $\strip$.
\end{lemma}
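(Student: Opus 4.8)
The plan is to argue by contradiction: suppose that some circular arc $\alpha \subset \de E \cap \strip$ of radius $r$ has the property that the osculating ball $B_r$ (the ball bounded by the circle through $\alpha$, lying on the concave side of $\alpha$) is \emph{not} contained in $\strip$. By Proposition~\ref{prop:CheegerGenProp}(iv), $\de E \cap \strip$ is made of circular arcs of radius $r = h(\strip)^{-1}$, and by Lemma~\ref{lemma:180} each connected component of $\de E \cap \strip$ is an arc of length at most $\pi r$, so the curvature is oriented consistently (the ball $B_r$ sits on the side of $E$, by the interior mean-curvature condition for a Cheeger set). Since $B_r \not\subset \strip$, the ball $B_r$ must reach the long sides of the strip; in the $(t,u)$-coordinates given by $\Psi$, this forces the projection of $B_r$ onto the spinal direction to have length comparable to $r$, while the transverse extent of $\strip$ is only $2$. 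Because the curvature of $\gamma$ is bounded by $1 < 1/r$ would fail a priori, the first task is to get a lower bound $r \geq c$ for some universal constant — this comes from Theorem~\ref{teo:KrePra}, which gives $h(\strip) \leq 1 + 2/L$, hence $r = h(\strip)^{-1} \geq (1 + 2/L)^{-1} \geq \tfrac{L}{L+2} \geq \tfrac 12$ once $L \geq 2$; in particular, for $L \geq 9\pi/2$ we have $r \geq \frac{9\pi}{9\pi + 4} > \frac 23$.

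Next I would exploit the geometry: the ball $B_r$ of radius $r > 2/3$ that is tangent to $\de E$ along $\alpha$ but pokes out of the width-$2$ strip must, in fact, straddle the spinal curve and stick out of one of the two boundary lines $\Psi(\cdot, \pm 1)$. Consider then sliding or reflecting: the part of $\de B_r$ inside $\strip$ together with a portion of $\de\strip$ encloses a region that, compared with $E$, can be modified to \emph{strictly decrease} the ratio $P/|E|$ — contradicting that $E$ is Cheeger. Concretely, one replaces the arc $\alpha$ in $\de E$ by a competitor that ``cuts the corner'' where $B_r$ would exit the strip: one adds to (or removes from) $E$ a lens-shaped region bounded by part of $\de B_r$ and part of $\de\strip$. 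Because $\de E \cap \strip$ meets $\de\strip$ tangentially (Proposition~\ref{prop:CheegerGenProp}(vii)) and the boundary of $\strip$ has curvature bounded by $1 < 1/r$, the outward-pushed ball $B_r$ creates a region whose perimeter-to-volume balance is strictly better than $h(\strip)$; quantitatively, adding a region $\Delta$ along $\de\strip$ changes $P(E)$ by $P(\Delta; \strip) - \Hau^1(\de\strip \cap \de\Delta)$ and $|E|$ by $|\Delta|$, and the tangential-contact plus curvature bound makes the net effect lower the Cheeger ratio.

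The cleanest way to run the contradiction, and the one I would actually write up, is the following. Let $B_r$ be the osculating ball to $\alpha$; it lies on the side of $E$. If $B_r \not\subset \strip$, then $B_r$ exits through one of the boundary arcs of $\de\strip$, say through $\Psi([a,b], 1)$. Since $\de E \cap \strip$ is tangent to $\de\strip$ at its endpoints and $r < 1$ would be needed for $B_r$ to fit transversally, one checks that $B_r$ in fact must ``wrap around'' — but $B_r$ has radius $r > 2/3$ and the strip has half-width $1$, so the center of $B_r$ lies within distance $1 - r < 1/3$ of one boundary line, which is incompatible with $\alpha \subset \de E \cap \strip$ being an arc of the \emph{opposite} curvature that fits inside the width-$2$ corridor unless the spinal curve bends enough — and the curvature bound $|\curv_\gamma| < 1$ caps how much it can bend over the relevant arc-length $\leq \pi r < \pi$. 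Tracking these two competing constraints (strip geometry with $|\curv_\gamma|<1$ versus a ball of radius $r > 2/3$) forces $L$ to be bounded, contradicting $L \geq 9\pi/2$; the threshold $9\pi/2$ is exactly what makes the arithmetic close.

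\textbf{Main obstacle.} The delicate point is the last step: converting ``$B_r$ sticks out of the width-$2$ strip'' into a genuine contradiction. One must carefully use the no-crossing condition on the segments $\sigma(t)$, the curvature bound on $\gamma$, and the tangential contact of $\de E$ with $\de\strip$ simultaneously, and the bookkeeping of how far a radius-$r$ arc can travel inside a bent corridor of width $2$ before it is forced to hit a wall is where all the constants (the $9\pi/2$, the factor $400$ in Theorem~\ref{teo:KrePra}, etc.) are consumed. I expect the proof in \cite{LeoPra2014} to handle this via an explicit competitor computation rather than purely geometric slicing, so the honest write-up defers the metric estimates to that reference; here I would only indicate the contradiction scheme and the role of the lower bound $r > 2/3$ extracted from Theorem~\ref{teo:KrePra}.
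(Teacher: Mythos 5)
Your preliminary observations are fine (the free boundary consists of arcs of radius $r$ by Proposition~\ref{prop:CheegerGenProp}(iv), each component has length at most $\pi r$ by Lemma~\ref{lemma:180}, and Theorem~\ref{teo:KrePra} gives $\tfrac{L}{L+2}\leq r<1$, so $r$ is close to $1$ when $L\geq 9\pi/2$), but they are only the setup. The actual content of Lemma~\ref{lemma1.1} is precisely the implication you never establish: that $B_{r}\not\subset\strip$ leads to a contradiction with the minimality of $E$. You offer two alternative mechanisms — a ``corner-cutting'' competitor whose perimeter/area bookkeeping is not carried out (and whose sign you do not verify: you never show the modified set has ratio strictly below $h(\strip)$), and a ``tracking of constraints'' argument claimed to force $L$ to be bounded, which is asserted without any computation and is in any case dubious, since a failure of the arc-ball property is a local phenomenon near a bend of the strip and is not obviously in tension with $L$ being large — and then you explicitly concede in your ``Main obstacle'' paragraph that this step is deferred to \cite{LeoPra2014}. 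That is a genuine gap, not a technicality: it is the whole lemma. Two further omissions: the statement also asserts $\de E\cap\strip\neq\emptyset$ (i.e.\ the open strip is not self-Cheeger), which you never address, and your case analysis assumes the ball can only exit through the long sides of $\strip$, ignoring the possibility that it exits through an end $\sigma(0)$ or $\sigma(L)$.

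For comparison, this overview paper itself does not prove the lemma either; it defers to \cite{LeoPra2014} and only describes the nature of the proof there: the difficulties are \emph{topological} rather than metric, and the argument proceeds by excluding, one by one, a-posteriori impossible configurations of $\de\strip$ and of internal balls (e.g.\ a ball of radius $\e<r$ tangent to $\de^{+}\strip$ at more than one point, or two distinct radius-$r$ balls centered on $\g$ both tangent to $\sigma(0)$), using the no-crossing structure of the segments $\sigma(t)$. Your sketch is pitched entirely at the level of curvature bookkeeping and a single competitor construction, and does not engage with these structural/topological exclusions at all, so even as a plan it does not match where the real work lies. To make progress you would need, at minimum, to (a) prove non-self-Cheegerness of the open strip (rounding the four corners at $\sigma(0),\sigma(L)$ strictly improves the ratio), and (b) for the arc-ball step, set up a precise case analysis of how $B_{r}$ could exit $\strip$ (through $\de^{\pm}\strip$ or through an end) and in each case exhibit either a contradiction with the no-crossing condition and $|\curv_{\g}|<1$, or an explicit admissible competitor with ratio strictly less than $h(\strip)$.
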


The second lemma establishes a \textit{ball-to-ball} connectivity property of a generic strip, that is, the possibility of connecting two balls of radius $r$ that are contained in $\strip$ by moving one of them towards the other, following a suitable path of centers with controlled curvature and preserving the inclusion in $\strip$ (see Figure \ref{fig:ball2ball}).
\begin{figure}[ht]
  \centering
  \includegraphics[scale=1]{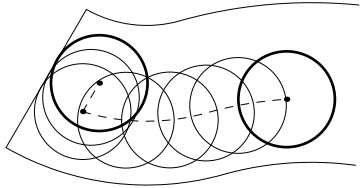}
  \caption{The ball-to-ball property.}
  \label{fig:ball2ball}
\end{figure}

\begin{lemma}[Ball-to-ball property]\label{lemma1.2}
If $B_r(x_{0})$ and $B_r(x_{1})$ are two balls of radius $r\leq 1$, and both are contained in a strip $\strip$, then there exists a piece-wise $C^{1,1}$ curve $\beta:[0,1]\to \strip$ such that $\beta(0) = x_{0}$, $\beta(1) = x_{1}$, the curvature of $\beta$ is smaller than $r^{-1}$, and $B_{r}(\beta(t))\subset \strip$ for all $t\in (0,1)$.
\end{lemma}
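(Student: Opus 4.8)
The plan is to pass to the arc-length coordinates $(t,u)\in(0,L)\times(-1,1)$ via the $C^{1,1}$ diffeomorphism $\Psi(t,u)=\g(t)+u\,\nu(t)$, writing $x_{0}=\Psi(t_{0},u_{0})$, $x_{1}=\Psi(t_{1},u_{1})$ with $t_{0}\le t_{1}$, and to move the ball by elementary pieces whose curvature is controlled by $|\curv_\g|<1$ together with $r\le1$. The first step is a containment criterion: for $r\le1$ and $x=\Psi(t,u)\in\strip$,
\[
B_{r}(x)\subset\strip\iff|u|\le1-r,\ \dist(x,\sigma(0))\ge r,\ \dist(x,\sigma(L))\ge r .
\]
The key point for the nontrivial implication is that $\de_{u}\Psi=\nu$ is a unit vector orthogonal to $\de_{t}\Psi=(1-u\curv_\g)\gp$, so the coordinate function $u\circ\Psi^{-1}$ is $1$-Lipschitz on $\strip$; hence on the connected component of $B_{r}(x)\cap\strip$ containing $x$ one has $|u\circ\Psi^{-1}|\le|u|+r\le1$, which prevents that component from touching the lateral curves $C_{\pm}=\Psi([0,L]\times\{\pm1\})$ inside the ball, and since $\de\strip=C_{+}\cup C_{-}\cup\sigma(0)\cup\sigma(L)$ and $B_{r}(x)$ avoids $\sigma(0),\sigma(L)$, that component has no boundary inside $B_{r}(x)$ and must be all of $B_{r}(x)$. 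The reverse implication is immediate.

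Granting this, the two admissible motions are the straight slide $u\mapsto\Psi(t,u)$ along a leaf $\sigma(t)$ (curvature $0$) and the parallel curve $\delta_{c}(t)=\Psi(t,c)$ at a fixed height $|c|\le1-r$, which by the parallel-curve formula has curvature $|\curv_\g|/|1-c\,\curv_\g|\le|\curv_\g|/(1-(1-r)|\curv_\g|)<r^{-1}$, using $|c|\le1-r$ and $|\curv_\g|<1$. Both are $C^{1,1}$, so any finite concatenation of them is piecewise $C^{1,1}$ with curvature $<r^{-1}$ after reparametrising on $[0,1]$. The model construction of $\beta$ is then: lift $B_{r}(x_{0})$ along $\sigma(t_{0})$ to the spine ($c=0$), transport it along $\g$ from $t_{0}$ to $t_{1}$, and lower it along $\sigma(t_{1})$ onto $x_{1}$. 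By the criterion this keeps the ball in $\strip$ as long as, all along the path, $|u|\le1-r$ (automatic by construction) and the distance to each of $\sigma(0),\sigma(L)$ stays $\ge r$.

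The genuinely delicate part is guaranteeing the two end conditions. When a ball sits at a parameter close to $0$ (hence, by the criterion, already well inside that leaf), one first \emph{translates} it along its level curve $\delta_{u_{i}}$ away from $\sigma(0)$ up to a safe parameter (of order $\tfrac\pi2$), using that $\dist(\,\cdot\,,\sigma(0))$ increases when the centre moves in the increasing-$t$ direction before $\g$ has turned by $\tfrac\pi2$, a fact that rests on the no-crossing assumption; only then does one lift and transport, and symmetrically near $\sigma(L)$. If the strip curls back so that some leaf with $t$ bounded away from $0$ returns close to $\sigma(0)$, the no-crossing condition still controls how close the transported ball can come, and there one transports along the parallel curve on the \emph{convex} side of $\g$ (switching sides, when necessary, through an inflection region of $\g$, where both choices have small curvature) so as to keep the ball as far as possible from the returning end segment.

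I expect this last point to be the main obstacle: showing that the admissible set $\{x:B_{r}(x)\subset\strip\}$ is connected by curvature-constrained, piecewise-$C^{1,1}$ paths \emph{including} in the degenerate regimes where $\strip$ is thin or nearly closes up, i.e.\ controlling the interaction of the moving ball with the two flat ends $\sigma(0),\sigma(L)$ where the clean parallel-curve picture degenerates. By contrast, the containment criterion, the curvature bound and the regularity are either short computations with $\Psi$ or direct consequences of $|\curv_\g|<1$.
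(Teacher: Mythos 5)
Your proposal is a plan rather than a proof, and its foundation --- the ``containment criterion'' $B_r(x)\subset\strip \iff |u|\le 1-r,\ \dist(x,\sigma(0))\ge r,\ \dist(x,\sigma(L))\ge r$ --- is exactly where the real difficulty of Lemma \ref{lemma1.2} lies, and your one-line justification of it does not work. The function $u\circ\Psi^{-1}$ is $1$-Lipschitz only with respect to the intrinsic (path) distance inside $\strip$, so on a connected component of $B_r(x)\cap\strip$ you cannot conclude $|u|\le |u(x)|+r$: a point of that component may be joinable to $x$ only by paths much longer than $r$, and the straight segment to $x$ may leave $\strip$. Even granting the estimate, it only gives $|u|\le 1$, which does not prevent the component from reaching the lateral curves $C_{\pm}=\Psi([0,L]\times\{\pm 1\})$. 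Most seriously, the configuration you must exclude is that a piece of $\de\strip$ coming from a \emph{distant} parameter $t'$ intrudes into $B_r(x)$: the no-crossing condition concerns only the open segments $\sigma(t)$, so the strip may curl back and two of its portions may come arbitrarily close (or even have touching lateral boundaries, producing an internal slit not belonging to $\strip$); near such a boundary point the neighbouring strip points have \emph{small} $|u|$, so your $u$-estimate gives no contradiction whatsoever. This ``weird behaviour of the boundary'' is precisely what the paper singles out as the hard, topological content of Lemmas \ref{lemma1.1} and \ref{lemma1.2}: the authors stress that the $(t,u)$ coordinates bring no real simplification (preimages of balls are not balls) and that one must rule out, by contradiction, configurations such as a small interior ball tangent to two points of $\de^{+}\strip$, or two spine-centered balls of radius $r$ both tangent to $\sigma(0)$. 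Your sketch assumes this away.

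The second half of the proposal inherits the same problem. The ``lift to the spine, transport along $\g$, lower onto $x_1$'' motion and the curvature computation for the parallel curves are fine wherever the criterion applies, but the genuinely delicate points --- keeping $\dist(\cdot,\sigma(0))$ and $\dist(\cdot,\sigma(L))$ at least $r$ when the strip is short, when a ball starts close to an end, or when the strip curls back towards an end segment --- are handled only by unproved assertions (``$\dist(\cdot,\sigma(0))$ increases before $\g$ has turned by $\pi/2$'', ``the no-crossing condition still controls how close the transported ball can come'', ``transport on the convex side''), and you yourself flag this as the main obstacle. As it stands, the argument reduces the lemma to claims that are at least as hard as the lemma itself, so it cannot be counted as a proof; to complete it you would have to supply precisely the boundary-exclusion arguments developed in \cite{LeoPra2014}.
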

The main difficulties in proving Lemmas \ref{lemma1.1} and \ref{lemma1.2} are of topological type. Roughly speaking, one has to exploit the structural properties of the strip in order to exclude some weird behaviour of its boundary. As it happens for many intuitively clear statements concerning planar objects, proving such lemmas is not as easy as one could imagine at a first sight. For instance, we found no particular simplifications in those proofs by working in the $(t,u)$ coordinate system: this can be understood if one considers that the pre-image of a ball with respect to the map $\Psi$ is no more a ball in the $(t,u)$ coordinates. In several steps of the proofs we find it convenient to argue by contradiction, since a number of (a-posteriori impossible) situations, like for instance the one where an internal ball of radius $\e<r$ is tangent to more than one point of $\de^{+}\strip$, or the other where two distinct balls of radius $r$ centered on the spinal curve $\g$ are both tangent to $\sigma(0)$, must be excluded.

With these two lemmas at hand, we can prove Theorem \ref{teo:unionedipalle2}. Hereafter we provide only a sketch of its proof.
\begin{proof}[Proof sketch of Theorem \ref{teo:unionedipalle2}]
First of all, we show that there exist exactly four balls of radius $r$ and centers $x_{0,0},x_{1,0},x_{0,1},x_{1,1}$, such that the boundary of $B_{r}(x_{i,j})$ contains the connected component of $\de E\cap \strip$ that is tangent to
\begin{itemize}
\item[] $\sigma(0)$ and $\de^{-}\strip$ if $i=j=0$;
\item[] $\sigma(0)$ and $\de^{+}\strip$ if $i=0$ and $j=1$;
\item[] $\sigma(L)$ and $\de^{-}\strip$ if $i=1$ and $j=0$;
\item[] $\sigma(L)$ and $\de^{+}\strip$ if $i=j=1$.
\end{itemize}
This can be accomplished by combining Lemma \ref{lemma1.1}, Lemma \ref{lemma1.2}, and Theorem \ref{teo:movingball}. With this result in force, we are able to define the two functions 
$\rho^{\pm}$ satisfying \eqref{intergrafico}, which completely characterize the boundary of $E$. The simply connectedness of $E$ is immediate, as its homeomorphic representation in coordinates $(t,u)$ clearly satisfies this property. On the other hand, we can prove that the union $U_{r}$ of all balls of radius $r$ that are contained in $\strip$ admits in the $(t,u)$ coordinate system the same representation as $E$:
\[
U_{r} = \Psi\left(\{(t,u):\ 0< t< L,\ \rho^{-}(t)<u<\rho^{+}(t)\}\right)\,,
\] 
hence $E=U_{r}$. Finally, the properties concerning the inner Cheeger set $E_{r}$ can be proved as follows. For $i,j=0,1$ we denote by $a_{i,j}$ the first coordinate of $x_{i,j}$ in the $(t,u)$ representation, then set
\begin{itemize}
\item[] $p_{i,j}=$ the orthogonal projection of $x_{i,j}$ onto $\sigma(iL)$; 
\item[] $Q_{i}=$ the rectangle of vertices $x_{i,1},x_{i,0},p_{i,1},p_{i,0}$;
\item[] $D_{i,j}=$ the circular sector with center $x_{i,j}$ and boundary arc $S_{i,j}$;
\item[] $R_{+}=$ the region spanned by $\sigma(t;(1-r,1))$ as $t\in [a_{0,1},a_{1,1}]$;
\item[] $R_{-}=$ the region spanned by $\sigma(t;(-1,r-1))$ as $t\in [a_{0,0},a_{1,0}]$.
\end{itemize}
In the above definitions, $\sigma(t;A)$ denotes the set $\{\g(t) + u\,\nu(t):\ u\in A\}$. Consequently we have the decomposition
\[
E\setminus E_{r} = Q_{0}\cup Q_{1} \cup \bigcup_{i,j=0}^{1} D_{i,j}\cup R_{+}\cup R_{-}\,.
\]
Finally, we show that $E_{r}$ has a Lipschitz boundary and that any points of $E\setminus E_{r}$ has a unique projection onto $E_{r}$ (which can be more precisely identified according to the above decomposition, see Figure \ref{fig:stripdecomp}). We can thus apply Steiner's formulae \eqref{Asteiner} and \eqref{Psteiner}, as in the proof of Theorem \ref{teo:kawlac}, and obtain the inner Cheeger formula \eqref{ICformula}, thus concluding the proof of the theorem.
\end{proof}
\begin{figure}[ht]
  \centering
  \includegraphics[scale=0.9]{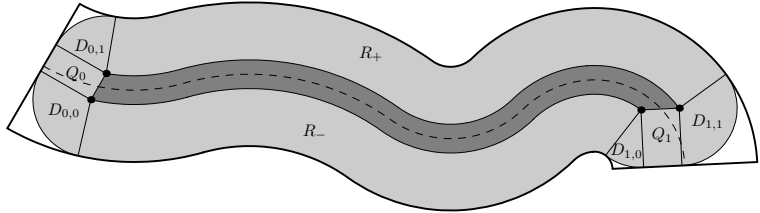}
  \caption{The decomposition of the Cheeger set. The inner Cheeger set $E_{r}$ is colored in dark grey. One can see the eight regions of the decomposition of $E\setminus E_{r}$ colored in light grey.}
  \label{fig:stripdecomp}
\end{figure}

\medskip

\section{Some planar examples}
We conclude by collecting some examples of non-convex planar domains, together with their Cheeger sets.

We start from an example of a domain $G$, whose Cheeger set is strictly contained in the union of balls of radius $r = h(G)^{-1}$ that are contained in $G$. 
\begin{example}[\cite{KawLac2006}]\label{ex:cheegerinunion}\rm 
Let $G$ be the union of two disjoint balls $B_{1}$ and $B_{\frac 23}$, of radii $1$ and $\frac 23$ respectively (see Figure \ref{fig:cheegerinunion}). One has $\frac{P(G)}{|G|} = \frac{30}{13} >2$. It is not difficult to check that the Cheeger set $E$ of $G$ coincides with $B_{1}$, hence $h(G) = 2$. However, $G$ coincides with the union of all balls of radius $r = h(G)^{-1} = \frac 12$ contained in $G$, which is therefore strictly larger than $E$. 
\begin{figure}[ht]
  \centering
  \includegraphics[scale=1]{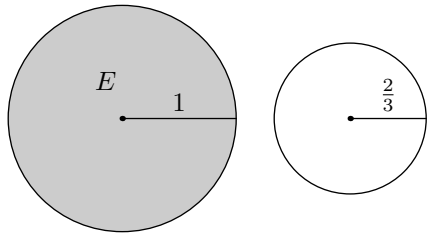}
  \caption{A union of two disjoint balls $B_{1}$ and $B_{\frac 23}$, whose Cheeger set $E$ coincides with the largest ball $B_{1}$.}
  \label{fig:cheegerinunion}
\end{figure}
\end{example}

The next example shows a Cheeger set $\bowtie$ \textit{strictly containing} the union of all balls of radius $h(\bowtie)^{-1}$ contained in $\bowtie$. This example and the one depicted in Figure \ref{fig:cheegerinunion} show that, in general, no inclusion holds between a Cheeger set of $\Om$ and the union of all balls of radius $r = h(\Om)^{-1}$ contained in $\Om$.
\begin{example}[\cite{LeoPra2014}]\label{bow-tie}\rm
Let us consider a unit-side equilateral triangle $T$, as in Figure \ref{fig:bowtie}, together with its Cheeger set $E_{T}$ (depicted in grey). Then, cut $T$ with the vertical line tangent to $E$ and reflect the portion on the left to the right, as shown in the picture. This produces a bow-tie $\bowtie$.
\begin{figure}[ht]
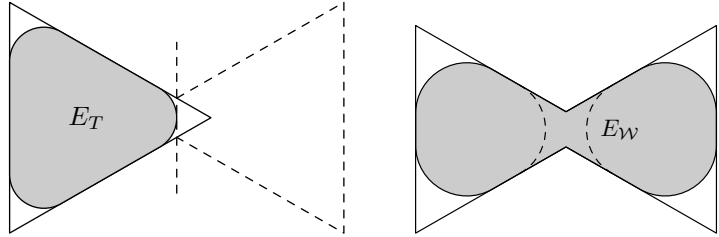

  \centering
  \includegraphics[scale=1]{metap_all-3}
  \qquad
  \includegraphics[scale=0.9]{metap_all-4}
  \caption{The construction of the bow-tie $\mathcal W$ (left) and the Cheeger set $E_{\mathcal W}$ in the bow-tie (right). Notice that the region between the two dashed lines in the picture on the right is the difference between the Cheeger set $E_{\mathcal W}$ and the (strictly smaller) union of all balls of radius $r$ included in $\mathcal W$.}
  \label{fig:bowtie}
\end{figure}
Let now $E_{\bowtie}$ be a Cheeger set inside $\bowtie$. By the $2$-symmetry of $\bowtie$ one can infer the $2$-symmetry of $E_{\bowtie}$. On the other hand, $E_{\bowtie}$ cannot have a connected component $F$ completely contained in $T$, since otherwise $F$ would be Cheeger inside $\bowtie$ and, at the same time, it would coincide with $E_{T}$. But then $E_{T}\cup E_{T}'$ (where we have denoted by $E_{T}'$ the reflected copy of $E_{T}$ with respect to the cutting line) would be Cheeger in $\bowtie$, which is not possible since $\de (E_{T}\cup E_{T}')\cap \bowtie$ is not everywhere smooth, as it should according to Proposition \ref{prop:CheegerGenProp}. Being necessarily $\de E_{\bowtie}\cap \bowtie$ equal to a finite union of circular arcs with the same curvature $=h(\bowtie)$, it is not difficult to rule out all possibilities except the one in which $\de E_{\bowtie} \cap \bowtie$ is composed by four congruent arcs, one for each convex corner in the boundary of $\bowtie$. Moreover one has the strict inequality $h(\bowtie)<h(T)$, therefore the union of all balls of radius $h(\bowtie)^{-1}$ contained in $\bowtie$ does not contain $E_{\bowtie}$ (indeed, some small region around the two concave corners cannot be covered by those balls).  
\end{example}

The next example is obtained as a slight variation of Example \ref{bow-tie}. In this case, the resulting Cheeger set is simply connected, while the inner Cheeger set is disconnected. As a result, we derive the impossibility for the inner Cheeger formula \eqref{ICformula} to hold.
\begin{example}[\cite{LeoPra2014}]\label{loose bow-tie}\rm
Take the bow-tie $\bowtie$ constructed in the previous example and vertically move the two concave corners a bit far apart. By the continuity of the Cheeger constant (see \eqref{hc}) we infer the existence of some minimal displacement of the two corners, such that the Cheeger set in the modified bow-tie $\widetilde\bowtie$ actually coincides with the union of all balls of radius $r = h(\widetilde \bowtie)^{-1}$. This corresponds to the situation represented in Figure \ref{fig:loosebowtie}.
\begin{figure}[ht]
  \centering
  \includegraphics[scale=1]{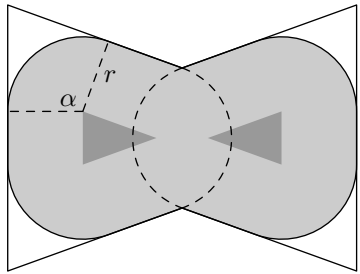}
  \caption{A loose bow-tie for which the inner Cheeger formula does not hold.}
  \label{fig:loosebowtie}
\end{figure}
It is then easy to check that the formula $|E_{r}| = \pi r^{2}$ does not hold in this case, essentially because the inner Cheeger set $E_{r}$ (depicted in dark grey) does not satisfy $\reach(E_{r}) \geq r$. We also notice that, while the Cheeger set $E$ is connected, the inner Cheeger set $E_{r}$ is disconnected. Finally, one can easily check that the true formula, that is satisfied by the inner Cheeger set in this case, is
\[
|E_{r}| = \big(2\alpha + \sin(2\alpha)\big) r^{2}>\pi r^{2}\,,
\]
where $\alpha$ is the angle depicted in Figure \ref{fig:loosebowtie}.
\end{example}

Before getting to the last examples, we recall a result of generic uniqueness for the Cheeger set inside a domain $\Om\subset \R^{n}$, proved in \cite{CasChaNov2010}:
\begin{teo}[\cite{CasChaNov2010}]\label{teo:genericunico}
Let $\Om\subset \R^{n}$ be any bounded open set, and let $\e>0$ be fixed. Then there exists an open set $\Om_{\e}\subset \Om$, such that $|\Om\setminus \Om_{\e}|<\e$ and the Cheeger set of $\Om_{\e}$ is unique.
\end{teo}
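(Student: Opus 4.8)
The plan is to cut finitely many tiny balls out of $\Om$, each cut removing one source of non-uniqueness while leaving the Cheeger constant unchanged; the bookkeeping is driven by the lattice structure of the family $\mathcal C(\Om)$ of Cheeger sets of $\Om$.

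First I would set up the structure of $\mathcal C(\Om)$. By Proposition~\ref{prop:CheegerGenProp}(v) every $E\in\mathcal C(\Om)$ has $|E|\ge v_{0}:=\om_{n}(n/h(\Om))^{n}>0$, and by (vi) $\mathcal C(\Om)$ is stable under finite unions and intersections, so there is a unique maximal Cheeger set $C(\Om)$; moreover two distinct minimal Cheeger sets must be disjoint (a nonempty intersection would be a strictly smaller Cheeger set), hence, being pairwise disjoint subsets of the bounded set $\Om$ of volume $\ge v_{0}$, there are only finitely many of them, say $m_{1},\dots,m_{k}$. A compactness argument based on Proposition~\ref{prop:semicomp} (take a Cheeger set of minimal volume among those contained in a given $E\in\mathcal C(\Om)$) shows that every Cheeger set contains some $m_{i}$, and by (vi) that if a Cheeger set meets $m_{i}$ then it contains $m_{i}$.

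Next I would run the surgery. Assume $k\ge2$ and fix a closed ball $\overline B$ contained in the interior of $m_{k}$, with $|\overline B|$ as small as we like; since $m_{1}\subseteq\Om':=\Om\setminus\overline B\subseteq\Om$, monotonicity (Proposition~\ref{prop:CheegerGenProp}(i)) together with the competitor $m_{1}$ gives $h(\Om')=h(\Om)$. Hence the Cheeger sets of $\Om'$ are exactly the Cheeger sets of $\Om$ avoiding $\overline B$, and its minimal Cheeger sets are precisely $m_{1},\dots,m_{k-1}$: a minimal Cheeger set of $\Om'$ is a Cheeger set of $\Om$, so it contains some $m_{i}$, which is contained in $\Om'$ only for $i\le k-1$, and then equals it by minimality, while $m_{k}\supseteq\overline B$ is destroyed. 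Iterating $k-1$ times, with the radii small enough that the total removed volume is $<\e$, yields an open $\Om''\subseteq\Om$ with $|\Om\setminus\Om''|<\e$, $h(\Om'')=h(\Om)$, and a single minimal Cheeger set $m$. If $m$ is the unique Cheeger set of $\Om''$ we are done; otherwise every Cheeger set of $\Om''$ contains $m$ and is contained in $C(\Om'')\supsetneq m$, and I would repeat the same move: cutting a tiny ball out of $G\setminus m$, where $G$ is a Cheeger set that properly contains $m$ and is inclusion-minimal with that property, leaves $h$ unchanged (the ball lies in a Cheeger set but misses $m$), keeps $m$ as the only minimal Cheeger set, and destroys at least $G$, hence strictly decreases the number of Cheeger sets; after finitely many such steps $m$ is the only one left, and taking all radii small enough produces the desired $\Om_{\e}$.

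The hard part is exactly the input needed for that last step: that $\mathcal C(\Om)$ is finite --- in particular that above a minimal Cheeger set there is no infinite chain of nested Cheeger sets, and no sequence of Cheeger sets accumulating at $m$, which no finite surgery could cope with. I would try to derive this from the regularity in Theorem~\ref{teo:regolarita}: a Cheeger set is a relative perimeter minimizer, so $\de^{*}E\cap\Om$ is an analytic constant mean curvature hypersurface of curvature $h(\Om)/(n-1)$ satisfying uniform density estimates, hence consisting of finitely many components of size bounded from below; combining this with the tangency at $\de^{*}\Om$ from Proposition~\ref{prop:CheegerGenProp}(vii) and a compactness/rigidity argument (a limit of Cheeger sets is again a Cheeger set, and two $L^{1}$-close Cheeger sets with the same mean curvature and the same contact behaviour at $\de\Om$ ought to coincide) one expects that $\Om$ has only finitely many Cheeger sets. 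Everything else reduces to routine bookkeeping with the lattice property~(vi) and the volume bound~(v).
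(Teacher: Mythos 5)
Your first stage (finitely many minimal Cheeger sets, pairwise disjoint by Proposition~\ref{prop:CheegerGenProp}(vi) and of volume $\geq \om_{n}(n/h(\Om))^{n}$ by (v), then killing all but one by removing a tiny ball from each) is sound. The proposal breaks down exactly where you flag it: the second stage needs the family of Cheeger sets of $\Om$ (or at least of the chain above the surviving minimal set $m$) to be finite, and this is simply false in general. Example~\ref{ex:pinocchio} in this paper exhibits a bounded domain $\pino_{\tau}$ with a one-parameter \emph{nested continuum} of Cheeger sets $\pino_{t}$, $0\le t\le\tau$, all with the same ratio, accumulating in $L^{1}$ at the minimal one $\pino_{0}$. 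For such a domain no finite ball-removal surgery can work: any finite collection of removed balls in $\pino_{\tau}\setminus\pino_{0}$ is avoided by infinitely many members of the family (all $\pino_{t}$ with nose shorter than the first removed ball), so infinitely many Cheeger sets survive every step; moreover there is no inclusion-minimal Cheeger set $G\supsetneq m$ to cut into, so your induction does not even start. The same example refutes the rigidity statement you hope to extract from Theorem~\ref{teo:regolarita} and Proposition~\ref{prop:CheegerGenProp}(vii): two $L^{1}$-close Cheeger sets with the same constant mean curvature and tangential contact at $\de\Om$ need not coincide. So the missing ingredient is not a technical lemma you could still prove; it is a false claim, and the strategy of finitely many local excisions cannot be repaired.

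The paper's construction is genuinely different and is designed precisely to cope with a continuum of Cheeger sets: take a \emph{minimal} Cheeger set $E$ of $\Om$ and a smooth open set $\om_{\e}\compact\Om$ with $|\Om\setminus\om_{\e}|<\e$, and set $\Om_{\e}=E\cup\om_{\e}$. Since $E\subset\Om_{\e}\subset\Om$ one gets $h(\Om_{\e})=h(\Om)$, every Cheeger set of $\Om_{\e}$ is a Cheeger set of $\Om$, and the shrinking of $\Om$ to $\om_{\e}$ away from $E$ eliminates all competitors at once (in the example above, every $\pino_{t}$ with $t>0$ needs the full width of the nose and no longer fits); uniqueness of $E$ in $\Om_{\e}$ is then obtained from the strong maximum principle for constant mean curvature hypersurfaces, comparing $\de F\cap\Om_{\e}$ of a putative second Cheeger set $F$ with the pieces of $\de E$ contained in $\de\Om_{\e}$. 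In short: instead of finitely many local cuts, one performs a single global modification that leaves only one of the (possibly uncountably many) Cheeger sets inside the new domain.
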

\begin{proof}[Idea of proof]
Let $E$ be a minimal Cheeger set of $\Om$, and let $\om_{\e}$ be a relatively compact, open subset of $\Om$ with smooth boundary, such that $|\Om\setminus \om_{\e}|<\e$. Define $\Om_{\e} = E \cup \om_{\e}$, then by an application of the strong maximum principle for constant mean curvature hypersurfaces one can show that $E$ is the unique Cheeger set of $\Om_{\e}$.
\end{proof}
\begin{example}[\cite{KawLac2006}]\label{ex:duecheeger}\rm 
Figure \ref{fig:duecheeger} shows a simply connected domain consisting of two congruent squares connected by a small strip. Each square with suitably rounded corners is a minimal Cheeger set, while their union is the maximal Cheeger set of the domain.  
\begin{figure}[ht]
  \centering
  \includegraphics[scale=1.4]{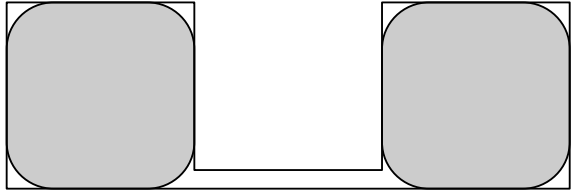}
  \caption{A simply connected domain whose Cheeger set is not unique.}
  \label{fig:duecheeger}
\end{figure}
\end{example}
A more sophisticated example of non-uniqueness is constructed below. Indeed one may ask whether it is possible to find a domain admitting infinitely many distinct Cheeger sets. The answer to this question is in the affirmative, as shown by the following example (we point out that a similar example was numerically discussed by E. Parini in his master degree thesis \cite{Parini_tesilaurea2006}).
\begin{example}[\cite{LeoPra2014}]\label{ex:pinocchio}
\rm 
Let $\pino_{\theta}$ be the union of a unit disc $B_{1}$ centered at $(0,0)$ and a disc of radius $r=\sin \theta$ and center $(\cos \theta,0)$, where $\theta\in(0,\pi/2)$ will be chosen later. The perimeter of $\pino_{\theta}$ is 
\[
P(\theta) = 2(\pi -\theta) + \pi \sin\theta, 
\]
while its area is
\[
A(\theta) = (\pi - \theta) + \sin\theta\, \cos\theta +\frac{\pi\sin^{2}\theta}{2}\,.
\]
\begin{figure}[ht]
  \centering
  \includegraphics[scale=1]{metap_all-9}
  \caption{The set $\pino(\theta)$.}
  \label{fig:pinocchio1}
\end{figure}
Then one shows the existence and uniqueness of $\theta_{0}\in (0,\pi/2)$ such that
\[
\frac{P(\theta_{0})}{A(\theta_{0})} = \frac{1}{\sin \theta_{0}}\,,
\]
that is,
\begin{equation}\label{thetavincolo}
2(\pi-\theta_{0})\sin\theta_{0} +\frac{\pi}{2}\sin^{2}\theta_{0} - (\pi - \theta_{0}) - \frac{\sin(2\theta_{0})}{2} = 0\,.
\end{equation}
Now we set for brevity $\pino_{0} = \pino_{\theta_{0}}$ and observe that the ratio $\frac{P(\theta_{0})}{A(\theta_{0})}$ equals the inverse of the radius of the smaller arc inside $\de\pino_{0}$. Then by a direct comparison with other possible competitors one infers that $\pino_{0}$ is Cheeger in itself. Now we consider the one parameter family $\pino_{t}$, $t\in [0,+\infty)$ of sets obtained by ``elongating the nose'' of $\pino_{0}$ (see Figure \ref{fig:pinocchio2}). It turns out that $\pino_{t}$ is Cheeger in $\pino_{\tau}$ whenever $t\leq \tau$, and this property is stable if one even ``bends the nose'' of $\pino_{t}$. Indeed, the Cheeger ratio of $\pino_{t}$ is constantly equal to $\frac{1}{\sin \theta_{0}}$.
\begin{figure}[ht]
  \centering
  \includegraphics[scale=1]{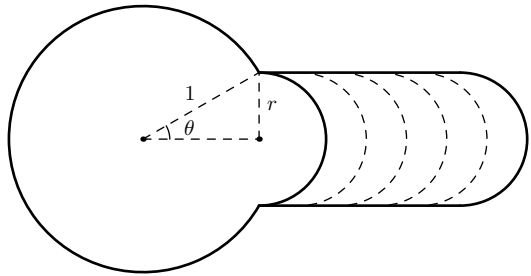}
  \caption{The one-parameter family of Cheeger sets.}
  \label{fig:pinocchio2}
\end{figure}
\end{example}
\medskip


\section*{Acknowledgements}
We thank the Department Mathematik -- Universit\"at Erlangen-N\"urnberg, and in particular Aldo Pratelli, for kind hospitality and financial support through ERC Starting Grant 2010 ``AnOptSetCon''. We also thank Carlo Nitsch and Antoine Henrot for making us aware of Enea Parini's work.


\begin{thebibliography}{10}

\bibitem{AltCas2009}
F.~Alter and V.~Caselles.
\newblock Uniqueness of the {C}heeger set of a convex body.
\newblock {\em Nonlinear Anal.}, 70(1):32--44, 2009.

\bibitem{AltCasCha2005.1}
F.~Alter, V.~Caselles, and A.~Chambolle.
\newblock {A characterization of convex calibrable sets in $\Bbb R^N$.}
\newblock {\em Math. Ann.}, 332(2):329--366, 2005.

\bibitem{AltCasCha2005}
F.~Alter, V.~Caselles, and A.~Chambolle.
\newblock {Evolution of characteristic functions of convex sets in the plane by
  the minimizing total variation flow.}
\newblock {\em Interfaces Free Bound.}, 7(1):29--53, 2005.

\bibitem{AmbFusPal2000}
L.~Ambrosio, N.~Fusco, and D.~Pallara.
\newblock {\em Functions of bounded variation and free discontinuity problems},
  volume 254.
\newblock Clarendon Press Oxford, 2000.

\bibitem{ButCarCom2007}
G.~Buttazzo, G.~Carlier, and M.~Comte.
\newblock On the selection of maximal {C}heeger sets.
\newblock {\em Differential Integral Equations}, 20(9):991--1004, 2007.

\bibitem{CarComPey2009}
G.~Carlier, M.~Comte, and G.~Peyr{\'e}.
\newblock Approximation of maximal {C}heeger sets by projection.
\newblock {\em M2AN Math. Model. Numer. Anal.}, 43(1):139--150, 2009.

\bibitem{CasChaMolNov2008}
V.~Caselles, A.~Chambolle, S.~Moll, and M.~Novaga.
\newblock A characterization of convex calibrable sets in {$\Bbb R^N$} with
  respect to anisotropic norms.
\newblock {\em Ann. Inst. H. Poincar\'e Anal. Non Lin\'eaire}, 25(4):803--832,
  2008.

\bibitem{CasChaNov2007}
V.~Caselles, A.~Chambolle, and M.~Novaga.
\newblock Uniqueness of the {C}heeger set of a convex body.
\newblock {\em Pacific J. Math.}, 232(1):77--90, 2007.

\bibitem{CasChaNov2010}
V.~Caselles, A.~Chambolle, and M.~Novaga.
\newblock {Some remarks on uniqueness and regularity of Cheeger sets.}
\newblock {\em Rend. Semin. Mat. Univ. Padova}, 123:191--201, 2010.

\bibitem{CasFacMei2009}
V.~Caselles, G.~Facciolo, and E.~Meinhardt.
\newblock {Anisotropic Cheeger sets and applications.}
\newblock {\em SIAM J. Imaging Sci.}, 2(4):1211--1254, 2009.

\bibitem{CasMirNov2010}
V.~Caselles, M.~j. Miranda, and M.~Novaga.
\newblock {Total variation and Cheeger sets in Gauss space.}
\newblock {\em J. Funct. Anal.}, 259(6):1491--1516, 2010.

\bibitem{ChaLio1997}
A.~Chambolle and P.-L. Lions.
\newblock Image recovery via total variation minimization and related problems.
\newblock {\em Numer. Math.}, 76(2):167--188, 1997.

\bibitem{Cheeger1970}
J.~Cheeger.
\newblock A lower bound for the smallest eigenvalue of the {L}aplacian.
\newblock {I}n problems in analysis (papers dedicated to {S}olomon {B}ochner,
  1969, 195-199). Princeton Univ. Press, 1970.

\bibitem{DeGiorgi2006}
E.~De~Giorgi.
\newblock {\em Selected papers}.
\newblock Springer-Verlag, Berlin, 2006.

\bibitem{DucExn1995}
P.~Duclos and P.~Exner.
\newblock Curvature-induced bound states in quantum waveguides in two and three
  dimensions.
\newblock {\em Rev. Math. Phys.}, 7:73--102, 1995.

\bibitem{Federer1959}
H.~Federer.
\newblock Curvature measures.
\newblock {\em Trans. Amer. Math. Soc.}, 93:418--491, 1959.

\bibitem{FedererBOOK}
H.~Federer.
\newblock {\em Geometric measure theory}, volume 153 of {\em Die Grundlehren
  der mathematischen Wissenschaften}.
\newblock Springer-Verlag New York Inc., New York, 1969.

\bibitem{Giaquinta1974}
M.~Giaquinta.
\newblock On the {D}irichlet problem for surfaces of prescribed mean curvature.
\newblock {\em Manuscripta Math.}, 12:73--86, 1974.

\bibitem{Giusti1978}
E.~Giusti.
\newblock On the equation of surfaces of prescribed mean curvature. {E}xistence
  and uniqueness without boundary conditions.
\newblock {\em Invent. Math.}, 46(2):111--137, 1978.

\bibitem{Grieser2006}
D.~Grieser.
\newblock The first eigenvalue of the {L}aplacian, isoperimetric constants, and
  the max flow min cut theorem.
\newblock {\em Arch. Math.}, 87(1):75--85, 2006.

\bibitem{IonLac2005}
I.~R. Ionescu and T.~Lachand-Robert.
\newblock Generalized cheeger sets related to landslides.
\newblock {\em Calculus of Variations and Partial Differential Equations},
  23(2):227--249, 2005.

\bibitem{KawFri2003}
B.~Kawohl and V.~Fridman.
\newblock Isoperimetric estimates for the first eigenvalue of the
  {$p$}-{L}aplace operator and the {C}heeger constant.
\newblock {\em Comment. Math. Univ. Carolin.}, 44(4):659--667, 2003.

\bibitem{KawLac2006}
B.~Kawohl and T.~Lachand-Robert.
\newblock {Characterization of Cheeger sets for convex subsets of the plane.}
\newblock {\em Pac. J. Math.}, 225(1):103--118, 2006.

\bibitem{KreKri2005}
D.~Krej{\v{c}}i{\v{r}}{\'{\i}}k and J.~K{\v{r}}{\'{\i}}z.
\newblock On the spectrum of curved planar waveguides.
\newblock {\em Publ. Res. Inst. Math. Sci.}, 41(3):757--791, 2005.

\bibitem{KrePra2011}
D.~Krej\v{c}i\v{r}{\'\i}k and A.~Pratelli.
\newblock {The Cheeger constant of curved strips.}
\newblock {\em Pac. J. Math.}, 254(2):309--333, 2011.

\bibitem{LefWei1997}
L.~Lefton and D.~Wei.
\newblock Numerical approximation of the first eigenpair of the
  {$p$}-{L}aplacian using finite elements and the penalty method.
\newblock {\em Numer. Funct. Anal. Optim.}, 18(3-4):389--399, 1997.

\bibitem{LeoPra2014}
G.~P. Leonardi and A.~Pratelli.
\newblock Cheeger sets in non-convex domains.
\newblock preprint ar{X}iv:1409.1376, 2014.

\bibitem{Miranda1964}
M.~Miranda.
\newblock Superfici cartesiane generalizzate ed insiemi di perimetro localmente
  finito sui prodotti cartesiani.
\newblock {\em Ann. Scuola Norm. Sup. Pisa (3)}, 18:515--542, 1964.

\bibitem{Parini_tesilaurea2006}
E.~Parini.
\newblock {\em Cheeger sets in the nonconvex case}.
\newblock Tesi di Laurea Magistrale, Universit{\`a} degli Studi di Milano,
  2006.

\bibitem{Parini_doktorarbeit2009}
E.~Parini.
\newblock {\em Asymptotic behaviour of higher eigenfunctions of the p-Laplacian
  as p goes to 1}.
\newblock PhD thesis, Universit{\"a}t zu K{\"o}ln, 2009.

\bibitem{RudOshFat1992}
L.~I. Rudin, S.~Osher, and E.~Fatemi.
\newblock Nonlinear total variation based noise removal algorithms.
\newblock {\em Physica D: Nonlinear Phenomena}, 60(1):259--268, 1992.

\bibitem{Steiner1840}
J.~Steiner.
\newblock {\"U}ber parallele fl{\"a}chen.
\newblock {\em Monatsber. Preuss. Akad. Wiss}, pages 114--118, 1840.

\bibitem{Strang2010}
G.~Strang.
\newblock Maximum flows and minimum cuts in the plane.
\newblock {\em J. Global Optim.}, 47(3):527--535, 2010.

\bibitem{StrZie1997}
E.~Stredulinsky and W.~P. Ziemer.
\newblock Area minimizing sets subject to a volume constraint in a convex set.
\newblock {\em J. Geom. Anal.}, 7(4):653--677, 1997.

\bibitem{Tamanini1982}
I.~Tamanini.
\newblock {\em Regularity results for almost minimal oriented hypersurfaces in
  {$R^N$}}.
\newblock Quaderni del Dipartimento di Matematica dell'Universit\`a di Lecce,
  1984.
\newblock available for download at http://cvgmt.sns.it/paper/1807/.

\bibitem{Weyl1939}
H.~Weyl.
\newblock On the {V}olume of {T}ubes.
\newblock {\em Amer. J. Math.}, 61(2):461--472, 1939.

\end{thebibliography}

\end{document}